\newtheorem{theorem}{\textsc{Th\'eor\`eme}}[section]
\newtheorem{definition}[theorem]{\textsc{D\'efinition}}
\newtheorem{example}[theorem]{Example}
\newtheorem{notation}[theorem]{\textsc{Notation}}
\newtheorem{proposition}[theorem]{\textsc{Proposition}}
\newtheorem{remark}[theorem]{\textsc{Remarque}}
\newtheorem{theorem-definition}[theorem]{Theorem and Definition}
\numberwithin{equation}{section}
\begin{document}

\title{Structures bihamiltoniennes partielles}

\author{Patrick Cabau \& Fernand Pelletier}
\date{}
\maketitle 

\begin{abstract}
Nous introduisons dans cet article trois notions de structures bihamiltoniennes partielles ($\operatorname{PQ}$, $\operatorname{PN}$ et $\operatorname{P\Omega}$) dans le cadre $c^\infty$-complet au sens de Fr\"{o}licher, Kriegl et Michor (ou cadre adapté), étudions des objets géométriques liés à ces structures et en donnons des exemples à la fois en dimensions finie et infinie. 
\end{abstract}
\medskip
\textbf{Mots clefs~:} 
Structures de Poisson partielles, variétés bistructurées partielles, distributions caractéristiques, structures adaptées, structures $c^\infty$-complètes.\\
\textbf{MSC 2020~:} 53D17, 46T05, 58A30, 18A30.

\tableofcontents
\section*{Remerciements}
Les auteurs remercient le \textit{referee} du \textit{Bulletin des Sciences Mathématiques} pour sa relecture minutieuse et ses remarques très pertinentes.

\section{Introduction}

En Mécanique ou en Physique Mathématique, les structures mises en jeu (variétés, fibrés, algébroïdes de Lie) peuvent être de dimension finie, de type Banach, Hilbert ou Fréchet, ou plus généralement des limites inverses ou directes de telles structures.

\bigskip 
Dans ces domaines, la notion de système hamiltonien apparaît comme fondamentale et est intiment liée à l'existence de structure de Poisson. En 1978, F. Magri introduit dans \cite{Mag78} le concept de structure bihamiltonienne\index{structure!bihamiltonienne} pour les systèmes intégrables (voir aussi le document de référence \cite{MaMo84} pour une étude développée émaillée de nombreux exemples). Du point de vue géométrique, ceci correspond à l'existence de deux structures de Poisson
\index{structure!de Poisson} liées par un opérateur de récursion permettant de générer une suite d'intégrales premières.

Cette structure apparaît bien entendu sur des espaces de configuration de dimension finie, notamment sur des groupes de Lie (correspondant aussi à des
groupes de symétrie).

Il s'avère que ce concept est aussi parfaitement adapté au cadre des équations d'évolution : la célèbre équation de Korteweg-de Vries (KdV)%
\index{KdV@$u_{t}=u_{xxx}+6uu_{x}$ (\'equation de KdV)}
\[
u_{t}=u_{xxx}+6uu_{x}
\]
peut être écrite sous forme bihamiltonienne dans le cadre de divers espaces de configuration définis \textit{via} des limites projectives~:

\begin{description}
\item[$\bullet$] groupe de Virasoro
\index{groupe!de Virasoro}\index{Virasoro!groupe}
$\operatorname{Diff}^{\infty}\left(  \mathbb{S}^{1}\right)  \times\mathbb{R}$
(cf. \cite{KhMi03}) où elle correspond à l'équation d'Euler décrivant le flot géodésique relativement à une métrique $L^{2}$ invariante à droite
\footnote{La notion d'équation d'Euler sur un groupe de Lie $G$ a été introduite
par V. Arnold dans \cite{Arn66}~:
\par
\begin{description}
\item[$\bullet$] 
$G=SO(3)$ pour un solide~;
\item[$\bullet$] 
$G=\operatorname{SDiff}(M)$ groupe des difféomorphismes
préservant le volume pour un fluide idéal remplissant un domaine $M$.
\end{description}
}, ce groupe apparaissant comme limite projective des groupes
$\operatorname{Diff}^{k} \left( \mathbb{S}^{1} \right)  \times \mathbb{R}$ (cf. \cite{Sch04})~;
\item[$\bullet$] 
tour hilbertienne d'espaces de Sobolev $\left(  \mathcal{H}^{n}\right)  _{n\in\mathbb{Z}}$ , où $\mathcal{H}^{n}=H^{n} \left( \mathbb{S}^{1},\mathbb{R} \right)  $ (cf. \cite{KaMa98})~;
\item[$\bullet$] 
algébroïde de Lie variationnelle (cf. \cite{KiVa11}) au-dessus du fibré des jets d'ordre infini de sections d'un fibré de rang fini au dessus d'une variété de dimension finie.
\end{description}

En dimension finie, la notion de structure de Poisson correspond à une variété lisse $M$ dont l'algèbre des fonctions lisses $C^\infty(M)$ est munie d'un crochet $\{.,.\}$, i.e. d'une application bilinéaire antisymétrique $C^\infty(M) \times C^\infty(M) \to C^\infty(M)$ qui satisfait à la fois les identités de Leibniz et de Jacobi. A toute fonction lisse $f$, est associée \textit{via} le crochet une dérivation $g \mapsto \{f,g\}$ qui correspond à un champ de vecteurs $X_f$ appelé champ de vecteurs hamiltonien associé à l'hamiltonien\index{hamiltonien} $f$.\\

Une extension d'une telle structure au contexte des espaces de Banach a été définie dans \cite{OdRa03} où les auteurs considèrent un crochet de Poisson sur l'algèbre $C^\infty(M)$ d'une variété $M$, prédual d'une $W^\ast$ algèbre (cf. \cite{OdRa08a}, \cite{OdRa08b} et \cite{Rat11} pour des développements). Cette notion a aussi été étudiée dans une série de papiers par divers auteurs (voir par exemple \cite{BRT07}, \cite{GoOd09} et \cite{GoTu24}).\\

La notion de structure de Poisson partielle dans le cadre de variétés de Banach a été définie par A.B. Tumpach dans \cite{Tum20}, Definition~3.7, désignée par l'expression 'generalized Poisson manifold' ainsi que dans \cite{BeOd19}, Definition~2.1.1 où elle est simplement nommée 'structure de Poisson'. Dans ces deux cas, le tenseur de Poisson n'est défini que sur un sous-fibré du fibré cotangent, sous-fibré séparant les points de l'espace tangent. \\

L'extension de cette notion de structure de Poisson partielle au cadre localement convexe est due à K.-H. Neeb, H. Sahlmann et T. Thiemann dans le papier \cite{NST14}. Les auteurs considèrent une sous-algèbre $\mathcal{A}$ de $C^\infty(M)$ contenant les fonctions constantes et munie d'un crochet de Poisson. De plus, les différentielles des éléments de $\mathcal{A}$ séparent les vecteurs tangents.  Est aussi supposée l'existence de champs Hamiltoniens lisses. \\

Cette même notion de \emph{structure de Poisson partielle} dans le cadre des espaces adaptés ou $c^\infty$-complets au sens de Fr\"olicher, Kriegl et Michor est introduite dans \cite{PeCa19} où l'on a~:
\begin{description}
\item[$\bullet$]
l'algèbre $\mathfrak{A} (M)$ des fonctions lisses $f$ sur $M$ dont toutes les différentielles d'ordre supérieur $d^k_xf(.,u_2,\dots,u_k)$ ($k$ entier naturel non nul) appartiennent à $T_x^{\flat}M$ pour tout point $x$ de $M$, tout $(k-1)$-uplet $ \left( u_2,\dots, u_k \right) $ de  $T_xM$ où $T^\flat M$ est un sous-fibré faible du fibré cotangent cinématique $T'M$ de la variété $M$~;
\item[$\bullet$]
un morphisme de fibrés $P:T^{\flat}M \to TM$ tel que
\[
\{f,g\}_{P}=dg(P(df))
\]
définit un crochet de Poisson sur $\mathfrak{A} (M)$.
\end{description}
On s'intéresse alors dans cet article aux situations d'une variété adaptée 
\footnote{Ou $c^\infty$-complète.} 
 $M$ munie  
\begin{enumerate}
\item[$\bullet$]
soit de deux structures de Poisson  partielles compatibles $P$ et $Q$ définies sur le même sous-fibré $T^{\flat}M$, i.e. où $P+Q$ vérifie les mêmes conditions que $P$ et $Q$~;
\item[$\bullet$]
soit d'une structure de Poisson partielle $P$ définie sur $T^{\flat}M$ et d'un tenseur de Nijenhuis $N$ compatible avec $P$~;
\item[$\bullet$]
soit d'une structure de Poisson partielle $P$ définie sur $T^{\flat}M$ et une structure symplectique faible $\Omega$ à valeurs dans $T^{\flat}M$, ces deux structures vérifiant la condition de compatibilité $d (\Omega P \Omega ) = 0$.  
\end{enumerate}

\section{Cadre $c^\infty$-complet ou adapté}
\label{_CadreAdapte}

Le calcul différentiel en dimension infinie a été développé par Jacques Bernoulli, Leonhard Euler et Joseph-Louis Lagrange dans le cadre de la recherche de minimisation d'intégrales de fonctionnelles.\\

De nombreuses théories de la différentiation ont été proposées au delà du simple cadre d'espaces de Banach, principalement sur des certains espaces vectoriels topologiques localement convexes (e.v.t.l.c.). Apparaissent alors de nombreux problèmes~:
\begin{itemize}
\item[$\bullet$]
une équation différentielle ordinaire peut ne pas avoir de solutions et tant est que solution existe, on n'est pas assuré de son unicité~;
\item[$\bullet$]
il n'existe pas de théorème d'inversion locale naturel~;
\item[$\bullet$]
il n'existe pas de topologie naturelle sur le dual et, qui plus est, aucune de ces topologies n'est métrisable.
\end{itemize}

Ces diverses théories ont été développées de diverses manières en fonction du type de dérivée requise (cf. \cite{Mich38}, \cite{Bas64}, \cite{Kel74}, \cite{Lesl82}). On obtient alors divers calculs différentiels non équivalents sur des e.v.t.l.c. et par voie de conséquence sur des structures géométriques fondées sur de tels espaces (variétés, fibrés, groupes de Lie, algébroïdes).\\

Un point clé au sein de ces diverses approches est le cadre du calcul différentiel adapté 
\footnote{Ou calcul différentiel convenable selon la terminologie de \cite{Fau11}, eng. \textit{convenient differential calculus}.} 
introduit par A. Fr\"{o}licher et A. Kriegl
 (see \cite{FrKr88}).\\
\medskip
Notre choix s'est alors porté sur ce type de calcul différentiel pour diverses raisons\footnote{On trouvera dans \cite{Schme21}, A.7.6 et A.7.7, une comparaison très intéressante de ces différents types de calcul différentiel.}~:
\begin{itemize}
\item[$\bullet$]
la catégorie des espaces vectoriels adaptés est cartésiennement fermée\footnote{cf. \cite{KrMi97}, Theorem~(3.12).}~:
\[
C^\infty(\mathbb{E} \times \mathbb{F},\mathbb{G}) \cong
C^\infty \left( \mathbb{E},C^\infty(\mathbb{F},\mathbb{G}) \right)
\]
\item[$\bullet$]
ce cadre englobe toutes les situations usuelles rencontrées (dimension finie, Banach, Hilbert et Fr\'echet)~;
\item[$\bullet$]
les limites directes de variétés de dimension finie voire de Hilbert ou de Banach peuvent être munies de structures adaptées ou $c^\infty$-complètes\footnote{Par exemple le groupe $GL(\infty, \mathbb{R})$ et tous ses sous groupes (cf. \cite{KrMi97}, 47) ou encore la limite directe d'espaces classifiants de fibr\'es principaux, (cf. \cite{Mor60}) peuvent être munis d'une structure de variété adaptée.}.   \\ 
\end{itemize}

\medskip
Notons par ailleurs que le cadre localement convexe général n'est pas toujours appropri\'e pour les problèmes de dimension infinie rencontrés en Physique et en mécanique des fluides (cf. \cite{SBZB17} et \cite{YoMa07}). En effet, le Lemme fondamental du calcul des variations qui est essentiel pour définir un lien entre les courbes extrémales d'une fonctionnelle et les équations d'Euler-Lagrange ne peut pas être utilisé dans ce cadre trop général. Ceci est dû au fait que  fonction évaluation $ev:\mathbb{E}\times \mathbb{E}^\ast\to\mathbb{R}$ entre un espace localement convexe $\mathbb{E}$ et son dual continu $\mathbb{E}^\ast$ n'est plus continue.
Par contre, pour la topologie $c^\infty$ d'espace adapté, l'application $ev:\mathbb{E}\times \mathbb{E}' \to\mathbb{R}$ entre un espace adapté $\mathbb{E}$ et  son dual bornologique $\mathbb{E}'$ est continue. On peut alors étendre les arguments usuels utilisés en dimension finie au cadre adapté pour obtenir un lien entre courbes singulières d'une fonctionnelle et solutions de l'équation d'Euler-Lagrange. \\
\smallskip

L'ouvrage de référence est ici le livre \cite{KrMi97}.
\begin{definition}
\label{D_cInfiniTopologie}
La \emph{$c^{\infty}$-topologie}\index{cinftopologie@$c^{\infty}$-topologie} sur un e.v.t.l.c. $\mathbb{E}$ est la topologie finale pour toutes les courbes infiniment différentiables (ou lisses)  $\mathbb{R}\to \mathbb{E}$. L'espace vectoriel $\mathbb{E}$ muni de cette topologie est noté $c^{\infty}\mathbb{E}$.\\
Un ouvert pour cette topologie est appelé \emph{$c^{\infty}$-ouvert}\index{cinfouvert@$c^{\infty}$-ouvert(ouvert pour la $c^{\infty}$-topologie)}.
\end{definition}

Autrement dit, la $c^{\infty}$-topologie est la plus fine topologie de  $\mathbb{E}$ pour laquelle  toutes les courbes infiniment différentiables demeurent continues. En général, la $c^{\infty}$-topologie est plus fine que toute topologie localement convexe ayant les mêmes ensembles bornés. $c^{\infty}\mathbb{E}$ n'est pas en général  un espace vectoriel topologique. Si $\mathbb{E}$ est un espace de Fr\'echet, alors la $c^{\infty}$-topologie coïncide avec la topologie donnée de l'e.v.t.\\
Le fait qu'une courbe soit lisse ne dépend pas de la topologie donnée sur $\mathbb{E}$ puisque toutes les topologies conduisant à un même dual donnent la même famille de courbes lisses. En fait, le fait qu'une courbe soit lisse dépend uniquement de la famille des parties bornées.

L'idée de J. Boman de tester le fait qu'une application soit lisse \textit{via} des courbes lisses de $\mathbb{R}$ vers $\mathbb{R}^n$ (cf. \cite{Bom67}) peut être étendue aux e.v.t.l.c., puisque cette notion \index{infiniment différentiable!courbe}\index{lisse!courbe} dans ce cadre est parfaitement définie.

\begin{definition}
\label{D_ApplicationLisseEntreEVTLC}
Soient $\mathbb{E}$ et $\mathbb{F}$ deux espaces vectoriels topologiques localement convexes. Une application $f:\mathbb{E} \to \mathbb{F}$ est dite \emph{convenablement lisse}\index{courbe!convenablement lisse} si l'image d'une courbe lisse est une courbe lisse, i.e. si $f\circ c\in C^{\infty} \left( \mathbb{R},\mathbb{F} \right)  $ pour toute courbe $c\in C^{\infty}\left ( \mathbb{R},\mathbb{E} \right) $.
\end{definition}

\begin{notation}
\label{N_DuauxAlgebriqueTopologiqueBornologique}
Si $\mathbb{E}$ est un espace vectoriel adapté, on note
\begin{description}
\item
$\mathbb{E}^\sharp$\index{Edual1@$\mathbb{E}^\sharp$ (dual algébrique)}\index{D1@dual!alg\'ebrique} le dual alg\'ebrique~;
\item
$\mathbb{E}^\star$\index{Edual2@$\mathbb{E}^\star$ (dual topologique)}\index{D2@dual!topologique}le dual topologique, i.e. l'espace des formes linéaires continues sur $E$~;
\item
$\mathbb{E}^\prime$\index{Edual3@$\mathbb{E}^\prime$ (dual bornologique)}\index{D3@dual!bornologique} le dual bornologique, i.e. l'espace des formes linéaires bornées sur $E$.
\end{description}
\end{notation}

\begin{definition}
\label{D_CourbeFaiblementLisse}
Soit $\mathbb{E}$ un espace adapté. Une courbe  $c : \mathbb{R} \to \mathbb{E}$ est dite \emph{faiblement lisse}\index{courbe!faiblement lisse} si pour toute forme linéaire continue $l$, $l \circ c$ est lisse~:
\[
\forall l \in E^\star,\; l \circ c \in C^\infty(\mathbb{R},\mathbb{R}).
\]
\end{definition}

On peut noter que le lien entre continuité et infinie différentiabilité en dimension infinie n'est pas aussi ténu qu'en dimension finie~: \textbf{il existe des courbes lisses qui ne sont pas continues pour la topologie donnée  de $\mathbb{E}$~!} Cette situation ne peut être évitée si l'on souhaite que la propriété de dérivation des fonctions composées 
\footnote{Eng. \textit{chain rule}.}
 soit vérifiée~: la fonction évaluation $ev:\mathbb{E}\times \mathbb{E}^{\star }\to\mathbb{R}$ de $\mathbb{E}$ est lisse mais n'est continue que si l'espace $\mathbb{E}$ est normable.

En analyse en dimension finie, la condition de Cauchy est une condition nécessaire de convergence d'une suite pour définir la complétude d'un espace. Dans le cadre adapté, c'est la notion plus faible de suite de Mackey-Cauchy qui est utilisée (cf. \cite{KrMi97},~2)~:
\begin{definition}
\label{D_SuiteDeMackeyCauchy}
Une suite $ \left( x_{n} \right) _{n \in \mathbb{N}}$ de $\mathbb{E}$ est dite de \emph{Mackey-Cauchy}\index{suite!Mackey-Cauchy}\index{Mackey-Cauchy!suite} s'il existe un sous-espace borné absolument convexe $B$ de $\mathbb{E}$ tel que $\left(  x_{n} \right)  $ est une suite de Cauchy dans l'espace vectoriel normé $\mathbb{E}_{B}$
\footnote{$\mathbb{E}_{B}$ est l'espace vectoriel engendré par $B$ dans $\mathbb{E}$ muni de la \emph{fonctionnelle de Minkowski}\index{fonctionnelle de Minkowski} $p_B$ définie pour tout $v$ de $\mathbb{E}$ par~:
\[
p_B(v)=\operatorname{inf} \{ \lambda > 0: v \in \lambda.B \}
\]}~:
\[
\forall \varepsilon > 0, 
\exists n_\varepsilon \in \mathbb{N},
\forall n>m>n_\varepsilon,\; x_m - x_n \in \varepsilon.B.
\]
\end{definition}

La définition suivante est fondamentale.

\begin{definition}
\label{D_EspaceVectorielAdapte}
Un espace vectoriel topologique localement convexe est dit  \emph{$c^{\infty}$-complet} ou \emph{adapté}\index{adapté!espace}\index{espace!adapté}\index{cinftycomplet@$c^{\infty}$-complet}\index{espace!$c^{\infty}$-complet} si toute suite de Mackey-Cauchy converge ($c^{\infty}$-complétude\index{cinftycompletude@$c^{\infty}$-complétude}\index{complétude}).
\end{definition}

La caractérisation suivante d'un espace adapté est tirée de \cite{KrMi97}, Theorem 2.14~:

\begin{proposition}
\label{P_CaracterisationEspaceAdapte}
Un espace vectoriel topologique localement convexe $\mathbb{E}$ est adapté si l'une des conditions équivalentes suivantes est satisfaite~:
\begin{description}
\item[\textbf{(EVA~1)}]
Toute courbe Lipschitzienne de $\mathbb{E}$ est localement Riemann intégrable.\index{courbe!localement Riemann intégrable}
\item[\textbf{(EVA~2)}]
Pour toute courbe $c\in C^{\infty} \left( \mathbb{R},\mathbb{E} \right) $, il existe une courbe $\gamma\in C^{\infty} \left( \mathbb{R},\mathbb{E} \right) $ telle que $\gamma^{\prime}=c$.
\item[\textbf{(EVA~3)}]
Toute courbe faiblement lisse est lisse.
\item[\textbf{(EVA~4)}]
Si $B$ est borné, fermé et absolument convexe, l'espace normé associé $\mathbb{E}_{B}$ est un espace de Banach.
\end{description}
\end{proposition}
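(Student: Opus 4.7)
The plan is to establish a cycle of implications linking (EVA~1)--(EVA~4) to the $c^\infty$-compl\'etude de la D\'efinition~\ref{D_EspaceVectorielAdapte}, in the order $c^\infty$-complet $\Rightarrow$ (EVA~4) $\Rightarrow$ (EVA~1) $\Rightarrow$ (EVA~2) $\Rightarrow$ $c^\infty$-complet, and then to derive (EVA~3) as a separate consequence. The underlying idea is that each condition amounts to some form of completeness tested by a different class of \emph{courbes} (suites de Mackey-Cauchy, courbes lipschitziennes, courbes lisses, courbes faiblement lisses), and the Minkowski normed spaces $\mathbb{E}_{B}$ indexed by les born\'es ferm\'es absolument convexes $B$ provide the common bridge.

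I would begin with the direct implications. For $c^\infty$-complet $\Rightarrow$ (EVA~4): a Cauchy sequence in $\mathbb{E}_{B}$ is Mackey-Cauchy in $\mathbb{E}$ with witness $B$, hence converges in $\mathbb{E}$, and ferm\'eture of $B$ keeps the limit in $\mathbb{E}_{B}$. For (EVA~4) $\Rightarrow$ (EVA~1): the quotients diff\'erentiels d'une courbe lipschitzienne on a compact interval form a born\'e, whose closed absolutely convex hull $B$ yields a Banach space $\mathbb{E}_{B}$ in which les sommes de Riemann converge by the classical argument, hence a fortiori dans $\mathbb{E}$. For (EVA~1) $\Rightarrow$ (EVA~2): a smooth curve is locally lipschitzienne (via in\'egalit\'e des accroissements finis tested against les formes lin\'eaires continues together with local bornitude of $c'$), so the definition $\gamma(t):=\int_{0}^{t}c(s)\,ds$ makes sense by (EVA~1) and $\gamma'=c$ follows from the fundamental theorem of calculus.

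The crucial step is (EVA~2) $\Rightarrow$ $c^\infty$-complet. Given une suite de Mackey-Cauchy $(x_{n})$ absorbed by some $B$, the plan is to construct a \emph{smooth} curve $c:\mathbb{R}\to\mathbb{E}$ whose antid\'eriv\'ee $\gamma$, provided by (EVA~2), satisfies $\gamma(t_{n})=x_{n}$ at a sequence of parameters $t_{n}\nearrow 1$. This interpolation uses smooth bumps on shrinking intervals about the $t_{n}$, with amplitudes calibrated so that $c$ and all its derivatives remain $B$-born\'ees; continuity of $\gamma$ at $1$ then forces Mackey convergence of $(x_{n})$. Finally, for (EVA~3): toute courbe lisse est \'evidemment faiblement lisse, and the converse is obtained by applying the Banach--Steinhaus theorem in each Banach space $\mathbb{E}_{B}$ (available by (EVA~4)) to show that the quotients diff\'erentiels of every order are Mackey-born\'es, then iterating to reconstruct strong derivatives.

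The main obstacle is the smooth interpolation in (EVA~2)$\Rightarrow$$c^\infty$-complet: producing a single smooth $\mathbb{E}$-valued curve encoding arbitrary Mackey-Cauchy data requires a delicate balance between the accumulation rate of the $t_{n}$, the size of the bumps, and the uniform control of all derivatives in the seminorme jauge de $B$. Equally subtle is the proof of (EVA~3), where the passage from weak to strong diff\'erentiabilit\'e in the absence of m\'etrisabilit\'e must be handled purely bornologiquement via Mackey convergence rather than classical Cauchy arguments.
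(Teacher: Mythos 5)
The paper gives no proof of this proposition: it is quoted from \cite{KrMi97}, Theorem~2.14, so your attempt can only be measured against the standard argument there. Your cycle adapt\'e $\Rightarrow$ \textbf{(EVA~4)} $\Rightarrow$ \textbf{(EVA~1)} $\Rightarrow$ \textbf{(EVA~2)} $\Rightarrow$ adapt\'e is essentially the route taken in \cite{KrMi97}, and each of those four arrows is sound in outline (for \textbf{(EVA~1)} $\Rightarrow$ \textbf{(EVA~2)} you should make explicit that local Lipschitzianity of a smooth curve comes from the scalar mean value theorem plus the fact that weakly born\'e implies born\'e; for \textbf{(EVA~2)} $\Rightarrow$ adapt\'e you must first pass to a rapidly convergent subsequence $x_{n+1}-x_{n}\in\lambda_{n}B$ with $\lambda_{n}$ decaying faster than any power of the gap lengths, since what is needed at the accumulation parameter is not that the derivatives of $c$ stay $B$-born\'ees but that they tend to $0$ there at every order -- and then note that a Mackey--Cauchy sequence with a convergent subsequence converges).

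The genuine gap is \textbf{(EVA~3)}. The proposition asserts in particular that \textbf{(EVA~3)} implies that $\mathbb{E}$ is adapt\'e, and that \textbf{(EVA~3)} is \emph{\'equivalente} to the other three conditions. You only establish the converse implication: assuming \textbf{(EVA~4)} you derive that every courbe faiblement lisse is lisse. Nothing in your plan returns from \textbf{(EVA~3)} to the $c^{\infty}$-compl\'etude, and this direction does not come for free from your cycle: the natural attempts (define the antid\'eriv\'ee or the int\'egrale de Riemann weakly and hope it lands in $\mathbb{E}$) are circular, since whether the weak integral belongs to $\mathbb{E}$ is precisely the completeness question, and the naive encoding of a Mackey--Cauchy sequence into a curve $c$ with $c(1/n)=x_{n}$ breaks down at the parameter $0$, where the value would have to be the missing limit. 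In \cite{KrMi97} this return trip is handled by a separate difference-quotient argument through the $\operatorname{Lip}^{k}$ hierarchy, not by the bump construction you use for \textbf{(EVA~2)}. As written, your proof shows that \textbf{(EVA~1)}, \textbf{(EVA~2)}, \textbf{(EVA~4)} each characterize adaptedness, but leaves \textbf{(EVA~3)} as a mere necessary condition. A secondary imprecision in the same step: the uniform boundedness you need to start the argument (the difference quotients of a weakly smooth curve are born\'es on compacts) is the Mackey theorem ``weakly bounded $=$ bounded'' in $\mathbb{E}$, not Banach--Steinhaus in $\mathbb{E}_{B}$; and boundedness of the difference quotients of every order is not by itself enough -- you need the boundedness of the order-$(k+1)$ divided differences to make the order-$k$ ones Mackey--Cauchy before completeness can produce the strong derivatives.
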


Une application linéaire entre espaces adaptés est bornée
\footnote{
Soient $\mathbb{E}$ et $\mathbb{F}$ deux espaces vectoriels topologiques. Une application linéaire
 $f:\mathbb{E} \to \mathbb{F}$ est \emph{bornée}\index{bornée!application linéaire} ou bornologique\index{bornologique!application linéaire} si l'image d'une partie bornée de $\mathbb{E}$ est une partie bornée de  $\mathbb{F}$.
} si et seulement si elle est convenablement lisse.

\begin{notation}
\label{N_LinearMapsBetweenConvenientSpaces}
Si $\mathbb{E}_1$ et $\mathbb{E}_2$ sont des espaces adaptés, on note
\begin{description}
\item
$\operatorname{L} \left(  \mathbb{E}_{1},\mathbb{E}_{2} \right) $\index{LE1E2_1lineaire@$\operatorname{L} \left( \mathbb{E}_{1},\mathbb{E}_{2} \right) $ (espace des applications linéaires de $\mathbb{E}_1$ dans $\mathbb{E}_2$)} l'espace des applications linéaires de $\mathbb{E}_1$ dans $\mathbb{E}_2$;
\item
$L \left( \mathbb{E}_{1},\mathbb{E}_{2} \right) $\index{LE1E2_2borne@$L \left( \mathbb{E}_{1},\mathbb{E}_{2} \right) $ (espace des applications linéaires bornées de $\mathbb{E}_1$ dans $\mathbb{E}_2$)} l'espace des applications linéaires bornées de $\mathbb{E}_1$ dans $\mathbb{E}_2$.
\item
$\mathcal{L} \left( \mathbb{E}_{1},\mathbb{E}_{2} \right) $\index{LE1E2_3continue@$\mathcal{L} \left( \mathbb{E}_{1},\mathbb{E}_{2} \right) $ (espace des applications linéaires continues de $\mathbb{E}_1$ dans $\mathbb{E}_2$)} l'espace des applications linéaires continues de $\mathbb{E}_1$ dans $\mathbb{E}_2$.
\end{description}
\end{notation}
Si $\mathbb{E}_{1}=\mathbb{E}_{2}=\mathbb{E}$, on utilise respectivement les notations $\operatorname{L} \left( \mathbb{E} \right) $, $\mathcal{L} \left( \mathbb{E} \right) $ et $L \left(  \mathbb{E} \right) $.

\begin{example}
\label{Ex_R-infinity}
{\sf Espace des suites réelles finies.}\\ 
La somme directe \index{Rinfty@$\mathbb{R}^{\infty}$ (espace des suites réelles finies)}
$\mathbb{R}^{\infty}=\bigoplus\limits_{n\in\mathbb{N}^{\star }}\mathbb{R}$, noté aussi $\mathbb{R}^{ \left(  \mathbb{N}^\star \right) }$, est l'ensemble de toutes les suites de nombres réels nulles à partir d'un certain rang, espace qui peut être vu comme la réunion  $\bigcup\limits_{n\in\mathbb{N}^{\star }}\mathbb{R}^{n}$ où chaque $\mathbb{R}^{n}$ est identifié à l'hyperplan
$\mathbb{R}^{n}\times\left\{0\right\}$ de $\mathbb{R}^{n+1}$ \textit{via} l'inclusion $\iota_{n}^{n+1}:\mathbb{R}^{n}\to\mathbb{R}^{n+1}$. L'espace $\mathbb{R}^{\infty}$ est muni de la topologie limite directe, i.e. la plus fine topologie pour laquelle les inclusions $\iota_{n}:\mathbb{R}^{n}\to\mathbb{R}^{\infty}$ sont continues. Une partie $U$ de $\mathbb{R}^{\infty}$ est ouverte pour cette topologie si et seulement si 
$U\cap\mathbb{R}^{n}$ est ouvert dans $\mathbb{R}^{n}$ pour tout $n\in\mathbb{N}^{\star }$. \\
Munie de l'addition et la multiplication scalaire ponctuelles, $\mathbb{R}^{\infty}$ est un espace vectoriel localement convexe \textbf{non métrisable} (cf.
\cite{Spa14}, Example 3.10). \\
Les parties
\[
B_{0}^{\infty}\left(  \epsilon
_{n}\right)  =\left\{  \left(  x_{n}\right)  _{n\in\mathbb{N}^{\star }}\in\mathbb{R}^{\infty}:\left\vert x_{n}\right\vert <\epsilon_{n}\right\},
\]
où $ \left( \epsilon_{n} \right)  _{n\in\mathbb{N}^{\star }}$ est une suite de réels strictement positifs constitue une base de voisinages ouverts de $0$
(\cite{Jar81}, 4.1.4).\\
 $\mathbb{R}^{\infty}$ est un \emph{espace régulier}\index{régulier!espace}\index{espace!régulier}: toute partie borné $B$ de $\mathbb{R}^{\infty}$ est contenue dans un $\mathbb{R}^{n}$.\\
D'après \emph{\textbf{(EVA~4)}}, $\mathbb{R}^{\infty}$ est un espace vectoriel adapté.\\
$\mathbb{R}^{\infty}$ est aussi une limite directe stricte de plongements linéaires fermés des espaces de Banach $\mathbb{R}^{n}$.
\end{example}

\begin{theorem}
\label{T_ProprietesApplicationsDifferentiablesEntreEspacesAdaptes}
Soit $U$ un $c^{\infty}$-ouvert d'un espace vectoriel adapté $\mathbb{E}$ et soient $\mathbb{F}$ et $\mathbb{G}$ deux sous-espaces adaptés.
\begin{enumerate}
\item
L'espace $C^{\infty} \left( U,\mathbb{F} \right) $\index{CinftyUF@$C^{\infty} \left( U,\mathbb{F} \right) $} peut être muni d'une structure d'espace vectoriel adapté. Le sous-espace $L(\mathbb{E},\mathbb{F}) $ de toutes les applications bornées de $\mathbb{E}$ dans $\mathbb{F}$ est fermé dans $C^{\infty}(\mathbb{E},\mathbb{F}) $.
\item
La catégorie des espaces vectoriels adaptés est cartésiennement fermée, i.e. on a  l'identification naturelle~:
\[
C^{\infty}\left( \mathbb{E} \times \mathbb{F},\mathbb{G} \right) \simeq C^{\infty} \left( \mathbb{E},C^{\infty
} \left( \mathbb{F},\mathbb{G} \right) \right) .
\]
\item
L'opérateur différentiel 
\[
d :C^{\infty} \left( \mathbb{E},\mathbb{F} \right)  \to C^{\infty}\left( \mathbb{E},L\left(
\mathbb{E},\mathbb{F} \right) \right)
\]
\[
df(x)v =\underset{t\to0}{\lim}\dfrac{f\left(
x+tv\right)  -f\left(  x\right)  }{t}
\]
existe, est linéaire et lisse.
\item
La règle de la dérivation des fonctions composées est valide~:
\[
d \left( f \circ g \right)(x)  v
=
df \left( g(x) \right) dg(x) v.
\]
\end{enumerate}
\end{theorem}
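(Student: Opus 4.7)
Le plan consiste à exploiter systématiquement la caractérisation (EVA~3) : dans le cadre adapté, la lissité se teste par composition avec des courbes lisses $\mathbb{R} \to \mathbb{E}$. Pour le point 1, je munirais $C^\infty(U, \mathbb{F})$ de la structure initiale par rapport à la famille des applications $c^* : f \mapsto f \circ c$ pour $c \in C^\infty(\mathbb{R}, U)$, en déclarant bornée toute partie $\mathcal{B} \subset C^\infty(U, \mathbb{F})$ dont l'image par chaque $c^*$ est bornée dans $C^\infty(\mathbb{R}, \mathbb{F})$. La $c^\infty$-complétude se ramènerait alors à celle de $\mathbb{F}$ en observant qu'une suite de Mackey-Cauchy dans $C^\infty(U, \mathbb{F})$ induit par composition des suites Mackey-Cauchy dans $C^\infty(\mathbb{R}, \mathbb{F})$, qui convergent grâce à (EVA~4), la commutation avec les opérateurs de dérivation garantissant la lissité de la limite. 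La fermeture de $L(\mathbb{E}, \mathbb{F})$ dans $C^\infty(\mathbb{E}, \mathbb{F})$ découle du fait que la linéarité s'exprime comme des égalités ponctuelles $f(\lambda u + \mu v) - \lambda f(u) - \mu f(v) = 0$ préservées par convergence.

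Le point 2, la fermeture cartésienne, constitue le c\oe ur technique de l'énoncé. Dans le sens direct, étant donnée $f \in C^\infty(\mathbb{E} \times \mathbb{F}, \mathbb{G})$, l'application partielle $f(x, \cdot)$ est lisse pour tout $x$, et je montrerais que $\hat f : x \mapsto f(x, \cdot)$ appartient à $C^\infty(\mathbb{E}, C^\infty(\mathbb{F}, \mathbb{G}))$ en testant par courbes : pour $c_1 \in C^\infty(\mathbb{R}, \mathbb{E})$, il suffit que $\hat f \circ c_1$ soit lisse à valeurs dans $C^\infty(\mathbb{F}, \mathbb{G})$, ce qui, par la structure initiale définie au point 1, revient à la lissité de $(t,s) \mapsto f(c_1(t), c_2(s))$ pour toute courbe $c_2 \in C^\infty(\mathbb{R}, \mathbb{F})$, immédiate par composition de $f$ avec la courbe lisse $(c_1, c_2)$ de $\mathbb{E} \times \mathbb{F}$. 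La réciproque est entièrement symétrique : on teste $f$ le long d'une courbe $\gamma = (c_1, c_2) : \mathbb{R} \to \mathbb{E} \times \mathbb{F}$ en se ramenant à $t \mapsto \hat f(c_1(t))(c_2(t))$.

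Pour les points 3 et 4, je partirais du fait que $t \mapsto f(x + tv)$ est lisse (composée de $f$ avec une courbe affine), ce qui garantit l'existence du nombre dérivé en $0$ et fournit $df(x)v$. La linéarité et la bornitude de $v \mapsto df(x)v$ s'obtiennent à nouveau par test sur les courbes, et la lissité conjointe de $(x, v) \mapsto df(x)v$, combinée à la fermeture cartésienne du point 2, livre la lissité de l'application $x \mapsto df(x)$ à valeurs dans $L(\mathbb{E}, \mathbb{F})$. La règle de la chaîne se ramène finalement, par test sur les courbes, à la règle classique pour des fonctions $\mathbb{R} \to \mathbb{R}$, triviale. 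Le principal obstacle reste le point 2 : la fermeture cartésienne repose sur un principe de bornage uniforme spécifique au cadre adapté, pour lequel je renverrais à la démonstration détaillée de \cite{KrMi97}.
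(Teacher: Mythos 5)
Le papier ne démontre pas cet énoncé : c'est un rappel des résultats fondamentaux du calcul adapté, importé tel quel de \cite{KrMi97} (c'est d'ailleurs la seule « preuve » disponible dans l'article). Votre esquisse suit essentiellement le développement standard de cette référence — structure initiale sur $C^{\infty}(U,\mathbb{F})$ via les $c^{*}$, réduction de la loi exponentielle au cas scalaire par test sur les courbes, chaîne ramenée à la dimension un — et vous identifiez correctement le véritable noyau dur (le principe de bornage uniforme sous-jacent à $C^{\infty}(\mathbb{R}^{2},\mathbb{R})\simeq C^{\infty}\bigl(\mathbb{R},C^{\infty}(\mathbb{R},\mathbb{R})\bigr)$) en le renvoyant à \cite{KrMi97}, ce qui est exactement le statut que lui donne le papier. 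Seule réserve mineure : la linéarité de $v\mapsto df(x)v$ n'est pas un simple « test sur les courbes » ; elle exige un argument spécifique (lemme de la courbe spéciale et règle de la chaîne déjà établie, cf. \cite{KrMi97}, 3.18), mais cela ne change pas la validité globale du plan.
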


Dans ce cadre adapté, les notions de variété, fibré et groupe de Lie se définissent de manière analogue au cadre de Banach (cf. \cite{KrMi97}, Chapitres VI et VIII).\\
Il est important de noter que, dans ce même cadre, toute dérivation de l'algèbre des fonctions de la variété n'est pas nécessairement un champ de vecteurs (cinématique) et que tout champ de vecteurs n'admet pas nécessairement un flot
\footnote{En fait, au delà du cadre des espaces de Banach, les résultats classiques sur l'existence et l'unicité de solutions d'équations différentielles, obtenus à partir de théorèmes résultant du théorème du point fixe, ne s'appliquent plus nécessairement.} (cf. \cite{KrMi97}, 32.12).

\section{Variétés de Poisson partielles}
\label{_VarietesPoissonPartielles}

Les structures de Poisson peuvent être définies à l'aide d'un morphisme antisymétrique  $P$ du fibré cotangent $T'M$ dans le fibré tangent $TM$. \\
Ce morphisme permet de définir un crochet $\{.,.\}$ sur l'ensemble $C^\infty(M)$ des fonctions lisses sur la variété, i.e. une application bilinéaire antisymétrique $C^\infty(M) \times C^\infty(M) \to C^\infty(M)$ qui satisfait à la fois les identités de Leibniz et de Jacobi.
\\

La notion de structure de Poisson partielle apparaît dès le cadre d'espaces de Banach non réflexifs où le morphisme $P$ peut n'être défini que sur un sous-fibré faible $T^\flat M$ de $T'M$ et le crochet de Poisson n'est alors défini que sur une sous-algèbre de  $C^\infty(M)$.

\subsection{Structures de Poisson linéaires partielles}
\label{___StructuresDePoissonLineairesPartielles}

Soit $\mathbb{E}$ un espace vectoriel adapté. On note $\mathbb{E}^\prime$ son dual bornologique et  $<.,.>$ le crochet de dualité. \\
Soit $\mathbb{E}^\flat$ un sous-espace vectoriel de $\mathbb{E}^\prime$ muni de sa propre structure d'espace adapté où l'inclusion $\mathbb{E}^\flat \to \mathbb{E}^\prime$ est bornée. 
Si $\mathbb{E}^\flat$ sépare les points de $\mathbb{E}$, le crochet de dualité $<.,.>$ entre $\mathbb{E}^\prime$ et $\mathbb{E}$ induit alors une forme bilinéaire bornée sur $\mathbb{E}^\flat\times\mathbb{E}$  qui est non dégénérée à gauche et qui est encore notée $<.,.>$ si aucune confusion n'est possible.\\

Un sous-espace $\mathbb{Y}$ d'un espace vectoriel adapté  $\mathbb{X}$ est dit \emph{fermé}\index{fermé} s'il est  $c^\infty$-fermé au sens de \cite{KrMi97}.\\

Plus généralement, on adopte systématiquement les notations et la terminogie utilisées dans \cite{KrMi97} et rappelées à la section \ref{_CadreAdapte}.

\begin{definition}
\label{D_StructureDePoissonLineairePartielle}
Une \emph{structure de Poisson linéaire partielle}\index{partielle!structure de Poisson linéaire}\index{structure!de Poisson linéaire partielle} sur $\mathbb{E}$ est la donnée de 
\begin{itemize}
\item[\emph{\textbf{(SPLp~1)}}] 
un sous-espace vectoriel $\mathbb{E}^\flat$ de $\mathbb{E}^\prime$  muni de sa propre structure d'espace adapté et tel que l'inclusion de $\mathbb{E}^\flat$ dans $\mathbb{E}^\prime$ est bornée~;
\item[\emph{\textbf{(SPLp~2)}}]
une application linéaire bornée 
$\mathtt{P}:\mathbb{E}^\flat\to \mathbb{E}$ 
telle que
\[
<\omega_1,\mathtt{P}\omega_2> = -<\omega_2,\mathtt{P}\omega_1>
\]
pour tous $\omega_1$ et $\omega_2$ de $\mathbb{E}^\flat$.\\
On dit alors que $ \left( \mathbb{E}^\flat,\mathbb{E},\mathtt{P} \right) $\index{Eflatmathbb@$ \left( \mathbb{E}^\flat,\mathbb{E},\mathtt{P} \right) $ (espace de Poisson linéaire partiel)} est un \emph{espace de Poisson linéaire partiel}\index{partiel!espace de Poisson linéaire}.
\end{itemize}
\end{definition}

Si $\mathtt{P}$ est un isomorphisme adapté, alors la $2$-forme $\Omega _\mathtt{P} \in L^2 \mathbb{E}^\prime$ définie par
\[
\Omega_\mathtt{P}(u,v)=<\mathtt{P}^{-1}(u), v>
\]
est une forme symplectique faible sur $\mathbb{E}$.\\ 
Quand  $\mathbb{E}^\flat=\mathbb{E}^\prime$, $\Omega _\mathtt{P}$ est une forme symplectique forte.\\
Si $\Omega$ est une forme symplectique faible sur $\mathbb{E}$  et si $\mathbb{E}^\flat$  est l'image de l'opérateur linéaire associé $\Omega^\flat : \mathbb{E} \to \mathbb{E}^\prime$ 
alors  
$\mathtt{P}= \left( \Omega^\flat \right) ^{-1}:\mathbb{E}^\flat \to \mathbb{E}$ 
est une structure de Poisson partielle linéaire qui est aussi un isomorphisme adapté. Ceci justifie la définition suivante~:
 
\begin{definition}
\label{D_StructureSymplectiqueLineairePartielle}  
Soit $\mathtt{P}:\mathbb{E}^\flat\to \mathbb{E}$ une structure de Poisson linéaire partielle.\\
Si $\mathtt{P}$ est un isomorphisme adapté,  on dit que $\mathtt{P}$ est une \emph{structure symplectique linéaire partielle}\index{partielle!structure symplectique linéaire} sur $\mathbb{E}$. Quand $\mathbb{E}^\flat=\mathbb{E}^\prime$ on dit simplement que $\mathtt{P}$ est \emph{symplectique linéaire}\index{symplectique linéaire}.
\end{definition}

\begin{remark}
\label{R_OP} 
Soit $\mathtt{P}:\mathbb{E}^\flat\to \mathbb{E}$ une structure de Poisson linéaire partielle telle que   $\ker \mathtt{P}$ est scindé et que $\mathtt{P}(\mathbb{E}^\flat)$ est fermé dans $\mathbb{E}$. Ceci implique que  $\mathtt{P}$  induit une application linéaire bijective  bornée $\widehat{\mathtt{P}}$ de $\mathbb{E}^\flat/\ker \mathtt{P}$ sur $\mathtt{P}(\mathbb{E})$ (cf. \cite{No05}).  Mais, {\bf dans le contexte adapté,  $\widehat{P}$ n'est pas un isomorphisme d'espaces adaptés}.\\
Dans la suite de cette remarque, {\bf on suppose   que  $\widehat{\mathtt{P}}$ est un isomorphisme d'espaces adaptés}. Notons que cette  situation se présente si $\mathbb{E}^\flat$  ou $\mathtt{P}(\mathbb{E})$ est un espace de Banach.  On a alors une décomposition $\mathbb{E}^\flat=\ker \mathtt{P} \oplus \mathbb{F}$ et si $q_{\mathbb{F}}$ est la  restriction de la projection associée $q$  de $\mathbb{E}^\flat$ sur  $\mathbb{F}$, alors  $q_\mathbb{F}$ est un  isomorphisme de $\mathbb{F}$ sur  $\mathbb{E}^\flat/\ker \mathtt{P}$  et par suite la restriction $\mathtt{P}_{\mathbb{F}}$ de $\mathtt{P}$ à $\mathbb{F}$ qui est égale à  $\hat{\mathtt{P}}\circ q_{\mathbb{F}}$ est un isomorphisme sur  $\mathtt{P}(\mathbb{E}^\flat)$. 
Puisque l'inclusion de $\mathbb{E}^\flat$ dans  $\mathbb{E}^{\prime}$ est bornée, il en est de même pour  l'inclusion de  $\mathbb{F}$ et le crochet de dualité  en restriction à $\mathbb{F}$ est une forme bilinéaire bornée sur $\mathbb{F} \times \mathbb{E}$.  
Par suite, on obtient une forme bilinéaire bornée sur  $\mathtt{P}(\mathbb{E})$ définie par
\[
\Omega_{\mathtt{P}}(u,v)
=<\beta,\mathtt{P}_{\mathbb{F}} \alpha> 
\; \textrm{ si }\; 
\mathtt{P}_\mathbb{F}(\alpha) =u,\;
 \mathtt{P}_\mathbb{F}(\beta)=v.
\]
D'une part,  $\Omega_P$ ne dépend pas du choix de  $\mathbb{F}$. D'autre part, l'espace vectoriel de Poisson partiel  $ \left( \mathbb{F},\mathbb{E}, \mathtt{P}_{\mathbb{F}} \right) $  donne lieu  à la même  structure linéaire de Poisson partielle  que  $ \left( \mathbb{E}^\flat,\mathbb{E},\mathtt{P} \right) $ dans le sens où,  pour chaque  $\alpha \in \mathbb{E}^\flat$ on a  $\mathtt{P}(\alpha)=\mathtt{P}_{\mathbb{F}}(\alpha_{\mathbb{F}})$ si $\alpha_{\mathbb{F}}=q_\mathbb{F}(\alpha)$.  Mais  $(\mathbb{F},\mathbb{E}, \mathtt{P}_{\mathbb{F}})$ est une structure symplectique partielle telle que $(\Omega_\mathtt{P} )^\flat=\mathtt{P}_{\mathbb{F}}^{-1}$ qui donne lieu à une forme symplectique faible $\Omega_\mathtt{P}$ sur  $\mathtt{P}(\mathbb{E})$.
\end{remark}

\begin{definition}
\label{D_Annulateur}
Pour tout sous-espace fermé $\mathbb{F}$ d'un espace vectoriel adapté $\mathbb{E}$,  \emph{l'annulateur}\index{annulateur}\index{Fannulateur@$\mathbb{F}^{\operatorname{ann}}$ (annulateur de $\mathbb{F}$)} de $\mathbb{F}$ est le sous espace  
\[
\mathbb{F}^{\operatorname{ann}}=
\{\alpha \in \mathbb{E}^\prime: \; 
\forall u \in \mathbb{F}, 
<\alpha,u>=0 \}.
\]
\end{definition} 
 
\begin{definition}
\label{D_Orthogonal_Isotropique} 
Soit $ \left( \mathbb{E}^\flat,\mathbb{E},\mathtt{P} \right) $ une structure  linéaire partielle.
\begin{enumerate}
\item 
Pour tout sous espace $\mathbb{A}$ de $\mathbb{E}^\flat$, l'espace vectoriel 
\[
\mathbb{A}^{\perp_{\mathtt{P}}}:=
\{\omega\in \mathbb{E}^\flat:\;
\forall \alpha \in \mathbb{A}\, \;
 <\omega, \mathtt{P} \alpha>=0\}
\] 
est appelé \emph{orthogonal de $\mathbb{A}$ relativement à $\mathtt{P}$}\index{AperpP@$\mathbb{A}^{\perp_{\mathtt{P}}}$ (orthogonal de $\mathbb{A}$ relativement à $\mathtt{P}$)}.\\
\item
L'espace  $\mathbb{A}$ est dit \emph{isotrope}\index{isotrope} si $\mathbb{A} \subset \mathbb{A}^{\mathtt{P}}$.
\end{enumerate}
\end{definition}
\begin{definition}
\label{D_Orthogonal_Coisotropic}
Soit $ \left( \mathbb{E}^\flat,\mathbb{E},\mathtt{P} \right) $ un espace de Poisson linéaire partiel.
\begin{enumerate}
\item  
Si $\mathbb{F}$ est un sous espace vectoriel adapté de $\mathbb{E}$ (i.e. si $\mathbb{F}$ est fermé), on pose\index{F0@$\mathbb{F}^0$}  
\[
\mathbb{F}^0:=\{\alpha \in \mathbb{E}^\flat:\;
\forall\; u\in \mathbb{F}, \; <\alpha,u>=0  \}.
\]
\emph{L'orthogonal}\index{Fperp@$\mathbb{F}^{\perp_{\mathtt{P}}}$} $\mathbb{F}^{\perp_{\mathtt{P}}}$ d'un sous espace adapté $\mathbb{F}$ de $\mathbb{E}$ est l'espace vectoriel
\[
\mathbb{F}^{\perp_{\mathtt{P}}} = \mathtt{P}(\mathbb{F}^0).
\]
\item
Le sous espace $\mathbb{F}$ est dit  \emph{co-isotrope}\index{co-isotrope} si $\mathbb{F}^0$ est isotrope dans $\mathbb{E}^\flat$.
\end{enumerate}
\end{definition}

En utilisant la terminolgie classique (cf. \cite{Wei88}), on introduit la notion essentielle de sous espace Lagrangien.

\begin{definition}
\label{D_SousEspaceLagrangien} 
Soit $ \left( \mathbb{E}^\flat,\mathbb{E},\mathtt{P} \right) $ un espace symplectique linéaire partiel.\\
Un sous espace adapté $\mathbb{F}$ de $\mathbb{E}$ est dit \emph{Lagrangien}\index{sous-espace Lagrangien}\index{Lagrangien!sous-espace} si 
$\mathtt{P} \left( \mathbb{F}^0 \right) = \mathbb{F}$.\\
On dit qu'un sous espace Lagrangien $\mathbb{F}$ est \emph{scindé} \index{sous espace Lagrangien!scindé}\index{scindé!sous-espace Lagrangien} s'il existe un supplémentaire Lagrangien de $\mathbb{F}$ dans $\mathbb{E}$.
\end{definition}
Notons que dans le cas d'une structure symplectique linéaire partielle, tout sous espace Lagrangien est aussi co-isotrope.\\
Cependant, en général, un sous espace Lagrangien n'admet pas nécessairement de supplémentaire.\\
\begin{definition} 
\label{D_MorphismeDePoissonLineaire}
Soient $ \left( \mathbb{E}_1^{\flat}, \mathbb{E}_1, \mathtt{P}_1 \right)$ et $ \left( \mathbb{E}_2^{\flat}, \mathbb{E}_2, \mathtt{P}_2 \right)$ deux espaces de Poisson linéaires partiels\index{partiel!espace de Poisson linéaire}.\\
Une application linéaire bornée $\Phi:\mathbb{E}_1\to \mathbb{E}_2$ est appelée \emph{morphisme de Poisson (linéaire)}\index{morphisme!de Poisson (linéaire)}\index{Poisson!morphisme (linéaire)}  si l'adjoint\footnote{Rappelons que  l'adjoint d'un op\'erateur lin\'eaire born\'e  entre deux espaces vectoriels topologiques localement convexes est aussi born\'e (cf. \cite{PaSw94}).} $\Phi^{\star }$ de $\Phi$ vérifie
\begin{itemize}
\item[\emph{\textbf{(PMorphL~1)}}]
{\hfil $ \Phi^{\star }(\mathbb{E}_2^{\flat}) \subset \mathbb{E}_1^{\flat} $} 

\item[\emph{\textbf{(PMorphL~2)}}]
{\hfil
$\mathtt{P}_2 =	\Phi \circ \mathtt{P}_1 \circ \Phi^\star  $}
\end{itemize}
De plus, si chaque  $\mathtt{P}_i$ ($i \in \{1,2\}$) est symplectique partiel et est un morphisme de Poisson (linéaire),  $\Phi$ est alors appelé  \emph{morphisme symplectique partiel (linéaire)}\index{partiel!morphisme symplectique (linéaire)}.
\end{definition}

Nous introduisons la notion de polarisation d'un espace de Poisson linéaire (cf. \cite{AmAw04})~:

\begin{definition}
\label{D_PolarisationLinéaire} 
On appelle \emph{polarisation}\index{polarisation} d'un espace de Poisson linéaire partiel  $ \left( \mathbb{E}^{\flat}, \mathbb{E}, \mathtt{P} \right) $ la donnée d'un sous espace $\mathbb{E}^\tau$ fermé de $\mathbb{E}^\flat$ tel que $\mathtt{P}(\mathbb{E}^\tau \cap \mathbb{E}^\flat)$ est Lagrangien  dans $\mathtt{P}(\mathbb{E}^\flat)$.
\end{definition}
Notons que si  $\mathbb{E}^\tau$ est une polarisation de $ \left( \mathbb{E}^{\flat}, \mathbb{E}, \mathtt{P} \right)$ alors $\ker \mathtt{P}$ est contenu dans  $\mathbb{E}^\tau$.\\

Soient $ \left( \mathbb{E},\mathbb{E}^\flat, \mathtt{P} \right) $ une structure  symplectique partielle et $\Omega$ la forme symplectique associée. S'il existe un espace Lagrangien $\mathbb{L}$ de $\Omega$ alors $\mathbb{E}^\tau=\Omega^\sharp(\mathbb{L})$ est une polarisation de l'espace linéaire de Poisson partiel $ \left( \mathbb{E}^\flat,\mathbb{E}, \mathtt{P} \right) $. Cette situation se produit toujours si $\mathbb{E}$ est hilbertizable. \\
Plus généralement, si $\mathtt{P}:\mathbb{E}^\flat\to \mathbb{E}$ est une structure de Poisson linéaire partielle telle que  $\ker \mathtt{P}$ est scindé et que $\mathtt{P}(\mathbb{E}^\flat)$ est fermé dans $\mathbb{E}$ alors avec les hypothèses et notations  de la Remarque \ref{R_OP}, l'espace $\mathtt{P}(\mathbb{E}^\flat)$ peut être muni d'une  forme symplectique  $\Omega_\mathtt{P}$. Supposons qu'il existe un espace Lagrangien $\mathbb{L}$ pour $\Omega_{\mathtt{P}}$ alors $\mathbb{E}^\tau=P^{-1}(\mathbb{L})$ est une polarisation de $ \left( \mathbb{E}^\flat, \mathbb{E}, \mathtt{P} \right) $. 
  
\subsection{Variétés de Poisson partielles}
\label{___VarietesPoissonPartielles}

Soit $M$ une variété adaptée modelée sur l'espace vectoriel adapté $\mathbb{M}$.
On note $:p_{M}:TM\to M$ son fibré tangent cinématique (\cite{KrMi97}, 28.12) et $p_{M}^{\prime}:T^{\prime}M\to M$ son fibré cotangent cinématique (\cite{KrMi97}, 33.1).

\subsubsection{L'algèbre $\mathfrak{A} (U)$}
\label{___LAlgebremathfrakAU}

\begin{definition}
\label{D_SousFibreFaible}
Un sous-fibré $p^{\flat}:T^{\flat}M\to M$ de $p_{M}^{\prime}:T^{\prime}M \to M$ où $p^{\flat}:T^{\flat}M\to M$ est un fibré adapté, est un \emph{sous-fibré faible} de \index{faible!sous-fibré} $p_{M}^{\prime}:T^{\prime}M\to M$ si l'injection canonique $\iota:T^{\flat}M \to T^{\prime}M$ est un morphisme de fibrés adaptés.
\end{definition}

En se référant à \cite {KrMi97}, Définition~48.5, on introduit l'ensemble suivant.
\begin{definition}
\label{D_AU}
Pour tout ouvert $U$ de $M$, on considère l'ensemble $\mathfrak{A}(U)$\index{AmathfrakU@$\mathfrak{A}(U)$} des fonctions $f\in C^\infty(U)$ telle que, pour tout entier naturel non nul $k$ et tout $x$ de $U$, la dérivée d'ordre $k$ de $f$ en $x$, $d^{k}f(x)\in L_{\operatorname{sym}}^{k}(T_{x}M,\mathbb{R})$  vérifie~:
\begin{equation}
\label{eq_dkf}
\forall (u_2,\dots,u_k) \in (T_xM)^{k-1},\;
d^{k}_xf(.,u_{2},\dots,u_{k}) \in T_{x}^{\flat}M.
\end{equation}
\end{definition}

\begin{remark}
\label{R_AUEndomorphime}
Soit $f\in\mathfrak{A} (U)$, alors pour tout entier naturel $k$, tout $x\in U$et tout $(k-1)$-uplet $ \left( u_2,\dots,u_k \right) $ de $T_xM$, si $A$ est un endomorphisme de $T_xM$ alors l'application linéaire $u\mapsto d_x^kf(A(u),u_2,\dots,u_k)$ appartient à $T_x^\flat M$.
\end{remark}
On a alors le résultat suivant correspondant à  \cite{CaPe23}, Proposition~7.1~: 

\begin{proposition}
\label{P_AUalgebra}
Soit un ouvert donné $U$ de $M$.
\begin{enumerate}
\item
L'ensemble $\mathfrak{A} (U)$ est une sous-algèbre de $C^\infty(U)$.
\item
Pour tout entier naturel $k$ et tous champs de vecteurs locaux $X_1,\dots ,X_k$ au-dessus de $U$, 
l'application $x\mapsto  d^kf(X_1,\dots,X_k)(x)$ appartient à $\mathfrak{A} (U)$.
\end{enumerate}
\end{proposition}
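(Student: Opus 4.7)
Pour la première assertion, la stabilité par combinaisons linéaires est immédiate par linéarité de $d^k$ et parce que $T_x^\flat M$ est un sous-espace vectoriel. La stabilité par produit s'obtient en appliquant la règle de Leibniz généralisée
\[
d^k_x(fg)(u_1,\dots,u_k) = \sum_{S\subseteq\{1,\dots,k\}} d^{|S|}_x f\bigl((u_i)_{i\in S}\bigr)\cdot d^{k-|S|}_x g\bigl((u_j)_{j\notin S}\bigr),
\]
puis en fixant $u_2,\dots,u_k$ et en faisant varier $u_1$ : dans chaque terme, l'indice $1$ appartient soit à $S$ soit à son complémentaire, et la forme linéaire en $u_1$ qui en résulte est un multiple scalaire d'un élément de $T_x^\flat M$ (via l'hypothèse $f\in\mathfrak{A}(U)$ ou $g\in\mathfrak{A}(U)$). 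La somme reste dans $T_x^\flat M$, ce qui établit (1).

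Pour la seconde, soient $f\in\mathfrak{A}(U)$ et $X_1,\dots,X_k$ des champs lisses. Posons $h(x):=d^kf(x)\bigl(X_1(x),\dots,X_k(x)\bigr)$. L'idée est d'itérer la règle de Leibniz appliquée à la composition entre la forme multilinéaire $d^kf$ et les sections $X_i$ : j'obtiens ainsi $d^m_xh(v_1,\dots,v_m)$ comme somme finie de termes de la forme
\[
d^{k+|T|}_x f\bigl(Y_{1,x},\dots,Y_{k,x},(v_j)_{j\in T}\bigr),
\]
où $T,S_1,\dots,S_k$ forment une partition de $\{1,\dots,m\}$ et où chaque $Y_{i,x}$ vaut $X_i(x)$ si $S_i=\emptyset$, et $d^{|S_i|}_x X_i\bigl((v_j)_{j\in S_i}\bigr)\in T_xM$ sinon (expression à comprendre localement via une trivialisation de $TM$).

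Fixant $v_2,\dots,v_m$ et faisant varier $v_1$, dans chaque terme l'indice $1$ apparaît soit dans $T$, soit dans un unique $S_i$. Dans le premier cas, la dépendance en $v_1$ est directement dans un slot de $d^{k+|T|}_xf$, et la forme linéaire obtenue appartient à $T_x^\flat M$ par la définition même de $\mathfrak{A}(U)$. Dans le second, la dépendance en $v_1$ passe par l'application linéaire
\[
A:T_xM\to T_xM,\qquad v\longmapsto d^{|S_i|}_xX_i\bigl(v,(v_j)_{j\in S_i\setminus\{1\}}\bigr),
\]
qui est un endomorphisme borné de $T_xM$, si bien que la Remarque \ref{R_AUEndomorphime} fournit l'appartenance souhaitée. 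Par stabilité par sommes, $v_1\mapsto d^m_xh(v_1,v_2,\dots,v_m)$ appartient à $T_x^\flat M$, ce qui prouve (2).

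Le principal obstacle réside dans l'écriture soignée de la formule de type Faà di Bruno ci-dessus dans le cadre adapté --- notamment l'identification des $Y_{i,x}$ comme vecteurs de $T_xM$ nécessite un choix de trivialisations locales --- et dans l'appel essentiel à la Remarque \ref{R_AUEndomorphime} au cas (b), qui joue ici le rôle pivot en permettant de traiter la dépendance \og indirecte \fg{} en $v_1$ via les dérivées des champs $X_i$.
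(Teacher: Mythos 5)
Votre démonstration est correcte et suit essentiellement la même stratégie que celle du papier : règle de Leibniz pour les dérivées d'ordre supérieur d'un produit pour le point (1), puis formule de type Faà di Bruno pour $d^m h$ avec discussion selon que l'indice $1$ tombe dans un slot direct de $d^{k+|T|}f$ (où l'on conclut par la définition de $\mathfrak{A}(U)$ et la symétrie) ou dans un $S_i$ (où la Remarque \ref{R_AUEndomorphime} est le pivot, exactement comme dans le texte). Seule différence cosmétique : le papier explicite davantage la combinatoire des partitions et mentionne explicitement le caractère symétrique borné de $d^m_x h$, mais l'argument est le même.
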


\subsubsection{Structures de Poisson partielles}
\label{____StructuresDePoissonPartielles}

On notera $<\;,\;>$ le crochet de dualité canonique entre  les fibrés $T^{\prime}M$ et $TM$.

\begin{definition}
\label{D_MorphismeAntisymetrique}
Un morphisme $P:T^{\flat}M \to TM$ est dit antisymétrique\index{morphisme!antisymétrique} s'il vérifie, pour tous $\xi$ et $\eta$ de $T_x^{\flat}M$, la relation 
\begin{equation}
\label{eq_Antisymetrie}
<\xi,P(\eta)>=-<\eta,P(\xi)>.
\end{equation}
On dit alors que $P$ est une \emph{quasi ancre de Poisson}\index{quasi ancre de Poisson}.
\end{definition}

Étant donné un tel morphisme $P$, on définit sur $\mathfrak{A} (U)$ le crochet $\{.,.\}_P$ par~:
\begin{equation}
\label{eq_CrochetDual}
\{f,g\}_P=-<df,P(dg)>.
\end{equation}
Dans ces conditions, la relation (\ref{eq_CrochetDual}) définit une application bilinéaire antisymétrique $\{.,.\}_P:\mathfrak{A} (U)\times\mathfrak{A} (U)\to
C^\infty (U)$.

\begin{proposition}
\label{P_PoissonLeibniz}
Le crochet $\{.,.\}_{P}$ est à valeurs dans 
$\mathfrak{A} (U)$ et vérifie la propriété de Leibniz~:
\[
\{f,gh\}_{P}=g\{f,h\}_{P}+h\{f,g\}_{P} \label{eq_ProprieteDeLeibnizSurA(M)}
\]
\end{proposition}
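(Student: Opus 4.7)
The statement splits into two essentially independent claims, which I would dispatch in opposite orders of difficulty.

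First, I would establish the Leibniz identity, which is a one-line computation. Using the classical derivation property $d(gh) = g\,dh + h\,dg$ together with the fibrewise $\mathbb{R}$-linearity of the bundle morphism $P$, one expands
$$
\{f,gh\}_P = -<df,\,P(g\,dh + h\,dg)> = -g<df,P(dh)> - h<df,P(dg)> = g\{f,h\}_P + h\{f,g\}_P.
$$
Nothing beyond bilinearity of the dual pairing and the product formula for $d$ is required, so this part is immediate.

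Next, I would show that $\{f,g\}_P$ actually belongs to $\mathfrak{A}(U)$. The key reduction is to rewrite
$$
\{f,g\}_P(x) = -<d_xf,\,P_x(d_xg)> = -d_xf\bigl(X_g(x)\bigr),
$$
where $X_g := P \circ dg$ is a section of the kinematic tangent bundle above $U$. The map $X_g$ is well-defined because the hypothesis $g \in \mathfrak{A}(U)$ forces $dg$ to take values in the sub-bundle $T^\flat M$ (the case $k=1$ of the defining condition), and it is smooth because, by hypothesis, the almost Poisson anchor $P:T^\flat M \to TM$ is a morphism of adapted bundles, so $P$ composed with a smooth $T^\flat M$-valued section yields a smooth kinematic vector field. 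Once $X_g$ is recognized as such, it suffices to invoke Proposition \ref{P_AUalgebra}, item 2, in the degenerate case $k = 1$ with $X_1 = X_g$: it asserts precisely that $x \mapsto d_xf(X_g(x))$ lies in $\mathfrak{A}(U)$, which is exactly what is needed.

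The only non-formal point, and hence the main obstacle, is this identification of $X_g = P \circ dg$ as a genuine smooth section of $TM$ rather than a merely pointwise-defined map; this is resolved by unpacking Definition \ref{D_SousFibreFaible} together with the morphism hypothesis on $P$ from Definition \ref{D_MorphismeAntisymetrique}. Once this is granted, the heavy combinatorial work is already contained in the proof of Proposition \ref{P_AUalgebra}(2), and no further estimates are required. A completely direct alternative would consist in differentiating $\varphi(x) = -<d_xf,P_x(d_xg)>$ in a local trivialisation by iterated product rule and checking term-by-term (via the symmetry of the higher differentials of $f$ and $g$ and the local description of $P$) that each summand defining $d^k_x\varphi(\cdot, u_2,\dots,u_k)$ lies in $T_x^\flat M$; this is feasible but redundant, so I would prefer the reduction above.
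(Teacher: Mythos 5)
Your proof is correct and follows essentially the same route as the paper: the Leibniz identity is obtained by the same expansion of $d(gh)$ and bilinearity of the pairing, and membership of $\{f,g\}_P$ in $\mathfrak{A}(U)$ is obtained, exactly as in the paper, by writing $\{f,g\}_P=-df(P(dg))$ and applying Proposition~\ref{P_AUalgebra}, item~2, with $k=1$ and $X_1=P(dg)$. Your additional remark justifying that $P\circ dg$ is a genuine smooth local vector field is a welcome precision that the paper leaves implicit, but it does not change the argument.
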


\begin{proof}
Puisque, pour tous $f$ et $g$ de $\mathfrak{A} (U)$, on a 
\[
\{f,g\}_P=-df(P(dg))
\] 
d'après la Proposition \ref{P_AUalgebra}, 2., il s'ensuit que $df(P(dg))$ appartient à $\mathfrak{A} (U)$.\\
Finalement, d'après la définition du crochet, on a
\[
\begin{array}{rcl}
\{f,gh\}_P	&=&	-<df, Pd(gh)> 				\\
			&=&	-g<df,P(dh)>-h<df,P(dg)>		\\
			&=&g\{f,h\}_P+h\{f,g\}_P
\end{array}
\]
ce qui établit la propriété de Leibniz.
\end{proof}

En se référant à \cite{KrMi97}, 29.5, on peut associer à deux fibrés vectoriels adaptés $p_E : E \to M$ et $p_F : F \to M$ d'autres fibrés vectoriels adaptés sur cette même base $M$, e.g. $E \oplus F$ et $L(E,F)$.\\
D'autre part, si $ \pi : E \to M$ est un fibré vectoriel adapté, on peut munir l'ensemble $\Gamma(E)$ des sections lisses de ce fibré d'une structure d'espace vectoriel adapté (cf. \cite{KrMi97}, 30).

\begin{notation}
\label{N_UnderlineP_Psharp}
A toute quasi ancre de Poisson $P : T^\flat M \to TM$, on associe
\begin{itemize}
\label{eq_PUnderscore}
\item[]{\hfil$ \underline{P}:\Gamma \left( T^\flat M \right)
\to \mathfrak{X}(M)$}
\item[]
\index{Ptilde@$\tilde{P}$}
ainsi que 
\[
\begin{array}{rccc}
\tilde{P}:	& T^\flat M \times_M T^\flat M &
			\to		&\mathbb{R}		\\
			& (\alpha,\beta)	&
			\mapsto	& <\beta,P \alpha>
\end{array}
\]
\item[]
et aussi le tenseur antisymétique de type $(2,0)$ (i.e. $2$-fois contravariant)\index{PsharpUnderline@$\underline{\tilde{P}}$ (tenseur $(2,0)$ associé à $P$)}
\[
\underline{\tilde{P}}:	
\Gamma \left( T^\flat M \times_M T^\flat M \right) \to C^\infty(M)
\] 
défini pour toutes sections $\eta$ et $\xi$ de $T^\flat M$ et pour tout $x$ de $M$ par
\[
\left( \underline{\tilde{P}}(\eta,\xi) \right) (x)
=
\left\langle \xi(x),P_x(\eta(x) \right\rangle
\]
\end{itemize}
\end{notation}
On pourra s'autoriser quelques abus de notation en écrivant $P$ en lieu et place de $\underline{P}$, $\tilde{P}$ ou $\underline{\tilde{P}}$ lorsqu'il n'y a pas d'ambiguïté  compte tenu du contexte. 
\begin{definition}
\label{D_PartialPoissonStructure}
Soit $p^{\flat}:T^{\flat}M\to M$ un sous-fibré faible de
$p_{M}^{\prime}:T^{\prime}M \to M$ et $P:T^{\flat}M \to TM$ une quasi ancre de Poisson.
\begin{enumerate}
\item
On dit que $ \left( T^\flat M,M,P,\{.,.\}_{P} \right) $\index{crochetPoissonpartiel@$\{.,.\}_{P}$ (crochet de Poisson partiel)}\index{partiel!crochet de Poisson} est une \emph{structure de Poisson partielle}\index{structure!de Poisson partielle} sur $M$  si le crochet $\{.,.\}_{P}$ vérifie l'identité de Jacobi\index{Jacobi!identité}\index{identité!de Jacobi}
\[
\{f,\{g,h\}_{P}\}_{P}+\{g,\{h,f\}_{P}\}_{P}+\{h,\{f,g\}_{P}\}_{P}=0.
\label{eq_IdentiteDeJacobiPartielPoisson}
\]
Dans ce cas, $P$ est appelée \emph{ancre de Poisson partielle}\index{ancre de Poisson partielle}.
\item
Soit $\mathfrak{A} $ une sous-algèbre de $\mathfrak{A} (M)$ telle que la restriction de $\{.,.\}_P$ à $\mathfrak{A} \times\mathfrak{A} $ est à valeurs dans $\mathfrak{A} $. On dit que $ \left( M,\mathfrak{A} ,\{.,.\}_{P} \right) $ est une \emph{variété de Poisson partielle}\index{variété!de Poisson partielle}. Si, de plus, $P$ est un isomorphisme, on dit que $ \left( M,\mathfrak{A} ,\{.,.\}_{P} \right) $ est une \emph{variété symplectique partielle}\index{variété! symplectique  partielle}.
\end{enumerate}
\end{definition}

On peut noter en particulier que si $ \left( T^\flat M,M,P,\{.,.\}_{P} \right) $ est une structure de Poisson partielle, alors $ \left( M,\mathfrak{A} (M),\{.,.\}_{P} \right) $  est toujours une variété de Poisson partielle.

\begin{notation}
$\mathfrak{P}_M$\index{Pmathfrak_M@$\mathfrak{P}_M$} désigne le faisceau des $\mathfrak{A} (U)$-modules engendré par l'ensemble $\{df,f\in\mathfrak{A} (U)\}$.
\end{notation}

Soit $ \left( T^\flat M,M,P,\{.,.\}_{P} \right) $ une structure de Poisson partielle. Comme en dimension finie, on associe au morphisme $P$ un tenseur antisymétrique $\underline{\tilde{P}}$ de type $(2,0)$ sur $T^\flat M$   (cf. Notation \ref{N_UnderlineP_Psharp}). On a alors pour toutes sections $\eta$ et $\xi$ de $T^\flat M$ et pour tout $x$ de $M$ par
\begin{eqnarray}
\label{eq_Lambda}
\left( \underline{\tilde{P}}(\eta,\xi) \right) (x)
=
\left\langle \xi(x),P_x(\eta(x) \right\rangle.
\end{eqnarray} 
Pour tout ouvert $U$ de $M$, l'espace $\mathfrak{P}(U)$ est engendré par les différentielles de fonctions de $\mathfrak{A} (U)$. Pour tous éléments $\sigma_1$ et $\sigma_2$ de $\mathfrak{P}(U)$, on définit le crochet $[.,.]_P$ par
\[
\left[\sigma_1,\sigma_2\right]_P
=
L_{\underline{P}\sigma_1}\sigma_2
-L_{\underline{P}\sigma_2}\sigma_1
-d<\sigma_1,\underline{P}\sigma_2>
\]
L'application
 $[\underline{P},\underline{P}]:\mathfrak{P}(U)\times \mathfrak{P}(U)\to \mathfrak{X}(U)$
\begin{equation}
\label{eq_PP}
[\underline{P},\underline{P}](\sigma_1,\sigma_2)
=
\underline{P}([\sigma_1,\sigma_2]_P)
-[\underline{P}\sigma_1,\underline{P}\sigma_2]
\end{equation}
est alors parfaitement définie~; elle est bilinéaire et sa valeur en  $T^\flat_x M$ dépend uniquement des valeurs de $\sigma_1$ et $\sigma_2$ en $x$. Ainsi $[\underline{P},\underline{P}]$ est un tenseur de type $(2,1)$ à valeurs dans $\mathfrak{X}(M)$.

On peut alors considérer le tenseur noté $[\underline{\tilde{P}},\underline{\tilde{P}}]$ de type $(3,0)$ sur $T^\flat M$ défini, pour tout triplet $ \left( \sigma_1,\sigma_2,\sigma_3 \right) $  d'éléments de $\mathfrak{P}(U)$,  par~:  
\begin{eqnarray}
\label{eq_Lambda}
[\underline{\tilde{P}},\underline{\tilde{P}}](\sigma_1,\sigma_2,\sigma_3)
=
<\sigma_3,[\underline{P},\underline{P}](\sigma_2,\sigma_1)>.
\end{eqnarray}
En adaptant les résultats de \cite{MaMo84}, Appendix~B.1 à notre contexte (cf. \cite{CaPe23}, Chaptitre~7, Lemme~7.27), on a les propriétés suivantes~:
\begin{enumerate}
\item
pour tout triplet $ \left( \sigma_1,\sigma_2,\sigma_3 \right) $ d'éléments de $\mathfrak{P}(U)$,
\[
[\underline{\tilde{P}},\underline{\tilde{P}}](\sigma_1,\sigma_2,\sigma_3)
=<L_{\underline{P}\sigma_1}\sigma_2,\underline{P}\sigma_3>
+<L_{\underline{P}\sigma_2}\sigma_3,\underline{P}\sigma_1>
+<L_{\underline{P}\sigma_3}\sigma_1,\underline{P}\sigma_2>
\]
\item
Si $\sigma_i=df_i$ où $f_i\in\mathfrak{A} (U)$, $i \in \left\{ 1,2,3 \right\} $,
\begin{equation}\label{eq_Schouten}
[\underline{\tilde{P}},\underline{\tilde{P}}](df_1,df_2,df_3)
=\{f_1,\{f_2,f_3\}_P\}_P+\{f_2,\{f_3,f_1\}_P\}_P+\{f_3,\{f_1,f_2\}_P\}_P.
\end{equation}
\end{enumerate}

Le tenseur $2[\underline{\tilde{P}},\underline{\tilde{P}}]$ est appelé \emph{crochet de Schouten}\index{crochet!de Schouten}\index{Schouten!crochet} de $\underline{\tilde{P}}$ (cf. \cite{CaPe23}, Chapitre~7, Definitions~7.28 et 7.50).\\

On a alors le résultat suivant correspondant à \cite{CaPe23}, Chapitre~7, Theorem~7.29:
\begin{theorem}
\label{T_CaracterisationStructurePartielleDePoisson} 
Soit $P:T^\flat M\to TM$ une quasi ancre de Poisson et $\underline{\tilde{P}}$ le tenseur antisymétrique associé. \\ 
$P$ définit une structure de Poisson partielle sur $M$ si et seulement si le crochet de Schouten de $\underline{\tilde{P}}$ est identiquement nul.
\end{theorem}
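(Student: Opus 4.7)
The plan is to use the identity~(\ref{eq_Schouten}) as the central pivot: it says exactly that, on triples of exact differentials $df_1,df_2,df_3$ with $f_i\in\mathfrak{A}(U)$, the Schouten bracket $[\underline{\tilde P},\underline{\tilde P}]$ coincides (up to the factor $2$) with the Jacobiator
\[
\operatorname{Jac}_P(f_1,f_2,f_3)
:=\{f_1,\{f_2,f_3\}_P\}_P+\{f_2,\{f_3,f_1\}_P\}_P+\{f_3,\{f_1,f_2\}_P\}_P.
\]
Thus the forward implication reduces to observing that the Jacobi identity forces $[\underline{\tilde P},\underline{\tilde P}]$ to vanish on all exact-differential triples in $\mathfrak{P}(U)\times\mathfrak{P}(U)\times\mathfrak{P}(U)$, and the converse is immediate from the same identity.

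The substantive step is therefore to upgrade ``vanishes on all exact-differential triples'' to ``vanishes identically on $\mathfrak{P}(U)\times\mathfrak{P}(U)\times\mathfrak{P}(U)$''. First I would invoke the tensorial character of $[\underline{\tilde P},\underline{\tilde P}]$ already recorded after (\ref{eq_PP}): the value of $[\underline{P},\underline{P}](\sigma_1,\sigma_2)(x)$ (hence of $[\underline{\tilde P},\underline{\tilde P}](\sigma_1,\sigma_2,\sigma_3)(x)$) depends only on the values $\sigma_i(x)$, so the object is $C^{\infty}(U)$-linear, in particular $\mathfrak{A}(U)$-linear, in each of its three arguments. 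Second, I would use the very definition of $\mathfrak{P}_M$ as the sheaf of $\mathfrak{A}(U)$-modules generated by $\{df:f\in\mathfrak{A}(U)\}$: any $\sigma\in\mathfrak{P}(U)$ is a finite sum $\sigma=\sum_i g_i\,df_i$ with $g_i,f_i\in\mathfrak{A}(U)$. Expanding trilinearly, $[\underline{\tilde P},\underline{\tilde P}](\sigma_1,\sigma_2,\sigma_3)$ becomes a finite $\mathfrak{A}(U)$-linear combination of terms of the form $[\underline{\tilde P},\underline{\tilde P}](df_1,df_2,df_3)$, each of which is zero by Jacobi via (\ref{eq_Schouten}).

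For the converse I would simply restrict ``Schouten bracket identically zero'' to triples $(df_1,df_2,df_3)$ and reread (\ref{eq_Schouten}) to obtain $\operatorname{Jac}_P(f_1,f_2,f_3)=0$ for all $f_i\in\mathfrak{A}(U)$, which is the Jacobi identity and thus promotes the almost Poisson anchor $P$ to a genuine partial Poisson anchor.

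The main obstacle I anticipate is the tensoriality/localization argument in the $c^{\infty}$-complete setting. Writing $\sigma=\sum g_i\,df_i$ is a local and finite statement, so in principle the expansion carries over verbatim from the finite-dimensional Poisson calculus of \cite{MaMo84}; what must be checked carefully is that the intermediate formula
\[
[\underline{\tilde P},\underline{\tilde P}](\sigma_1,\sigma_2,\sigma_3)
=\langle L_{\underline P\sigma_1}\sigma_2,\underline P\sigma_3\rangle
+\langle L_{\underline P\sigma_2}\sigma_3,\underline P\sigma_1\rangle
+\langle L_{\underline P\sigma_3}\sigma_1,\underline P\sigma_2\rangle
\]
quoted from \cite{CaPe1} really does specialize to (\ref{eq_Schouten}) on exact differentials in our partial context, i.e.\ that $L_{\underline P\,df}\,dg = d\{f,g\}_P$ when $f,g\in\mathfrak{A}(U)$ and that each Lie-derivative term lives in $\mathfrak{P}(U)$ so that the duality pairing makes sense. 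Once those identities are verified (using $\underline P(df)\in\mathfrak{X}(U)$ and Proposition~\ref{P_AUalgebra} to stay inside $\mathfrak{A}(U)$), the equivalence drops out.
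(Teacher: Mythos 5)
Your proof is correct and follows exactly the route the paper intends: it relies on the two ingredients the text establishes just before the statement, namely the tensoriality of $[\underline{\tilde P},\underline{\tilde P}]$ (its value at $x$ depends only on the values of the $\sigma_i$ at $x$, hence $\mathfrak{A}(U)$-trilinearity over the generators $df$ of $\mathfrak{P}(U)$) and the identity (\ref{eq_Schouten}) equating the bracket on exact differentials with the Jacobiator. This is the same argument as the proof the paper delegates to \cite{CaPe1}, Théorème~7.29, and the localization step you flag as the main obstacle is precisely what the quoted Lemme~7.27 takes care of.
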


De manière classique, si l'on se donne une variété de Poisson partielle $ \left( M,\mathfrak{A} ,\{.,.\}_{P} \right) $, toute fonction $f$ de $\mathfrak{A} $ est appelée \emph{Hamiltonien}\index{Hamiltonien} et,  
si les différentielles des fonctions de $\mathfrak{A}(U)$ séparent les points dans $TM$ le champ de vecteurs associé $X_{f}=P(df)$ est appelé \emph{champ de vecteurs Hamiltonien}\index{champ de vecteurs Hamiltonien}.\\
On a alors $\{f,g\}=X_{f}(g)$ ainsi que 
\begin{eqnarray}
\label{eq_CrochetChampsHamiltoniens}
\left[ X_f,X_g \right] = X_{\{f,g\}}
\end{eqnarray}
d'après \cite{NST14}, ce qui est équivalent à
\begin{equation}
\label{eq_Pdfdg}
P(d\{f,g\})=[P(df),P(dg)]. 
\end{equation}

\subsubsection{Morphismes et applications de Poisson}
\label{____MorphismesEtApplicationsDePoisson}

\begin{definition}
\label{D_MorphismeDePoisson} 
Soient deux structures de Poisson partielles $(T^\flat M_i,TM_i, P_i,\{\;,\;\}_{P_i})$  pour $i \in \{1,2\}$.  
Une application lisse $\phi:M_1 \to M_2$  est appelée \emph{morphisme de Poisson} si, pour tout $x$ de $M_1$, on a~:
\begin{itemize}
\item[\emph{\textbf{(PMorph~1)}}]
\hfil{}$\left( T_{\phi(x)}\phi \right) ^{\star } (T^\flat _{\phi(x)}M_1)\subset T^\flat_x M_1$\\ 
où  $\left( T_{\phi(x)}\phi \right) ^{\star }$  désigne l'adjoint de l'application tangente $T_x\phi$;
\item[\emph{\textbf{(PMorph~2)}}] 
$T_x \phi : T_x M_1 \to T_{\phi(x)} M_2$ est un morphisme de Poisson (linéaire) de $ \left( T^\flat_x M_1,T_x M_1, \left( P_1 \right) _x \right) $ dans $ \left( T_{\phi(x)} M_2,T_{\phi(x)} M_2, \left( P_2 \right) _{\phi(x)} \right) $.
\end{itemize}   
\end{definition}
Notons que l'application tangente $T_x \phi : T_x M_1 \to T_{\phi(x)} M_2$ est un morphisme de Poisson (linéaire) signifie précisément que, pour tout $x$ de $M_1$, on retrouve la relation \textbf{(PMorphL~2)} adapté à notre contexte
\begin{eqnarray}
\label{eq_PMorphL2_EntreFibres}
(P_2)_{\phi(x)}
=
T_x\phi\circ (P_1)_x\circ (T_x\phi )^{\star }.
\end{eqnarray}
\begin{definition}
\label{D_ApplicationDePoisson} 
Pour $i \in \{1,2\}$, soit $\{(\mathcal{E}_i (U_i), \{\;,\;\}_{U_i}),\;  \textrm{ pour tout ouvert } U_i  \textrm{ de } M_i\}$ un faisceau d'algèbres de Lie-Poisson sur $M_i$.\\
Une application $\phi: M_1\to M_2$ est une \emph{application de Poisson}\index{application de Poisson} si, pour tout ouvert $U_i$ de $M_i$ tel que $\phi(U_1)\subset U_2$, l'application induite $\phi^{\star }:\mathcal{C}^{\infty}(U_2)
\to \mathcal{C}^{\infty}(U_1) $,  définie par $\phi^{\star }(f):=f\circ \phi$, est telle que
\begin{itemize}
\item[\emph{\textbf{(PApp~1)}}]
$\hfil{}
\phi^{\star } \left( \mathcal{E}(U_2) \right) 
\subset \mathcal{E}(U_1)$
\item[\emph{\textbf{(PApp~2)}}]
$\hfil{}
\forall (f,g) \in  \mathcal{E}_2(U_2)^2,\;\{\phi^{\star }(f),\phi^{\star }(g)\}_{P_1}
=
\phi^{\star }(\{f,g\}_{P_2}).
$
\end{itemize}
\end{definition}

On obtient alors le résultat suivant établi dans le chapitre 7 de \cite{CaPe23}~:
\begin{theorem} 
\label{T_LienEntreMorphismesDePoissonEtApplicationsDePoisson} 
Soient deux structures de Poisson partielles $ \left( T^\flat M_i,TM_i, P_i,\{\;,\;\}_{P_i} \right) $,  pour $i \in \{1,2\}$, telles que  $T^\flat M_i$ est fermé dans $T^\prime M_i$ et soit $\phi:M_1\to M_2$ une application lisse.
\begin{enumerate}
\item 
Si $\phi$ est un morphisme de Poisson, alors $\phi$ est aussi une application de Poisson.
\item 
Supposons que l'on ait 
\begin{center}
$\forall y\in  \phi(M_1),\;
 \left( T_y M_2 \right) ^{\operatorname{ann}}  \cap T^\flat_y M_2=\{0\}$
\end{center}
où $(T_y M_2)^{\operatorname{ann}}$ est l'annulateur de  $T_y M_2$.\\
Si $\phi$ est une application de Poisson, alors $\phi$ est aussi un morphisme de Poisson.
\end{enumerate}
\end{theorem}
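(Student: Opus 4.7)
The plan is to prove both implications by reducing everything to local computations involving differentials of functions lying in $\mathfrak{A}$. The bracket definition $\{f,g\}_P=-\langle df,P(dg)\rangle$ is the hinge: it transports the fibrewise identity \textbf{(PMorph~2)} into the sheaf identity \textbf{(PApp~2)} via the duality pairing, while \textbf{(PMorph~1)} and \textbf{(PApp~1)} govern how cotangent vectors travel between $T^\flat M_1$ and $T^\flat M_2$.

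For (1), I would first check \textbf{(PApp~1)}: if $f\in\mathfrak{A}(U_2)$ and $\phi(U_1)\subset U_2$, then $f\circ\phi\in\mathfrak{A}(U_1)$. The higher-order chain rule, in the style already used in the proof of Proposition~\ref{P_AUalgebra}, expresses $d^k_x(f\circ\phi)$ as a sum of terms of the form $d^j_{\phi(x)}f\bigl(T_x^{(i_1)}\phi(\cdots),\dots,T_x^{(i_j)}\phi(\cdots)\bigr)$. Freezing every slot but the first and isolating its dependence yields either $(T_x\phi)^\star$ applied to a partial evaluation of $d^j_{\phi(x)}f$ (which lies in $T^\flat_{\phi(x)}M_2$ since $f\in\mathfrak{A}$), or the same after precomposition with an endomorphism of $T_{\phi(x)}M_2$ (Remark~\ref{R_AUEndomorphime}); hypothesis \textbf{(PMorph~1)} then places the result in $T^\flat_xM_1$. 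Condition \textbf{(PApp~2)} is the direct calculation
\[
\{\phi^\star f,\phi^\star g\}_{P_1}(x)=-\langle(T_x\phi)^\star d_yf,(P_1)_x(T_x\phi)^\star d_yg\rangle=-\langle d_yf,(P_2)_y d_yg\rangle=\phi^\star\{f,g\}_{P_2}(x),
\]
obtained from the definition of the adjoint and \textbf{(PMorph~2)}.

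For (2), the key observation is that every $\eta\in T^\flat_{\phi(x)}M_2$ is locally the differential at $y=\phi(x)$ of some element of $\mathfrak{A}$. In a chart $(V,\psi)$ around $y$ over which $T^\flat M_2$ trivialises, the bounded linear function $f:=\langle\eta,\psi(\cdot)-\psi(y)\rangle$ is smooth in the convenient sense, has constant differential $\eta$ and vanishing higher derivatives, so $f\in\mathfrak{A}(V)$. Applying \textbf{(PApp~1)} on a small $U_1\subset\phi^{-1}(V)$ around $x$ gives $d_x(f\circ\phi)=(T_x\phi)^\star\eta\in T^\flat_xM_1$, which is \textbf{(PMorph~1)}. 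Running the computation of (1) in reverse then yields
\[
\bigl\langle\xi,(P_2)_y\eta\bigr\rangle=\bigl\langle\xi,T_x\phi\circ(P_1)_x\circ(T_x\phi)^\star\eta\bigr\rangle
\]
for $\xi=d_yg$ with $g\in\mathfrak{A}$, hence for every $\xi\in T^\flat_yM_2$ by the same local realisation.

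The main obstacle is turning this pairing identity into the actual equality \textbf{(PMorph~2)}. The difference $v:=(P_2)_y\eta-T_x\phi\circ(P_1)_x\circ(T_x\phi)^\star\eta\in T_yM_2$ is annihilated by every element of $T^\flat_yM_2$, and we must conclude $v=0$. This is precisely where the hypothesis $(T_yM_2)^{\operatorname{ann}}\cap T^\flat_yM_2=\{0\}$ enters: it forces $T^\flat_yM_2$ to separate the relevant vectors of $T_yM_2$, so that $v=0$. The closedness of $T^\flat M_i$ inside $T'M_i$ is used throughout to keep $T^\flat$ a convenient subbundle on which the preceding local constructions remain valid. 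The combinatorics of \textbf{(PApp~1)} in (1) and the smooth realisation of arbitrary bounded linear forms via convenient calculus in (2) are technical but routine; the genuine content of the hypothesis in (2) is precisely this separation step.
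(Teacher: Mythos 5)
The paper itself contains no proof of this theorem — it is quoted from Chapter~7 of \cite{CaPe1} — so there is no internal argument to compare yours against; I assess your proposal on its own terms. Your overall architecture is the natural one, and part (2) is essentially complete: realising any $\eta\in T^\flat_yM_2$ as $d_yf$ for an affine $f\in\mathfrak{A}(V)$ in a chart trivialising $T^\flat M_2$, extracting \textbf{(PMorph~1)} from \textbf{(PApp~1)}, and then using the hypothesis (read, as you do, as the statement that $T^\flat_yM_2$ separates the relevant vectors of $T_yM_2$) to pass from the pairing identity to the operator identity \textbf{(PMorph~2)} is exactly the role that hypothesis must play.

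There is, however, a genuine gap in your verification of \textbf{(PApp~1)} in part (1). In the Fa\`a di Bruno expansion of $d^k_x(f\circ\phi)$ for $k\geq 2$, the distinguished variable $u_1$ may sit inside a higher derivative $d^i_x\phi(u_1,v_2,\dots,v_i)$ with $i\geq 2$; the resulting linear form on $T_xM_1$ is $\gamma\circ L$ with $\gamma\in T^\flat_{\phi(x)}M_2$ and $L:=d^i_x\phi(\,\cdot\,,v_2,\dots,v_i):T_xM_1\to T_{\phi(x)}M_2$. Your claim that this is $(T_x\phi)^\star$ applied after precomposition with an endomorphism of $T_{\phi(x)}M_2$ would require $L=A\circ T_x\phi$ for some endomorphism $A$, which fails in general (already for $\phi(x)=x^2$ at $x=0$ one has $T_0\phi=0$ but $d^2_0\phi\neq 0$); so neither \textbf{(PMorph~1)} nor Remark~\ref{R_AUEndomorphime} (which concerns endomorphisms of a \emph{single} tangent space) applies, and membership of $\gamma\circ L$ in $T^\flat_xM_1$ is unproved. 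The repair uses precisely the hypothesis you mention but never exploit, namely that $T^\flat M_1$ is ($c^\infty$-)closed in $T'M_1$: once one knows that $x\mapsto d_x(f\circ\phi)=(T_x\phi)^\star d_{\phi(x)}f$ is a section of $T^\flat M_1$ — which \emph{does} follow from \textbf{(PMorph~1)} and $f\in\mathfrak{A}(U_2)$ — every higher partial differential $d^k_x(f\circ\phi)(\,\cdot\,,u_2,\dots,u_k)$ is, in a trivialising chart, a Mackey-limit of difference quotients of elements of the closed subspace $\mathbb{M}_1^\flat$ of $\mathbb{M}_1'$, hence again lies in $T^\flat_xM_1$. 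Substituting this for your combinatorial argument closes the gap and makes part (1) correct.
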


\subsubsection{Exemples de variétés de Poisson partielles}
\label{___ExemplesVarietesDePoissonPartielles}

\begin{example}
\label{Ex_VarieteDePoissonDeDimensionFinie}
{\sf Variété de Poisson de dimension finie.}\\ 
Une variété de Poisson de dimension finie $ \left( M,\mathcal{C}^{\infty}(M),\{.,.\} \right) $ (cf. \cite{Marl83}) est un cas particulier de variété de Poisson partielle. En effet, à chaque fonction $f$ sur $M$ est associé un champ de vecteurs Hamiltonien\index{champ de vecteurs Hamiltonien} $X_{f}$. Puisque $T^{\flat}M=T^\star M$ est localement engendré par les différentielles de fonctions, on peut étendre l'application $df \mapsto X_{f}$ à un unique morphisme de fibrés antisymétrique $P:T^{\star}M\to TM$ tel que $P(df)=X_{f}$.
\end{example}

\begin{example}
\label{Ex_VarieteDeBanachPoisson}
{\sf Variétés de Banach-Poisson.}\\ 
Soit $M$ une variété de Banach. La notion de variété de Banach-Poisson a été définie et développée dans les articles \cite{OdRa03} et \cite{Rat11} où ces auteurs supposent qu'il existe un crochet de Poisson $\{.,.\}$ sur $C^{\infty}(M)$  tel qu'à chaque fonctionnelle linéaire $\xi$ sur $M$ est associée une section $\xi^{\prime}$ du bidual $T^{\prime\prime}M$ qui appartient en fait à $TM\subset T^{\prime\prime}M$.\\ 
Notons que la donnée d'un crochet de Poisson ne génère pas nécessairement une application linéaire du dual dans le bidual à cause de l'existence de \textit{queer operational tangent vectors} (cf. \cite{KrMi97}) et de \textit{queer Poisson brackets} (cf. \cite{BGT18}).
\end{example}

\begin{example}
\label{Ex_VarietesDeBanachSymplectiquesFaibles}
{\sf Variétés de Banach symplectiques faibles.}
Une \emph{variété symplectique faible}\index{variété!symplectique faible}\index{symplectique faible!variété} est une variété adaptée $M$ munie d'une $2$-forme fermée ${\omega}$ telle que le morphisme associé
\[
\begin{array}
[c]{cccc}
{\omega}^{\flat}:   & TM    & \to       & T^{\prime}M\\
                    & X     & \mapsto   & {\omega}(X,.)
\end{array}
\]
est injectif.
\end{example}

\begin{example}
\label{Ex_VarieteSymplectiqueFaible_linfty-times-l1-omega}
{\sf La variété symplectique faible $ \left( l^\infty \times l^1,\omega \right)$.}\\
Cet exemple s'inscrit dans le cadre des variétés de Banach faiblement symplectiques comme définies dans l'exemple \ref{Ex_VarietesDeBanachSymplectiquesFaibles} emprunté à \cite{OdRa08a}.\\
Puisque l'application $\omega^\flat_x$ est seulement injective, le crochet de Poisson  $\{f,g\}$ ne peut pas être défini pour tout couple $(f,g)$ d'éléments de $C^\infty(M)$. Afin de pouvoir définir le champ de vecteurs Hamiltonien associé à une fonction $f$ par $\iota_{X_f}\omega = df$, il est nécessaire que pour chaque $x$ de $M$, $df(x)$ appartienne à $\omega^\flat(T_x M)$.
\[
\mathfrak{A}  = \left\lbrace f \in C^\infty(M):\;
\forall x \in M,\;df(x) \in \omega^\flat(T_x M) \right\rbrace
\]
est alors une algèbre.\\
Si $(f,g) \in\mathfrak{A} ^2$, les champs de vecteurs Hamiltoniens  $X_f$ et $X_g$ existent et l'on peut alors définir un crochet de Poisson par 
\[
\{f,g\}_\omega = \omega  \left( X_f,X_g \right) . 
\] 
$\mathfrak{A} $ est alors une \emph{algèbre de Poisson}\index{Poisson!algèbre}, i.e. une algèbre relativement à la multiplication des fonctions et une algèbre de Lie relativement au crochet de Poisson, crochet vérifiant l'identité de Leibniz.\\
On considère l'espace de Banach $\ell^\infty \times \ell^1$\index{linftyxl1@$\ell^\infty \times \ell^1$} où
\[
\ell^\infty = \left\lbrace \mathbf{q}= \left( q_n \right) _{n \in \mathbb{N}}: \; \| \mathbf{q} \| _{\infty} = \displaystyle\sup_{n\in \mathbb{N}} |x_n| < +\infty \right\rbrace
\]
est l'espace de Banach des suites réelles bornées et où
\[
\ell^1 = \left\lbrace \mathbf{p}= \left( p_n \right) _{n \in \mathbb{N}}: \; \| \mathbf{p} \|_1 = \displaystyle\sum_{n=0}^{+\infty} |x_n| < +\infty \right\rbrace
\] 
est l'espace de Banach des suites réelles absolument convergentes.\\
Le crochet de dualité fortement non dégénéré 
\[
\forall (\mathbf{q},\mathbf{p}) \in \ell^\infty \times \ell^1,\;
\left\langle \mathbf{q},\mathbf{p} \right\rangle
= \displaystyle\sum_{n=0}^{+\infty} q_n p_n
\] 
correspond à l'isomorphisme d'espaces de Banach $ \left(  \ell^1 \right) ^\star = \ell^\infty$.\\
La forme symplectique faible $\omega$ est la forme canonique donnée par  
\[
\forall (\mathbf{q},\mathbf{q'},\mathbf{p},\mathbf{p'})
\in \left( \ell^\infty \right) ^2 
\times
\left( \ell^1 \right) ^2 ,\;
\omega  \left( (\mathbf{q},\mathbf{p}),(\mathbf{q'},\mathbf{p'}) \right)
= \left\langle \mathbf{q},\mathbf{p'} \right\rangle 
- \left\langle \mathbf{q'},\mathbf{p} \right\rangle
\]
On a alors
\[
\mathfrak{A}  = \left\lbrace f \in C^\infty  \left( \ell^\infty \times \ell^1 \right) :\;  \left( \dfrac{\partial f}{\partial q_n} \right) _{n \in \mathbb{N}} \in \ell^1 \right\rbrace
\]
et le crochet de Poisson est donné par
\[
\{f,g\}_\omega = \displaystyle\sum_{n=0}^{+\infty}
 \left( \dfrac{\partial f}{\partial q_k} \dfrac{\partial g}{\partial p_k}
 - \dfrac{\partial g}{\partial q_k} \dfrac{\partial f}{\partial f_k} \right) . 
\]
Ce crochet vérifie bien l'identité de Leibniz et est associé à un tenseur $2$ fois contravariant antisymétrique.
\end{example}

\begin{example}
\label{Ex_AlgebroideDeBanachLiePoisson}
{\sf Algébroïde de Banach Lie et structure de Poisson partielle.}\\
Un algébroïde de Banach-Lie est un fibré vectoriel de Banach $\pi:E\to M$ muni d'un morphisme $\rho:E\to TM$, appelé \emph{ancre}\index{ancre},
et d'un crochet de Lie $[.,.]_{E}$ qui est une application bilinéaire antisymétrique
$\Gamma(E)\times\Gamma(E)\to\Gamma(E)$ telle que
\[
[X,fY]_{E}=df(\rho(X))Y+f[X,Y]_{E}
\]
où $\Gamma(E)$ désigne l'ensemble des sections de $\pi:E\to M$ qui satisfont l'identité de Jacobi (cf. \cite{Ana11} et \cite{CaPe12}). \\
Si l'on note $\pi^\star:E^\star \to M$ le fibré dual du fibré $\pi:E\to M$, il existe un sous-fibré de Banach $T^{\flat}E^\star $ de $T^{\prime}E^\star $ défini comme suit.\\
Pour toute section (locale) $s: U\to E_{| U}$, on note $\Phi_s$ l'application linéaire sur $E^\star_{|U}$ définie par
\[
\Phi_s(\xi)=<\xi, s\circ \pi^\prime(\xi)>
\]
Ainsi, pour tout $\sigma\in E^\star$, $T^\flat_\sigma E^\star$ est engendré par l'ensemble
\[
\{d(\Phi_s+f\circ\pi^\star),\; 
s \in \Gamma \left( E_{| U} \right),\;
 f\in C^\infty(U),\; U \textrm{ voisinage ouvert de } \pi^\star(\xi) \}.
\]
Pour chaque ouvert $U$ de $M$, l'algèbre $\mathfrak{A}  \left( E^\star_{|U} \right) $ est aussi l'algèbre   des fonctions lisses $f:E^\star_{| U}\to\mathbb{R}$ dont la différentielle induit une section de $E^\star_{| U}$.
Soit $\mathfrak{A} _L(E^\star_U)$ l'ensemble des fonctions lisses $f:E^\star_{| U}:=E^\star_U\to\mathbb{R}$  dont la restriction à chaque fibre est linéaire. Il apparaît clairement que  $\mathfrak{A} _L(E^\star_{U})$ est un sous espace vectoriel  de $\mathfrak{A} (E^\star_U)$.\\
Par ailleurs, il existe un morphisme $P:T^{\flat}E^\star \to TE^\star $ défini de la manière suivante:\\
pour tout  $(\eta,\mathsf{w})\in T_\sigma^\flat E^\star$,  il existe un  germe en $x$ de fonction $f$ sur $M$  et un  germe en  $x$ de section $s$  de $E$ tels que (en coordonnées locales) 
\[
\left\{
\begin{array}
[c]{c}
\eta=d_\sigma(f\circ\pi^\prime))+\displaystyle\frac{\partial \Phi_s}{\partial \mathsf{x}}(\sigma)  \\
\mathsf{w}
=d_\sigma{\Phi_s}_{|V_\sigma E^\star}
\equiv s \left( \pi^\star(\sigma) \right)
\end{array}
.\right.
\]
où $V_\sigma E^\star$ est le fibré vertical de  $T_\sigma^\star E^\star$  et où $\Phi_s(\sigma)=<\sigma, s>$  et   $d_\sigma{\Phi_s}_{|V_\sigma E^\star}$ s'identifie à  $s\circ\pi_\star$.\\
Dans ces conditions, en coordonnées locales si $\sigma=(x,\xi)$,  le crochet de Lie s'écrit (cf.  \cite{CaPe12})
$$[s,s']_E(x)=ds'(\rho_x(s)-ds(\rho_x(s'))+C_x(s,s')$$
et  on a  (cf. \cite{CaPe23}, Chapitre~7, preuve du Théorème 7.18)
\begin{equation}
\label{eq_ValueP2}
\begin{matrix}
P_\sigma(\eta,\mathsf{w})&=\left(\rho_x(\mathsf{w}),-\eta\circ\rho_x(.) +<\xi, C_x(\mathsf{w},.)>\right)\hfill\\
&=\left(\rho_x(s), \Phi_{[s,.]_\rho}(\sigma )+ d_xf\circ \rho\right).\hfill{}
\end{matrix}
\end{equation}
\`A $P$ est associé un crochet $\{.,.\}_{P}$ sur $\mathfrak{A} (E^\star_U)\times\mathfrak{A} (E^\star_U)$ 
 dont la restriction à $\mathfrak{A} _L(E^\star_U)\times\mathfrak{A} _L(E^\star_U)$ est à valeurs dans $\mathfrak{A} _L(E^\star_U)$.  Puisque le crochet de Lie $[.,.]_{E}$ vérifie l'identité de Jacobi, ceci implique que l'application bilinéaire $\{.,.\}_P $ vérifie aussi l'identité de Jacobi. En prenant $U=M$,  on obtient ainsi une variété de Poisson $(E^\star,\mathfrak{A} (E^\star),\{.,.\}_{P})$ (pour plus de détails cf. \cite{CaPe23}, Chapitre~7). A noter que la structure canonique symplectique  faible sur $T^\star M$ est précisément la structure de Poisson sur $T^\star M$ associée à la structure naturelle d'algebroide de Lie sur $T^\star M$.
\end{example}

\begin{example}
\label{Ex_CrochetsSurDualAlgebreLieBanachNonNecessairementReflexive}
{\sf Crochets de Poisson sur le dual de  l'algèbre de Banach-Lie d'un groupe de Lie  non nécessairement réflexive.} \\
Cet exemple correspond à un cas particulier du Théorème 3.14 de \cite{Tum20}.\\
Soit $G$ un groupe de Lie-Banach dont l'algèbre de Lie $\mathfrak{g}$ n'est pas nécessairement réflexive et soit $\mathfrak{g}^\star$ le dual de $\mathfrak{g}$.\\
Afin de définir un crochet, il est nécessaire de se restreindre à l'ensemble $\mathfrak{A} $ des fonctions régulières, i.e. aux fonctions $f \in C^\infty \left( \mathfrak{g}^\star \right)$ dont les dérivées $df(\alpha): \mathfrak{g}^\star \to \mathbb{R}$ appartiennent au sous-espace $\mathfrak{g}$ de $\mathfrak{g}^{\star\star}$ pour tout $\alpha\in \mathfrak{g}$.\\ 
On peut alors définir divers crochets classiques sur cet espace :
\begin{enumerate}
\item 
Remarquons  d'une part  que $\mathfrak{g}^\star\times \mathfrak{g}$ est un sous fibré fermé de $ \mathfrak{g}^\star\times \mathfrak{g}^{\star\star}=T^\star\mathfrak{g}^\star$  et l'application $ad^\star$ définit un morphism $P:\mathfrak{g}^\star\times \mathfrak{g} \to   \mathfrak{g}^\star\times \mathfrak{g}^{\star}$ par 
$P_\alpha(X)=\operatorname{ad}^\star_X(\alpha)$. Notons d'autre part que si $\left\langle .,. \right\rangle$ est l'appariement naturel $\mathfrak{g}^\star \times \mathfrak{g} \to \mathbb{R}$, on a
\[
 \left\langle \alpha,[X(\alpha),Y(\alpha)] \right\rangle 
 =\left\langle \operatorname{ad}_X^\star(\alpha),Y(\alpha) \right\rangle=\left\langle P_\alpha(X),Y(\alpha) \right\rangle
 \]
pour toutes sections  $X$ et $Y$  de $\mathfrak{g}^\star\times \mathfrak{g}$ en particulier, $P$ est une quasi ancre de Poisson. Maintenant, si $f$ et $g$ appartiennent  à $\mathfrak{A}$ alors $df $ et $dg$ sont des sections de  $\mathfrak{g}^\star \times \mathfrak{g}$ et par suite est associé le crochet:
\index{Lie-Poisson!crochet}\index{structure!de Lie-Poisson}
\[
\{f,g\}_{\operatorname{LP}}(\alpha)
= \left\langle \alpha,[df(\alpha),dg(\alpha)] \right\rangle. 
\]
Ce crochet satisfait à la fois l'identité de Leibniz et celle de Jacobi. Ainsi, $ \left( \mathfrak{g}^\star,\mathfrak{A} ,\{.,.\}_{\operatorname{LP}} \right) $ est une variété de Poisson partielle.  \\
 En fait, par construction, le crochet de Poisson $\{.,;\}_{\operatorname{LP}}$ n'est rien d'autre que le crochet de Poisson sur $\mathfrak{g}^\star$ associé à la structure d'algébroide de Lie sur $T\mathfrak{g}^\star$ defini par le crochet de Lie sur $\mathfrak{g}$
 \footnote{
Cette structure de Poisson est  linéaire, i.e. le crochet de deux fonctionnelles linéaire est encore une fonctionnelle linéaire.\\
On rencontre ce type de structures dans le processus de réduction d'un système Hamiltonien où le crochet et l'Hamiltonien transformés dépendent d'un plus petit nombre de variables, e.g. pour le crochet de Lie-Poisson pour un solide ou pour l'équation d'Euler d'un fluide idéal en 2D.} (cf. Example \ref{Ex_AlgebroideDeBanachLiePoisson}).\\
Notons que si $G$ est un groupe de Lie de dimension finie, cette structure de Poisson n'est rien d'autre que la structure classique de Kirillow-Kostant-Souriau\index{structure!de Kirillow-Kostant-Souriau} sur $\mathfrak{g}^\star$.
\item
On associe à tout élément fixé $\alpha_0$ de $\mathfrak{g}^\star$ le crochet\index{structure!de Poisson constante} défini par
\[\forall (f,g) \in\mathfrak{A} ^2,
\forall \alpha \in \mathfrak{g}^\star,\;
\{f,g\}_{\alpha_0}(\alpha)= \left\langle \alpha_0,[df(\alpha),dg(\alpha)] \right\rangle 
\]
qui satisfait lui aussi les identités de Leibniz et de Jacobi. La structure de Poisson est appelée structure de Poisson \emph{constante} ou \emph{figée} en $\alpha_0$.
\item
Soit $\omega \in \Lambda^2 \mathfrak{g}^\star$ une $2$-forme antisymétrique sur $\mathfrak{g}^\star$.\\ 
Le crochet $\{f,g\}^{\omega}$ défini par
\[\forall (f,g) \in\mathfrak{A} ^2,
\forall \alpha \in \mathfrak{g}^\star,\;
\{f,g\}^{\omega}
= \left\langle \alpha,[df(\alpha),dg(\alpha)] \right\rangle 
+ \omega \left( df(\alpha),dg(\alpha) \right)
\]
est un crochet de Poisson si et seulement si $\omega$ est un $2$-cocycle\index{cocycle}, i.e. vérifie, pour tout triplet $(x,y,z)$ d'éléments de $\mathfrak{g}$, la condition
\[
\omega \left( [x,y],z \right)
+ \omega \left( [y,z],x \right)
+ \omega \left( [z,x],y \right)
=0.
\]
Ce crochet est appelé \emph{crochet de Lie-Poisson modifié par $\omega$} (cf. \cite{Ben10}, 2.1.4 pour le cas de la dimension finie).\\
De plus, cette structure est isomorphe à la structure de Lie-Poisson si et seulement si $\omega$ est un cobord\index{cobord}, i.e. il existe $\beta \in \mathfrak{g}^\star$ tel que
\[
\forall (x,y) \in {\mathfrak{g}^\star}^2,\;
\omega(x,y) = \left\langle \beta,[x,y] \right\rangle.
\]
La translation par $\beta$ fournit l'isomorphisme entre $\left( \mathfrak{g}^\star,\{.,.\}_{\operatorname{LP}} \right) $ et $ \left( \mathfrak{g}^\star,\{.,.\}^\omega \right) $. 
\end{enumerate}
\end{example}

\begin{example}
\label{Ex_CrochetsSurPreDualAlgebreLieBanachNonNecessairementReflexive}
{\sf Crochets de Poisson sur le pré-dual de  l'algèbre de Banach-Lie d'un groupe de Lie  non nécessairement réflexive.}(cf. \cite{OdRa03}).\\
Soit $G$ un groupe de Lie-Banach dont l'algèbre de Lie $\mathfrak{g}$ n'est pas nécessairement réflexive et soit $\mathfrak{g}^\star$ le dual de $\mathfrak{g}$. Supposons que $\mathfrak{g}$ possède un pré-dual $\mathfrak{g}_\star $\footnote{i.e. le dual de $\mathfrak{g}_\star$ est égal à $\mathfrak{g}$.}. Puisque $\mathfrak{g}_\star\subset(\mathfrak{g}_\star)^{\star\star}=\mathfrak{g}^\star$ la condition suivante a un sens~:
\[
\forall g\in G, \; 
\operatorname{Ad}^\star_g \left( \mathfrak{g}_\star \right)
\subset \mathfrak{g}_\star.
\] 
Dans ce cas, le fibré $\mathfrak{g}_\star\times \mathfrak{g}$ est le fibre cotangent  à $\mathfrak{g}_\star$ et par suite 
$\operatorname{ad}^\star_{X(\alpha)} $ appartient à $\mathfrak{g}_\star$ pour toute section $X$ de ce fibré. Il en résulte que, $P_\alpha(X)= \operatorname{ad}^\star_{X(\alpha)} $ définit une ancre de Poisson  partielle sur $\mathfrak{g}_\star$.
\footnote{Lorsque qu'on  a une ancre de Poisson partielle $P:T^\star M\to TM$, on parlera souvent  simplement d'ancre de Poisson.}
\end{example}

\begin{example}
\label{Ex_LimiteDirecteDeStructuresPartiellesDePoissonCompatiblesSurDesVarietesDEBanach}
{\sf Limite directe de structures partielles de Poisson sur des variétés de Banach.}\\
Cet exemple est fondé sur la section \textit{Direct limits of partial Poisson Banach manifolds} du livre \cite{CaPe23}, Chapitre~7.\\
On considère une suite directe $\left( T^\flat M_i,P_i,\{.,.\}_i \right) _{i \in \mathbb{N}}$ de structures de Banach-Poisson partielles~:\\
$\left( M_i \right) _{i \in \mathbb{N}}$ est une croissante de variétés de Banach lisses où $M_i$ est modelée sur l'espace de Banach $\mathbb{M}_i$ avec $\mathbb{M}_i$ sous-espace de Banach supplémenté dans $\mathbb{M}_{i+1}$ et où $ \left( M_i,\varepsilon_i^{i+1} \right) $ est une sous-variété faible de $M_{i+1}$, $\varepsilon_i^{i+1}:M_i \to M_{i+1}$ désignant l'injection canonique. On a de plus les propriétés suivantes~:
\begin{enumerate}
\item[\emph{\textbf{(1)}}]
$T^{\star}\varepsilon_{i}^{i+1}(T^{\flat}M_{i+1})\subset T^{\flat}M_{i}$;
\item[\emph{\textbf{(2)}}]
$P_{i+1}=T\varepsilon_{i}^{i+1}\circ P_{i}\circ T^{\star}\varepsilon_{i}^{i+1}$;
\item[\emph{\textbf{(3)}}]
Au voisinage de chaque point $x\in M$, il existe une suite de cartes locales $\left( U_{i},\phi_{i}\right)  _{i\in\mathbb{N}}$ telle que
$ \left( U=\underrightarrow{\lim}(U_{i}),\phi=\underrightarrow{\lim}(\phi_{i}) \right) $ est une carte locale en $x$ dans $M$, de telle manière que les cartes $ \left( U_{i},\phi_{i} \right) $ et $ \left( U_{i+1},\phi_{i+1} \right) $ sont compatible avec la propriété \emph{\textbf{(1)}}.
\end{enumerate}
Il existe alors un sous fibré faible $p^{\flat}:T^{\flat} M\to M$ de $p'_M : T'M \to M$, où $M = \underrightarrow{\lim} M_i$ est une variété adaptée, et un morphisme $P:T^{\flat
}M\to TM$ tel que $(M,\mathfrak{A} _P(M),\{.,.\}_{P})$ est une structure de Poisson partielle.\\
Si, pour $i$ entier naturel quelconque, ${\varepsilon}_{i}:M_{i}\to M$ est l'injection canonique alors, qui plus est, ${\varepsilon}_{i}$ est un morphisme de Poisson de $ \left( M_{i},\mathfrak{A} (M_{i}),\{.,.\}_{P_{i}} \right) $ dans $ \left( M,\mathfrak{A} (M),\{.,.\}_{P} \right) $ et
\[
\mathfrak{A} _P(M)=\underrightarrow{\lim}\mathfrak{A} _{P_i}(M_i)
\]
\[
\{.,.\}_{P}=\underrightarrow{\lim}\{.,.\}_{P_{i}}.
\]  
\end{example}

\begin{example}
\label{Ex_StructureDePoissonPartielleSurPinftyM}
{\sf Structure de Poisson partielle sur l'espace $P^\infty(M)$.}\\
Dans cette exemple, on se réfère essentiellement à \cite{Lot18}.\\
On considère une variété Riemannienne fermée connexe de dimension finie $(M,g)$ et on note $\mathsf{dvol}_M$ la forme volume sur $M$ associée à $g$. On considère de plus l'ensemble\footnote{Cet ensemble est aussi utilisé en Géométrie de l'information où il fournit un cadre à la version infinie dimensionnelle de la \emph{matrice d'information de Fisher} (cf. \cite{Laf88}).} (cf. \cite{Lot18}) 
\[
P^\infty(M)=\{ \rho\;\mathsf{dvol}\;:\; \;\rho\in C^\infty(M),\; \rho>0,\; \displaystyle\int_M \rho\;\mathsf{dvol}_M=1\}.
\]
$P^\infty(M)$ peut être munie d'une structure de variété adaptée (\cite{Lot18}).\\
D'autre part, à toute fonction $\phi\in C^\infty(M)$, on associe la fonction $F_\phi\in C^\infty (P^\infty(M))$ donnée par~:
\[
F_\phi(\rho\;\mathsf{dvol}_M)
=\displaystyle \int_M \phi\rho\;\mathsf{dvol}_M.
\]
Le gradient de  $ \phi\in C^\infty(M)$  est noté $\nabla\phi$ et est associé au champ de vecteurs $V_\phi$ sur $P^\infty(M)$ défini par~:
 \begin{equation}
\label{eq_Xphi}
V_\phi(F)(\rho\;\mathsf{dvol}_M)
=\displaystyle\frac{d}{d\tau}_{| \tau=0} F(\rho\;\mathsf{dvol}_M -\tau \mathsf{div}(\rho\nabla\phi)\mathsf{dvol}_M)
\end{equation}
pour tout $F\in C^\infty(P^\infty(M))$.\\
Par ailleurs, on définit une métrique $\bar{g}$ sur $P^\infty(M)$ (cf. \cite{Ot01}) par
\[
\bar{g}_{\rho\;\mathsf{dvol}_M}(V_\phi,V_\psi)
=\displaystyle\int_M g(\nabla\phi,\nabla\psi)\rho\;\mathsf{dvol}_M.
\]
Si $T'P^\infty(M)$ désigne le fibré cotangent cinématique de $P^\infty(M)$, l'application $X \mapsto \bar{g}(X,.)$ est un morphisme injectif $\bar{g}^\flat:TP^\infty(M)\to T'P^\infty(M)$. Il s'ensuit que l'image $T^\flat P^\infty(M)$ 
de $\bar{g}^\flat$ est un sous-fibré de $T'P^\infty(M)$.  En particulier, pour toute fonction $\phi\in C^\infty(M)$, la différentielle $dF_\phi$ est la restriction à $T_{\rho\mathsf{dvol}_M}P^\infty(M)$  de l'application linéaire $\phi\mapsto \displaystyle\int_M\phi\;\mathsf{dvol}_M$ et ainsi $dF_\phi$ est une forme différentielle sur $P^\infty(M)$. De plus, $dF_\phi$ est une section de $T^\flat P^\infty(M)$ pour tout $\phi\in C^\infty(M)$.\\
On note $\mathcal{A}(P^\infty(M))$ l'espace vectoriel  $\{F_\phi,\;\phi\in C^\infty (M)\}$ et on le munit d'une structure d'algèbre pour le produit donné par  
$F_\phi.F_\psi=F_{\phi \psi}$.\\
Ainsi
$ 
\begin{array}[c]{ccc}
C^\infty(M) 	& \to 		& \mathcal{A}(M)		\\
\phi     		& \mapsto 	& F_\phi
\end{array}
$
 est un morphisme d'algèbres (cf. \cite{Lot18}). \\
Supposons maintenant, que la variété $M$ soit munie d'une structure de Poisson définie par un bivecteur $\Lambda$. Cette structure définit sur $C^\infty M$ un crochet de Poisson $\{.,.\}_\Lambda$. On obtient alors sur $\mathcal{A}(P^\infty(M))$ une algèbre de Poisson relativement au crochet de Poisson défini par~: 
\begin{equation}\label{eq_PoissonbracketPinfty}
\{F_\phi,F_\psi\}_{P^\infty(M)}(\rho)
=\displaystyle\int_M \{\phi,\psi\}_\Lambda \;\rho\;\mathsf{dvol}_M
\end{equation}
où $\{\phi,\psi\}_\Lambda$ est le crochet de Poisson sur $C^\infty(M)$ (cf. \cite{Lot18}). Notons que l'application $\phi \mapsto F_\phi $ est un morphisme d'algèbre de Poisson de  $C^\infty(M )$ dans $\mathcal{A}(P^\infty(M))$.
\end{example}

\subsubsection{Distribution caractéristique}
\label{____DistributionCaracteristique}

\textbf{Feuilletages et distributions.}\\
\label{____FeuilletagesEtDistributions}
Un \emph{feuilletage}\index{feuilletage} d'une variété adaptée $M$ est une partition de $M$ en sous-variétés faibles, appelées \emph{feuilles}\index{feuille} du feuilletage.\\
D'un point de vue infinitésimal, pour une variété feuilletée $M$,  on désigne pour tout point $x$ de $M$ par $\mathcal{D}_x$ l'espace tangent à la feuille passant par $x$.  Le feuilletage est dit  \emph{régulier}\index{feuilletage!régulier} si $\mathcal{D}:=\displaystyle\cup_{x\in M} \mathcal{D}_x$ est un sous fibré fermé de $TM$. Sinon le feuilletage est dit \emph{singulier}\index{feuilletage!singulier}. Pour un feuilletage régulier,  pour tout couple de champs de vecteurs locaux tangents à  $\mathcal{D}$, leur crochet de Lie est encore tangent à $\mathcal{D}$.\\

Une \emph{distribution}\index{distribution} sur une variété adaptée $M$ est une application $\mathcal{D}: x \mapsto \mathcal{D}_x\subset T_xM$ sur $M$, où $\mathcal{D}_x$\index{Dmathcal@$\mathcal{D}_x$ (distribution)} est un sous-espace vectoriel de $T_xM$.
Une distribution $\mathcal{D}$ est dite
\begin{enumerate}
\item
\emph{lisse}\index{distribution!lisse}\index{lisse!distribution} si, pour tout $x\in M$, $\mathcal{D}_x$ est engendrée par les champs de vecteurs locaux qui sont tangents à  $\mathcal{D}$ au voisinage de $x$;
\item
\emph{integrable}\index{distribution!integrable}\index{integrable!distribution} si, pour tout $x\in M$, il existe une sous-variété immergée faible  $f:L\to M$ telle que
\[
\forall z\in L,\; T_z f( T_z L )= \mathcal{D}_{f(z)};
\]
\item
\emph{involutive}\index{distribution!involutive}\index{involutive!distribution} si pour tous champs de vecteurs locaux $X$ et $Y$ sur $M$ qui sont tangents à $\mathcal{D}$, le crochet de Lie $[X,Y]$ est aussi tangent à $\mathcal{D}$.\\
\end{enumerate}

\textbf{Distribution caractéristique.}\\
\label{____DistributionCaracteristique}

On associe à une structure de Poisson partielle $ \left( T^\flat M, TM, P \right) $, la famille de sous-espaces vectoriels 
\[
\{\mathcal{D}_x=P(T_x^\flat M)\subset T_xM\}_{x\in M}.
\]
La réunion $\mathcal{D}:=\displaystyle\bigcup_{x\in M} \mathcal{D}_x$ est une distribution sur $M$ appelée \emph{distribution caractéristique}\index{Dmathcal@$\mathcal{D}$ (distribution)}\index{distribution!caractéristique}.\\
En dimension finie, quand $T^{\flat} M = T^{\prime}M$, il est bien connu que  la distribution caractéristique donne naissance à un feuilletage de Stefan-Sussman\index{feuilletage!de Stefan-Sussman} \index{Stefan-Sussman!feuilletage}\index{feuilletage} et que chaque \emph{feuille}\index{feuille} est une sous-variété symplectique immergée (cf. \cite{Wei94}).\\ 
Dans le cadre adapté, ceci n'est plus vrai en général. En fait, le problème apparaît dès le cadre de Banach (cf. Remark \ref {R_Feuille}). On a cependant les conditions suffisantes suivantes pour lesquelles un tel résultat est vrai (cf. \cite{PeCa19}).

\begin{theorem}
\label{T_FeuilletageSurUneVarieteDeBanachPoissonPartielle} 
Soit $ \left( T^\flat M, TM, P\right) $ une variété de Poisson partielle pour laquelle le noyau de $P$ est scindé dans chaque fibre $T^{\flat}_{x}M$ de $T^{\flat}M$ et où $P(T^{\flat}M)$ est une distribution fermée. On  alors~:
\begin{enumerate}
\item 
$\mathcal{D}=P(T^{\flat}M)$ est intégrable et le feuilletage défini par $\mathcal{D}$ est un feuilletage  symplectique faible, i.e. tel que sur chaque feuille $N$ de ce feuilletage, on a une 2-forme fermée  non dégénérée $\omega_N$ sur $TN$ telle que pour tout $u$ et tout $v$ de $T_xN$, on a $\omega_N(u, v)= < \alpha, P(\beta)>$ pour tout couple $(\alpha, \beta)$ d'éléments de $T_x^\flat M$ où $P(\alpha)=u$ et $P(\beta)=v$.
\item 
Sur chaque feuille maximale $N$, on considère la forme naturelle faible $\omega_N$. On note $\mathcal{E}_{\omega_N} (V)$ le sous-ensemble des fonctions $f\in C^\infty(V)$ telles que $df$ appartienne à $\omega_N^\flat(TV)$.  La restriction $f_N$ à $U\cap N$  de n'importe quelle fonction $f\in \mathcal{E}(U)$ appartient à $\mathcal{E}_{\omega_N}(U\cap N)$ et on a, pour tout $f$ et tout $g$ de $\mathcal{E}(U)$,
\[
{\{f_{|N},g_{|N}\}_{P}}_{|N}=\{f_{N},g_{N}\}_{\omega_{N}}.
\]
\end{enumerate}
\end{theorem}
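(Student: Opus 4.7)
La stratégie consiste à passer de la construction pointuelle donnée par la Remarque \ref{R_OP} à une structure lisse sur chaque feuille, puis à vérifier la compatibilité des crochets. Sous les hypothèses de scindement de $\ker P_x$ et de fermeture de $P_x(T^\flat_x M)$ dans $T_x M$, on est exactement dans le cadre de cette remarque, qui fournira, fibre à fibre, une forme bilinéaire bornée $\omega_{N,x}$ sur $\mathcal{D}_x = P_x(T^\flat_x M)$ par la formule $\omega_{N,x}(P_x(\alpha), P_x(\beta)) = \langle \alpha, P_x(\beta)\rangle$. Sa bien-définition découlera de l'antisymétrie de $P$ : si $P(\alpha)=P(\alpha')$ alors $\alpha-\alpha'\in\ker P$, et donc $\langle \alpha-\alpha',P(\beta)\rangle = -\langle\beta,P(\alpha-\alpha')\rangle = 0$. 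La non-dégénérescence pointuelle s'obtient par le même argument d'antisymétrie.

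Le premier point à établir est l'intégrabilité de $\mathcal{D}$. L'involutivité des champs hamiltoniens $X_f = P(df)$ résulte immédiatement de (\ref{eq_CrochetChampsHamiltoniens}), $[X_f,X_g]=X_{\{f,g\}_P}$, et ces champs engendrent $\mathcal{D}$ puisque $T^\flat M$ est localement engendré par les différentielles des éléments de $\mathfrak{A}(U)$ (faisceau $\mathfrak{P}_M$). Le cœur technique, et sans doute l'obstacle principal, sera le passage de l'involutivité à l'intégrabilité dans le cadre adapté où les théorèmes classiques de type Frobenius ou Stefan--Sussman ne sont pas disponibles tels quels : on exploitera précisément le scindement de $\ker P$ et la fermeture de $\mathcal{D}$ pour construire des cartes redressantes locales, en s'appuyant sur la variante établie dans \cite{PeCa19}. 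Les feuilles maximales $N$ obtenues seront alors des sous-variétés faibles immergées avec $T_z N = \mathcal{D}_{f(z)}$, et la famille pointuelle $(\omega_{N,x})$ s'identifiera à une $2$-forme lisse $\omega_N$ sur $N$ puisque sa valeur sur les couples $(X_f,X_g)$ s'écrit globalement $\langle df,P(dg)\rangle$.

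Pour la fermeture $d\omega_N = 0$, j'invoquerai le Théorème \ref{T_CaracterisationStructurePartielleDePoisson} : le crochet de Schouten $[\underline{\tilde P},\underline{\tilde P}]$ est identiquement nul, traduisant l'identité de Jacobi pour $\{.,.\}_P$. En combinant la formule (\ref{eq_Schouten}) avec l'expression cartanienne de $d\omega_N(X_f,X_g,X_h)$ et l'identité $\omega_N(X_f,X_g) = \langle df,P(dg)\rangle$, j'obtiendrai $d\omega_N(X_f,X_g,X_h)=0$ pour tout triplet de champs hamiltoniens ; la densité de ces champs dans $TN$ entraînera $d\omega_N=0$.

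Pour la seconde assertion, je fixerai $f\in\mathcal{E}(U)$ et $x\in U\cap N$, et je vérifierai que pour tout $v=P(\beta)\in T_x N$, on a $\omega_N(X_f(x),v) = \langle df(x),P(\beta)\rangle = df_{|N}(x)(v)$. Ceci montrera simultanément que $df_{|N}(x)$ appartient à $\omega_N^\flat(T_x N)$, donc que $f_N\in\mathcal{E}_{\omega_N}(U\cap N)$, et que le champ hamiltonien symplectique associé à $f_N$ pour $\omega_N$ coïncide avec la restriction de $X_f$ à $N$. L'égalité ${\{f_{|N},g_{|N}\}_P}_{|N}=\{f_N,g_N\}_{\omega_N}$ se déduira alors par un calcul direct à partir des définitions respectives des deux crochets, aux conventions de signes près.
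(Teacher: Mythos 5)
Your outline is consistent with what the paper itself does: the paper gives no proof of this theorem at all, but simply refers the reader to \cite{PeCa19}, and your sketch follows the same route by deferring the genuinely hard step --- passing from involutivity (which does follow from $[X_f,X_g]=X_{\{f,g\}_P}$ and the fact that the $X_f$ generate $\mathcal{D}$) to integrability in a setting with no Frobenius or Stefan--Sussmann theorem --- to that same reference. The remaining steps (the quotient form on $\mathcal{D}_x$ via the Remarque \ref{R_OP}, closedness of $\omega_N$ from the vanishing of the Schouten bracket of $\underline{\tilde{P}}$ together with the Cartan formula, and the identification of the $\omega_N$-Hamiltonian field of $f_N$ with ${X_f}_{|N}$ for point 2) are the standard ones.

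Two places in your sketch are too quick, and they are exactly where the hypotheses of the theorem are actually consumed. First, the non-degeneracy of $\omega_N$ does \emph{not} follow ``by the same antisymmetry argument'': if $\left\langle \alpha,P(\beta)\right\rangle=0$ for all $\beta$, antisymmetry only tells you that $u=P(\alpha)$ is annihilated by every element of $T_x^{\flat}M$, and this forces $u=0$ only if the duality pairing restricted to $T_x^{\flat}M\times\mathcal{D}_x$ is non-degenerate on the right --- which is precisely what the split-kernel and closed-image hypotheses guarantee (via the Remarque \ref{R_OP}, where in the Banach case the open mapping theorem makes $\hat{P}_x$ an isomorphism; in the general convenient case this must be assumed, cf.\ the condition \textbf{FSI} of the D\'efinition \ref{D_LocalSymplectique}). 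Second, smoothness of $\omega_N$ along the leaf does not follow merely from the identity $\omega_N(X_f,X_g)=\left\langle df,P(dg)\right\rangle$ on pairs of Hamiltonian fields; one needs the pointwise isomorphisms $\hat{P}_x$ to depend smoothly on $x$, i.e.\ a local trivialisation of $\ker P$ and of $\mathcal{D}$, which is again part of what \cite{PeCa19} establishes. Since the paper hides all of this behind a citation, your proposal is acceptable as a reconstruction of the strategy, but these two points should be named as the ones that genuinely require the hypotheses rather than being dispatched by antisymmetry.
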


\begin{remark}
\label{R_Feuille}
En général, dans le cadre adapté, la distribution caractéristique n'est pas intégrable. Notons que  dans le cadre de Banach, lorsque le noyau de $P$ est scindé mais  si la distribution caractéristique $\mathcal{D}$ n'est pas fermée  elle peut être intégrable (cf. Exemple \ref{Ex_DistributionsCaracteristiquesSurDualAlgebreLieBanachNonNecessairementReflexive} et \ref{Ex_DistibutionCaractéristiqueSurPeDualAlgebreLieBanachNonNecessairementReflexive}). Comme ces exemples sont des cas particuliers,  nous pensons que, en général,  la distribution caractéristique n'est pas intégrable même si  le noyau de $P$ est scindé car, en dehors du cadre de Banach,  il n'existe pas de théorème d'existence de flot ou des fonctions implicites. Malheureusement, nous n'avons  pas d'exemple pouvant étayer une telle conjecture.
\end{remark}

Cependant, plus généralement, on a aussi:

\begin{theorem}
\label{T_LimiteDirecteDeVarietesDePoisson}
On considère une suite directe $\left( T^\flat M_i,P_i,\{.,.\}_i \right) _{i \in \mathbb{N}}$ de structures de Banach-Poisson partielles\footnote{cf. Exemple~\ref{Ex_LimiteDirecteDeStructuresPartiellesDePoissonCompatiblesSurDesVarietesDEBanach}.} ayant les propriétés suivantes
\begin{itemize}
\item[(1)] Chaque variété de Poisson partielle  $\left( T^\flat M_i,P_i,\{.,.\}_i \right) _{i \in \mathbb{N}}$ vérifie les hypothèses du Théorème \ref{T_FeuilletageSurUneVarieteDeBanachPoissonPartielle} pour tout entier $i$.
\item[(2)] pour tout  $x\in M$,il existe un voisinage ouvert $U$  de $x=\underrightarrow{\lim}x_i$ et une suite ascendante de voisinage ouvert $U_i$ simplement connexe de $x_i$ tels
$U=\underrightarrow{\lim}U_{i},\;$ ${T^\flat M_i}_{|U_i}$ est  supplémenté dans $ {T^\flat M_{i+1}}_{|U_i}$ 
\end{itemize}
Alors la distribution caractéristique $\mathcal{D}=P(T^\flat M)$ est intégrable et la  feuille $N$ contenant $x$ est la limite directe de la suite des feuilles $N_i$ contenant $x_i$. De plus, $L$ possède une structure de variété symplectique faible définie par une forme symplectique $\omega=\underrightarrow{\lim}\omega_i$ si $\omega_i$ est la forme symplectique faible canonique sur $N_i$.
\end{theorem}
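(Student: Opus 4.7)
L'approche consiste � construire la feuille $N$ passant par $x$ comme limite directe des feuilles $N_i$ passant par $x_i$ dans les vari�t�s de Banach $M_i$, puis � v�rifier que cet objet limite est effectivement une sous-vari�t� faible int�grale de $\mathcal{D}$ et h�rite d'une structure faiblement symplectique.

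Premi�rement, en appliquant l'hypoth�se (1) et le Th�or�me \ref{T_FeuilletageSurUneVarieteDeBanachPoissonPartielle} � chaque $M_i$, on dispose d'un feuilletage caract�ristique dont la feuille $N_i$ passant par $x_i$ est une sous-vari�t� faiblement symplectique immerg�e de $M_i$, munie d'une $2$-forme ferm�e non d�g�n�r�e $\omega_i$ telle que $\omega_i(P_i\alpha, P_i\beta) = \langle \alpha, P_i \beta \rangle$. Les conditions de compatibilit� \textbf{(1)} et \textbf{(2)} de l'Exemple \ref{Ex_LimiteDirecteDeStructuresPartiellesDePoissonCompatiblesSurDesVarietesDEBanach} entra�nent que $T\varepsilon_i^{i+1}(\mathcal{D}_i) \subset \mathcal{D}_{i+1}$: en effet, pour $P_i(\alpha) \in \mathcal{D}_{i,y}$, le rel�vement de $\alpha$ � un �l�ment $\tilde\alpha \in T^\flat_y M_{i+1}$ au moyen de la propri�t� de suppl�mentation donn�e par (2), combin� avec $P_{i+1} = T\varepsilon_i^{i+1} \circ P_i \circ T^\star\varepsilon_i^{i+1}$, donne $T\varepsilon_i^{i+1}(P_i\alpha) = P_{i+1}(\tilde\alpha)$.

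Deuxi�mement, je montre que $N_i$ s'injecte naturellement dans $N_{i+1}$ comme sous-vari�t� faible immerg�e. C'est ici qu'intervient l'hypoth�se (2) de mani�re essentielle: sur un voisinage simplement connexe $U_i$ de $x_i$, le fait que ${T^\flat M_i}_{|U_i}$ soit suppl�ment� dans ${T^\flat M_{i+1}}_{|U_i}$ permet de construire des cartes feuillet�es de $U_i$ dans $U_{i+1}$ qui envoient la plaque de $N_i$ en $x_i$ dans celle de $N_{i+1}$ en $x_{i+1}$. La connexit� par arcs lisses des feuilles (via les champs hamiltoniens) et la simple connexit� des $U_i$ permettent de recoller ces inclusions locales en une injection globale lisse $N_i \hookrightarrow N_{i+1}$, de diff�rentielle en $x_i$ �gale � $T\varepsilon_i^{i+1}|_{\mathcal{D}_{i,x_i}}$.

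Troisi�mement, je pose $N = \underrightarrow{\lim} N_i$, muni de la topologie et de la structure diff�rentielle limite directe; $N$ est alors une vari�t� adapt�e. L'injection $N \to M = \underrightarrow{\lim} M_i$, obtenue comme limite directe des immersions $N_i \to M_i$, est une sous-vari�t� faible immerg�e dont l'espace tangent en un point $y = \underrightarrow{\lim} y_i$ s'identifie � $\underrightarrow{\lim} T_{y_i} N_i = \underrightarrow{\lim} \mathcal{D}_{i,y_i} = \mathcal{D}_y$, ce qui prouve l'int�grabilit� de $\mathcal{D}$ et la maximalit� de $N$.

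Enfin, je d�finis $\omega$ comme limite directe des $\omega_i$. La coh�rence $(\varepsilon_i^{i+1})^\star \omega_{i+1}|_{N_i} = \omega_i$ r�sulte directement de la formule caract�risant $\omega_i$ en termes de $P_i$ et des identit�s de compatibilit� $P_{i+1} \circ T^\star\varepsilon_i^{i+1} = T\varepsilon_i^{i+1} \circ P_i$; ainsi la famille $(\omega_i)$ d�finit sans ambigu�t� une $2$-forme $\omega$ sur $N$. La fermeture de $\omega$ s'obtient parce que $d$ commute avec la limite directe, et la non-d�g�n�rescence faible d�coule de celle de chaque $\omega_i$ sur $T_{y_i} N_i$ combin�e � la description de $T_y N$ comme limite directe. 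L'obstacle principal sera l'�tape deux: justifier rigoureusement, � partir de la simple suppl�mentation de $T^\flat M_i$ dans $T^\flat M_{i+1}$ et de la simple connexit� des $U_i$, l'existence d'une inclusion $N_i \hookrightarrow N_{i+1}$ globalement d�finie comme sous-vari�t� faible, car c'est cette inclusion qui donne un sens effectif � $\underrightarrow{\lim} N_i$ et qui conditionne le comportement limite du syst�me.
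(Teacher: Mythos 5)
Votre strat\'egie d'ensemble est la m\^eme que celle du papier : obtenir les feuilles $N_i$ par le Th\'eor\`eme \ref{T_FeuilletageSurUneVarieteDeBanachPoissonPartielle}, poser $N=\underrightarrow{\lim}N_i$ et $\omega=\underrightarrow{\lim}\omega_i$, et vos \'etapes 1, 3 et 4 sont conformes \`a ce que fait le texte. Le probl\`eme est que toute la substance de l'\'enonc\'e se trouve dans votre \'etape 2, que vous laissez vous-m\^eme ouverte en la qualifiant d'obstacle principal : rien dans votre argument ne garantit que les inclusions locales de plaques se recollent en une injection lisse globale $N_i\hookrightarrow N_{i+1}$, ni que la limite directe des $N_i$ soit effectivement la feuille int\'egrale de $\mathcal{D}$ passant par $x$ ; la seule suppl\'ementation de ${T^\flat M_i}_{|U_i}$ dans ${T^\flat M_{i+1}}_{|U_i}$ jointe \`a la simple connexit\'e des $U_i$ ne fournit pas, telle quelle, ce recollement.

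Le papier ne proc\`ede pas par recollement direct de cartes feuillet\'ees : il exploite la simple connexit\'e des $U_i$ pour munir chaque ${T^\flat M_i}_{|U_i}$ d'une connexion lin\'eaire canonique plate, compatible avec celle du niveau $i+1$, de sorte que la distribution caract\'eristique devient localement une limite directe de fibr\'es banachiques de Koszul au sens de \cite{CaPe1} ; l'int\'egrabilit\'e de $\mathcal{D}$, l'identification de la feuille par $x$ \`a $\underrightarrow{\lim}N_i$ et l'existence de $\omega=\underrightarrow{\lim}\omega_i$ r\'esultent alors de r\'esultats g\'en\'eraux de cet ouvrage (Th\'eor\`eme~8.66, adapt\'e aux alg\'ebro\"ides de Banach partiels forts, et Proposition~5.56). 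C'est cet ingr\'edient --- ou un substitut \'equivalent qui contr\^ole simultan\'ement le recollement des plaques et la coh\'erence des structures aux diff\'erents niveaux --- qui manque \`a votre esquisse pour en faire une preuve.
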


\begin{proof}[Idée de la démonstration]  
Tout d'abord, il résulte 
 de l'Exemple 
\ref{Ex_LimiteDirecteDeStructuresPartiellesDePoissonCompatiblesSurDesVarietesDEBanach} que la distribution caractéristique $P(T^\flat M)$ est une limite directe locale de fibrés Banachiques de Koszul puisque chaque ouvert $U_i$ est simplement connexe et donc ${T^\flat M_i }_{|U_i}$ est muni d'une connection canonique linéaire plate et donc compatible 
avec la structure canonique plate sur ${T^\flat M_{i+1}}_{|U_{i+1}}$ (cf. \cite{CaPe23}, Chapitre~8, Définition~8.6). 
Il suffit  alors d'appliquer le  Théorème 8.12 de \cite{CaPe23}\footnote{En fait, il faut remplacer l'hypothèse "$(E_n, \pi_n, U_n, \rho_n, [., .]_n)$" est un algébroïde de Banach par "$(E_n, \pi_n, U_n, \rho_n, [., .]_n)$" est un algebroïde de Banach partiel  fort et la démonstration du Théorème  8.12 de \cite{CaPe23} fonctionne de la même manière.}. 
Maintenant concernant le résultat $\omega=\underrightarrow{\lim}
\omega_i$, 
il suffit d'appliquer la Proposition 5.20 du Chapitre~5 de \cite{CaPe23}.
\end{proof}

\begin{example}
\label{Ex_DistributionsCaracteristiquesSurDualAlgebreLieBanachNonNecessairementReflexive}
{\sf Distributions caractéristiques sur le dual d'une algèbre de Lie-Banach d'un groupe de Lie non nécessairement réflexive.}\\
On considère un groupe de Lie-Banach $G$ dont l'algèbre de Lie $\mathfrak{g}$ n'est pas nécessairement réflexive et on désigne par $\mathfrak{g}^\star$ le dual de $\mathfrak{g}$ (cf. Exemple~ \ref{Ex_CrochetsSurDualAlgebreLieBanachNonNecessairementReflexive}).\\
Soit l'algèbre des fonctions régulières
\[
\mathfrak{A} 
=
\{
f \in C^\infty \left( \mathfrak{g}^\star \right),\;
\forall \alpha \in \mathfrak{g}^\star,\;
\left( df(\alpha): \mathfrak{g}^\star \to \mathbb{R} \right)
\in \mathfrak{g} \subset  \mathfrak{g}^{\star\star}
\}.
\]
Le crochet de Lie-Poisson peut être écrit pour tout couple $(f,g)$ de fonctions de $\mathfrak{A} $ et tout élément $\alpha \in \mathfrak{g}^\star$,
\[
\begin{array}{rcl}
\{f,g\}_{\operatorname{LP}}(\alpha) 
		&=&
	\left\langle \alpha,[df(\alpha),dg(\alpha)] \right\rangle	\\
		&=&	
	\left\langle \alpha,\operatorname{ad}_{df}(\alpha) \left( dg(\alpha) \right) \right\rangle						\\				
		&=&
	\left\langle  \operatorname{ad}^\star_{df}(\alpha) ,  dg(\alpha) 
	 \right\rangle \\
	& = &\left\langle  P_\alpha(df) ,  dg(\alpha) 
	 \right\rangle.
\end{array}
\]
Ainsi le champ hamiltonien associé à l'hamiltonien $f$ est 
\[
X_f^{\operatorname{LP}}(\alpha) = \operatorname{ad}^\star_{df}(\alpha)=P_\alpha(df).
\]
Dans le cadre Banachique, si pour chaque $\alpha\in \mathfrak{g}^\star$, le noyau $\mathfrak{g}_\alpha$ de $P$ est scindé dans $\mathfrak{g}$, alors il existe un  groupe de Lie caractérisé par 
\[
G_\alpha 
= \{g\in G : \operatorname{Ad}^\star_g(\alpha)=\alpha\}
\]  
dont l'algèbre de Lie est $\ker P_\alpha$ (groupe d'isotropie de $\alpha$). Par suite, le quotient $G/G_\alpha$ a une structure de variété Banachique quotient  $G/G_\alpha$. 
(cf. \cite{Bou72}, Chapter~III, §6.4, Corollary~1). 
 Soit $\iota_\alpha$  l'application quotient  de l'application $g\mapsto \operatorname{Ad}^\star_g(\alpha)$ par $G_\alpha$. Alors $\iota_\alpha$ est une  application injective $C^\infty$ de $G/G_\alpha$ dans $\mathfrak{g}^\star$. Il en résulte que son image (qui est l'orbite  coadjointe $\mathcal{O}_\alpha$ de $\alpha$)  est munie de cette structure de variété banachique et l'espace tangent à cette orbite est l'image de $P_\alpha$. Cette image n'est pas fermée en général  si cet espace est de dimension infinie.  Ainsi  la distribution caractéristique est intégrable  puisque  les feuilles de la structure de Lie-Poisson sont alors les orbites coadjointes de l'action du groupe sur $\mathfrak{g}^\star$ et elle n'est pas nécessairement fermée~; elle n'est fermée que si ces orbites sont de dimensions finies (cf. \cite{OdRa08b} par exemple).  Cette situation généralise la situation classique de    Kirillow-Kostant-Souriau sur $\mathfrak {g}^\star$ (cf.\cite{Kir81}).\\

\end{example}

\begin{example}
\label{Ex_DistibutionCaractéristiqueSurPeDualAlgebreLieBanachNonNecessairementReflexive}
{\sf Distribution caractéristique sur le pré-dual de  l'algèbre de Banach-Lie d'un groupe de Lie  non nécessairement réflexive.}(\cite{OdRa03}).\\
Soit $G$ un groupe de Lie-Banach dont l'algèbre de Lie $\mathfrak{g}$ n'est pas nécessairement réflexive et supposons que $\mathfrak{g}$ possède un pré-dual $\mathfrak{g}_*$ tel que  pour tout   $g\in G$, on a $Ad^\star_g(\mathfrak{g}_\star)\subset \mathfrak{g}_\star$ (cf. Exemple \ref{Ex_CrochetsSurPreDualAlgebreLieBanachNonNecessairementReflexive}). Comme dans l'Exemple \ref{Ex_DistributionsCaracteristiquesSurDualAlgebreLieBanachNonNecessairementReflexive}  on suppose de plus que pour tout $\alpha\in \mathfrak{g}_*$ le noyau de $P_\alpha:T^\star_\alpha\mathfrak{g}_*\to T_\alpha \mathfrak{g}_*$ est scindé\footnote{Cette condition est équivalente à la condition (iii) du Théorème 7.3 de \cite{OdRa03}.}. Les mêmes arguments que dans l'exemple précédent permettent de montrer que la distribution caractéristique est intégrable et que chaque feuille est difféomorphe au quotient $G/G_\alpha$ où $G_\alpha$ est le groupe d'isotropie de $\alpha$. En particulier une fois de plus  si la distribution caractéristique n'est pas de dimension finie, elle n'est pas nécessairement  fermée  (cf.  \cite{OdRa08b} par exemple).
\end{example}

\begin{example}
\label{Ex_AlgebroideDeBanachLiePoissonDistributionCaractéristique}
{\sf Distribution caractéristique d'une  structure de Poisson partielle  associée à  un algébroïde de Lie-Banach}\\
On considère un algébroïde  Lie-Banach $ \left( E,M,\rho,[.,.]_E \right) $. Supposons  que $\ker\rho_x$ soit scindé dans la fibre $E_x$ pour tout $x\in M$ et que l'image de $\rho_x$ soit fermée.
Alors la distribution $\rho(E)$ est intégrable (cf \cite{Pel12} ou \cite{CaPe23}, Théorème~8.47). Soit $P:T^\flat E^\star\to TE^\star$ l'ancre de Poisson associée à cet algébroïde (cf. Exemple 
\ref{Ex_AlgebroideDeBanachLiePoisson}). Par construction, on a $\mathcal{D}(\sigma)=P(T_{\sigma}^\flat E^\star)$. 
Rappelons que  si $\sigma=(x,\xi)$, en coordonnées locales,  on a comme indiqué dans l'exemple~ \ref{Ex_AlgebroideDeBanachLiePoisson}, (\ref{eq_ValueP2})~:
\begin{equation}
\label{eq_ValueP3}
\begin{matrix}
P_\sigma(\eta,\mathsf{w})&=\left(\rho_x(\mathsf{w}),-\eta\circ\rho_x(.) +<\xi, C_x(\mathsf{w},.)>\right)\hfill\\
&=\left(\rho_x(s), \Phi_{[s,.]_\rho}(\sigma )+ d_xf\circ \rho\right).\hfill{}
\end{matrix}
\end{equation}
Si l'algebroïde de Lie-Banach $ \left( E,M,\pi,\rho,[.,.]_\rho \right) $
 est tel que l'image de $\rho_x$ soit fermée et son noyau scindé, 
 l'image de $P_\sigma$ est fermée dans $T_\sigma E^\star$. En effet, si $F$ est un supplémentaire de $K := \ker\rho_x$  dans la fibre $E_x$, la restriction de $\rho_x$  à $F$ est 
un isomorphisme sur $\rho_x(E_x)$. D'autre part  le noyau de $\rho^\star_x$ est l'annulateur $(\operatorname{im}\rho_x)^{\operatorname{ann}}$ de $\operatorname{im}\rho_x$ et $\rho^\star_x$ induit un isomorphisme $\widehat{\rho_x^\star}$ de $T_x^\star M/ (\operatorname{im}\rho_x)^{\operatorname{ann}} $ sur $\operatorname{im}\rho_x^\star$ qui est égal à  l'annulateur $K^{\operatorname{ann}} $~; par conséquent, $\operatorname{im}\rho_x^\star$ est fermé.  Considérons  $ (X, \theta)\in T_\sigma E^\star$ qui appartient à la 
fermeture de $\operatorname{im}(P_\sigma)$. il existe une suite $(\eta_n,\mathsf{w}_n)\in F\times T_x^\star M$ telle que  $P_\sigma(\eta_n,\mathsf{w}_n)$ converge vers $(X,\theta)$. Comme ${\rho_x}_{| F}$ est un 
isomorphisme, $\mathsf{w}_n$ converge vers $\mathsf{w}\in F$ et on a $\rho_x(\mathsf{w})=X$. De même $\xi\circ C_x(\mathsf{w}_n,.)$ converge vers $\xi\circ C_x(\mathsf{w},.)$  et comme  la 
composante verticale de $P_\sigma(\eta_n,\mathsf{w}_n)$  converge vers $\theta $, il en résulte que $\bar{\eta}:=\theta-\xi\circ C_x(\mathsf{w},.)$ appartient à l'image de $\rho^\star$ puisque cette image est fermée  et isomorphe à $K^{\operatorname{ann}} $.  Il en résulte que  la suite de classes d'équivalences de $[\eta_n]\in T_x^\star M/ (\operatorname{im}\rho_x)^{\operatorname{ann}} $ converge vers $[ \eta]=(\widehat{\rho^\star_x}_{| K^{\operatorname{ann}} })^{-1}(\bar{\eta})$ d'où le résultat. \\

Par ailleurs, on a 
\[
\ker P_\sigma=\{(\mathsf{w},\eta)\in K\times T^\star M,\; \rho_x^\star\eta=\xi\circ C_x(\mathsf{w},.)\}.
\]
Par suite si  l'espace $H_x:= \ker P_\sigma$
est  scindé 
\footnote{Cette situation est toujours  vraie si la fibre type $ \mathbb{E}$ du fibré $E$ est un espace de Hilbert ou si $\rho_x$ est un opérateur  semi-Fredholm}.  
 la distribution caractéristique est intégrable (cf. Théorème \ref{T_FeuilletageSurUneVarieteDeBanachPoissonPartielle}). De plus, chaque feuille caractéristique  contenant $(x,\xi)\in E^\star$ est alors un fibré  Banachique au-dessus de la feuille contenant $x$  du feuilletage sur $M$ associé à $\rho$  et dont la fibre est isomorphe à $E^\star_x/H_x$.
\end{example}

 

\begin{example}
\label{Ex_DistributionsCaracteristiquesLimitesDirecteVariétésPoissodimension finie}
{\sf Limite directe de variétés de Poisson partielles de dimension finie.}\\
Si  $\left( T^\flat M_i,P_i,\{.,.\}_i \right) _{i \in \mathbb{N}}$ est une suite croissante de variétés de Poisson de dimension finies, toutes les hypothèses du Théorème \ref{T_LimiteDirecteDeVarietesDePoisson} sont satisfaites. Par suite la limite directe $M=\underrightarrow{\lim}M_i$ possède une structure de variété adaptée de Poisson partielle  dont la distribution caractéristique est intégrable. En particulier, toute limite directe de variétés symplectiques de dimensions finies est une variété adapté faiblement symplectique. \\
Toute fonction $f\in \mathfrak{A} _P(M)$ est une limite directe $f=\underrightarrow{\lim}f_i$,où $f_i\in\mathfrak{A} _{P_i}(M_i)$ (cf.  Example \ref{Ex_LimiteDirecteDeStructuresPartiellesDePoissonCompatiblesSurDesVarietesDEBanach}) et par suite possède un champ  Hamiltonien  $X_f=\underrightarrow{\lim}X_{f_i}$.
\end{example}

\begin{example}
\label{Ex_DistributionsCaracteristiquesLimitesDirecteVariétésPoissondimension hilbertienne}
{\sf Limite directe de variétés de Poisson partielles de Hilbert.}\\
Si  $ \left( T^\flat M_i,P_i,\{.,.\}_i \right) _{i \in \mathbb{N}}$ est une suite croissante de variétés hilbertiennes de Poisson partielles vérifiant l'hypothèse (1) du Théorème \ref{T_LimiteDirecteDeVarietesDePoisson},   l'hypothèse (2) est automatiquement vérifiée. Par suite la limite directe $M=\underrightarrow{\lim}M_i$ possède une structure de variété de Poisson partielle adaptée dont la distribution caractéristique est intégrable.  En particulier, toute limite directe de variétés hilbertiennes symplectiques  est une variété adaptée faiblement symplectique.\\ 
Comme dans l'Exemple \ref{Ex_DistributionsCaracteristiquesLimitesDirecteVariétésPoissodimension finie}, toute fonction $f\in \mathfrak{A} _P(M)$ est une limite directe $f=\underrightarrow{\lim}f_i$, où $f_i\in\mathfrak{A} _{P_i}(M_i)$ et, par suite, possède un champ  Hamiltonien  $X_f=\underrightarrow{\lim}X_{f_i}$.
\end{example}

Ces résultats et exemples justifient la définition suivante.

\begin{definition}
\label{D_LocalSymplectique}  
Soit  $ \left( M,\mathfrak{A} _M \{.,.\}_P \right) $ une structure de Poisson partielle  sur une variété adaptée $M$ associée à l'ancre de Poisson $P: T^\flat M \to TM$. 
 On dit que la distribution caractéristique $\mathcal{D}$ est  \emph{fermée scindée intégrable}\index{distribution!fermée scindée intégrable} (\emph{\textbf{FSI}} en abrégé) si
\begin{itemize}
\item[$\bullet$] 
pour chaque  $x\in M$, le sous espace vectoriel  $P(T_x^\flat M)$ est fermé et $\ker P_x$ est scindé dans $T_x^\flat M$~;
\item[$\bullet$] 
l'application  linéaire bijective canonique  $\hat{P}_x$   entre $T_x^\flat M/ \operatorname{ker}P_x$ et $\mathcal{D}(x)=P(T_x^\flat M)$ associée  à $P_x$ est un isomorphisme~;
\item[$\bullet$]  
$\mathcal{D}$ est intégrable.
\end{itemize}
\end{definition}

A noter que  dans  le contexte des Théorèmes \ref{T_FeuilletageSurUneVarieteDeBanachPoissonPartielle}  et \ref{T_LimiteDirecteDeVarietesDePoisson} la distribution caractéristique est \textbf{FSI}.

Il résulte de la Remarque \ref{R_OP}
que si la distribution caractéristique est \textbf{FSI} alors l'espace caractéristique  $\mathcal{D}(x)$ est muni d'une forme bilinéaire  bornée symplectique faible  $\Omega_x$ canonique.  
On introduit alors la notion suivante (cf. \cite{PeCa19}).

\begin{definition}
\label{D_SymplecticLeaf}
 Soit $ \left( M,\mathfrak{A} _M \{.,.\}_P \right) $ une structure de Poisson partielle sur une variété adaptée $M$ associée à l'ancre de Poisson $P: T^\flat M \to TM$ telle que la distribution caractéristique est intégrable. Une feuille symplectique faible est une sous variété faible $(N, \iota_N)$ où $N\subset M$ et $\iota_N:N\to M$  est l'inclusion naturelle possédant les propriétés suivantes~:
\begin{enumerate}
\item
$(N,\iota_N)$ est une feuille de  $\mathcal{D}$~;
\item
Sur ${N}$, il existe une forme symplectique faible  $\omega_{N}$ telle que $(\omega_{N})_x=\Omega_x$  pour tout $x\in \mathcal{N}$.
\end{enumerate}
\end{definition}

Comme en dimension finie, on a le résultat suivant établi dans \cite{PeCa19}~:

\begin{theorem} 
\label{T_IntegrabilityCharacteristicDistributionWeakSymplecticLeaf} Soit $ \left(T^\flat M,M, P, \{.,.\}_P \right) $ une structure de Poisson partielle sur une variété adaptée $M$ associée à l'ancre de Poisson $P: T^\flat M \to TM$.
Si la distribution caractéristique est \emph{\textbf{FSI}}, alors chaque feuille a une structure naturelle de feuille symplectique faible.
\end{theorem}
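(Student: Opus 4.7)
The plan is to build the two-form $\omega_N$ leaf by leaf using the pointwise construction supplied by Remark \ref{R_OP}, then to verify smoothness, weak non-degeneracy, and closedness by reducing the computations to those involving Hamiltonian vector fields. Fix a leaf $N$ of $\mathcal{D}$ through $x_0$. For each $x\in N$, the \textbf{FSI} hypothesis together with Remark \ref{R_OP} gives a canonical bounded bilinear form
\[
\Omega_x(u,v)=\langle \beta,P_x(\alpha)\rangle,\quad \text{where } P_x(\alpha)=u,\ P_x(\beta)=v,
\]
well-defined on $\mathcal{D}(x)=T_xN$ (independent of the choice of $\alpha,\beta$ thanks to antisymmetry of $P$ and the splitting of $\ker P_x$), and weakly symplectic since $\widehat{P}_x$ is an adapted isomorphism from $T_x^{\flat}M/\ker P_x$ onto $\mathcal{D}(x)$. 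I set $(\omega_N)_x:=\Omega_x$ and address well-definedness as the first step.

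Next I would establish smoothness of $\omega_N$ on $N$. For any two functions $f,g\in\mathfrak{A}(U)$ defined on an open neighbourhood $U$ of $x_0$ in $M$, the Hamiltonian vector fields $X_f=P(df)$ and $X_g=P(dg)$ are tangent to $\mathcal{D}$ and hence to $N\cap U$, and by construction of $\Omega_x$ one has
\[
\omega_N(X_f,X_g)_{|N\cap U}=\langle dg,P(df)\rangle_{|N\cap U}=-\{f,g\}_P{}_{|N\cap U},
\]
which is smooth since $\{f,g\}_P\in\mathfrak{A}(U)$. Because $\mathcal{D}(x)=P(T_x^{\flat}M)$ is spanned (in the convenient sense) by values of such Hamiltonian vector fields and $\omega_N$ is bounded bilinear on each fibre, this identity propagates smoothness to arbitrary local sections of $TN$ expressed in terms of Hamiltonian ones.

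For closedness I would invoke the Jacobi identity via Theorem \ref{T_CaracterisationStructurePartielleDePoisson}. On the triple $(X_f,X_g,X_h)$ of Hamiltonian vector fields tangent to $N$, the standard Cartan formula for $d\omega_N$ combined with the bracket relation $[X_f,X_g]=X_{\{f,g\}_P}$ (equation (\ref{eq_CrochetChampsHamiltoniens})) and $\omega_N(X_f,X_g)=-\{f,g\}_P$ reduces, after cancellation, to a cyclic sum of double brackets
\[
\{f,\{g,h\}_P\}_P+\{g,\{h,f\}_P\}_P+\{h,\{f,g\}_P\}_P,
\]
which vanishes identically by Definition \ref{D_PartialPoissonStructure} and formula (\ref{eq_Schouten}). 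Since Hamiltonian sections are sufficient to detect a three-form on $N$ pointwise (again because $T_xN=\mathcal{D}(x)$ is generated by their values and $d\omega_N$ is bounded multilinear), this yields $d\omega_N=0$. Non-degeneracy of $\omega_N$ is then immediate from the bijectivity of $\widehat{P}_x$.

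The main obstacle I anticipate is of a technical convenient-calculus nature rather than an algebraic one: one must make sure that the pointwise formula $(\omega_N)_x=\Omega_x$ actually assembles into a \emph{smooth} (kinematic) section of $\Lambda^2 T'N$, given that in the adapted setting Hamiltonian vector fields need not be locally closed under finite linear combinations with smooth coefficients, nor need they admit flows. The way around this is to test smoothness of $\omega_N$ along smooth curves in $TN\times_N TN$, locally lifting such curves to sections of $T^{\flat}M\times_M T^{\flat}M$ (possible since $\ker P$ is split and $\widehat{P}$ is an isomorphism of adapted spaces), and to use the equality $\omega_N(P\alpha,P\beta)=\langle\beta,P\alpha\rangle$ to transfer smoothness from the bounded bilinear pairing on $T^{\flat}M\times_M T^{\flat}M$. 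The algebraic heart of the closedness argument is then exactly the classical Poisson-Jacobi computation, re-used verbatim in the convenient framework.
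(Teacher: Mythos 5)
The paper does not contain a proof of this theorem: it only records the statement and refers to \cite{PeCa19}, so there is no internal argument to set yours against. Your construction is nonetheless the natural one and is consistent with everything the paper sets up: Remark \ref{R_OP} is indeed what furnishes the pointwise form $\Omega_x$ on $\mathcal{D}(x)$, and the closedness computation via $[X_f,X_g]=X_{\{f,g\}_P}$ and the Jacobi identity is the classical Poisson calculation, correctly reduced to triples of Hamiltonian fields. One small slip: with the convention $\{f,g\}_P=-\langle df,P(dg)\rangle$ and the antisymmetry relation (\ref{eq_Antisymetrie}), one gets $\langle dg,P(df)\rangle=+\{f,g\}_P$, not $-\{f,g\}_P$; this also matches the identity between $\{.,.\}_P$ and $\{.,.\}_{\omega_N}$ stated in Theorem \ref{T_FeuilletageSurUneVarieteDeBanachPoissonPartielle}.

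Two points need more care than you give them. First, your reduction of smoothness and closedness to Hamiltonian vector fields presupposes that $\mathcal{D}(x)=P(T_x^\flat M)$ is generated by the values $X_f(x)$ for $f\in\mathfrak{A}(U)$; the paper uses this implicitly (e.g.\ in the proof of Proposition \ref{P_FonctionDeCasimirConstanteSurCourbeIntegraleDUnChampHamiltonien}), but it is not a formal consequence of the definition of $T^\flat M$, only of the subsheaf $\mathfrak{P}_M$, so it should be stated as a hypothesis or justified. Second, and more substantially, your lifting of smooth curves in $TN\times_N TN$ to $T^\flat M\times_M T^\flat M$ requires a smooth local right inverse of $P$ along the leaf, hence a complement of $\ker P$ varying smoothly; the \textbf{FSI} hypothesis only provides a fibrewise splitting and a fibrewise isomorphism $\hat{P}_x$. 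You must either show that the splitting can be chosen locally smooth (as in the Banach and direct-limit situations of Theorems \ref{T_FeuilletageSurUneVarieteDeBanachPoissonPartielle} and \ref{T_LimiteDirecteDeVarietesDePoisson}) or replace the lifting argument by another route to the smoothness of $x\mapsto\Omega_x$. You correctly identified this as the crux, but the justification offered does not yet close it.
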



\begin{definition}
\label{D_DistributionCaracteristiqueMaximale} 
Soit $ \left( M,\mathfrak{A} _M, \{.,.\}_P \right) $ une structure de Poisson partielle sur une variété adaptée $M$ associée à l'ancre de Poisson $P: T^\flat M \to TM$. On dit que la distribution caractéristique $\mathcal{D}$ est \emph{maximale}\index{distribution caractéristique!maximale} en $x \in M$ s'il existe un voisinage ouvert $U$ autour de $x$ tel que la restriction de  $\mathcal{D}$ à $U$ soit un sous fibré de $TM_{|U}$.\\
Si $\mathcal{D}$ est intégrable, une feuille $N$ telle que $\mathcal{D}$ est maximale pour tout $x\in N$ est appelée une feuille symplectique maximale\index{feuille symplectique!maximale}.
\end{definition}  

L'ensemble $\Sigma$ des points $x$ où $\mathcal{D}$  est maximale est un ouvert et, de plus, on a~:
\begin{proposition}
\label{P_Sigma} 
L'ensemble $\Sigma$ est un ouvert dense de $M$ et la restriction de $\mathcal{D}$ à chaque composante connexe de $\Sigma$ est un sous-fibré adapté fermé de $TM$.\\
En particulier, si  $\mathcal{D}$ est intégrable, toute feuille symplectique faible  maximale est contenue dans l'une des composantes connexes de $\Sigma$ et $\mathcal{D}$ définit un feuilletage régulier de $\Sigma$ en variétés symplectiques faibles.
\end{proposition}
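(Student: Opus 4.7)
The openness of $\Sigma$ is immediate from the definition: if $\mathcal{D}$ is maximal at $x$ via a witnessing neighborhood $U$, then every $y\in U$ inherits the same witness, so $U\subset\Sigma$. For density, I fix $x_0\in M$ and an open neighborhood $V$ of $x_0$ and seek some $y\in V\cap\Sigma$. The plan is to select a chart of $M$ near $x_0$ together with local trivializations of $T^{\flat}M$ and $TM$, so that $P$ becomes a smooth map $y\mapsto P_y\in L(\mathbb{M}^{\flat},\mathbb{M})$, and then to find a point of $V$ at which this family is locally trivializable as a subbundle of $TM$. In the finite-dimensional case, density reduces to the classical fact that the integer-valued lower-semicontinuous rank function $y\mapsto\dim P_y(\mathbb{M}^{\flat})$ attains a local maximum on an open dense set, which is exactly $\Sigma$.

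The hard part is to upgrade this argument to the adapted infinite-dimensional framework, where ``rank'' is no longer a discrete invariant. My strategy is to exploit the smoothness of $P$ together with the local triviality of $T^{\flat}M$ (Definition \ref{D_SousFibreFaible}) to prove that at a point $y\in V$ chosen so that $P_y(T_y^{\flat}M)$ realizes the ``richest possible'' local structure accessible in $V$ (for example, by inheriting from a semicontinuous invariant on a Banach approximation, as in Examples \ref{Ex_LimiteDirecteDeStructuresPartiellesDePoissonCompatiblesSurDesVarietesDEBanach}, \ref{Ex_DistributionsCaracteristiquesLimitesDirecteVari�t�sPoissodimension finie}, and \ref{Ex_DistributionsCaracteristiquesLimitesDirecteVari�t�sPoissondimension hilbertienne}), this image stabilizes on a whole neighborhood of $y$. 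This stabilization is what I expect to be the main obstacle, since in the adapted setting there is neither an implicit function theorem nor an automatic Fredholm structure on $P_y$ to force the local trivialization.

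For the closedness statement, let $C$ be a connected component of $\Sigma$. By definition, every $x\in C$ has a neighborhood $U_x\subset C$ on which $\mathcal{D}_{|U_x}$ is a convenient subbundle of $TM_{|U_x}$, and being a $c^{\infty}$-closed convenient subbundle in the sense of \cite{KrMi97} is a local property. Hence $\mathcal{D}_{|C}$ is a closed convenient subbundle of $TM_{|C}$.

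Finally, assume that $\mathcal{D}$ is integrable. A maximal weak symplectic leaf $N$, by Definition \ref{D_DistributionCaracteristiqueMaximale}, consists entirely of points at which $\mathcal{D}$ is maximal, so $N\subset\Sigma$, and its connectedness forces $N$ to lie in a single connected component of $\Sigma$. On such a component, the restriction of $\mathcal{D}$ is an integrable closed convenient subbundle of $TM$, so the induced foliation is regular; Theorem \ref{T_IntegrabilityCharacteristicDistributionWeakSymplecticLeaf} then equips each leaf with the canonical weak symplectic form built from the bilinear form $\Omega_x$ of Remark \ref{R_OP}, which is well defined and non-degenerate precisely because the subbundle hypothesis on $\Sigma$ forces $\ker P_x$ to vary as a split subbundle and $\widehat{P}_x$ to be an isomorphism fiberwise.
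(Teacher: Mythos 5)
The paper states Proposition \ref{P_Sigma} without any proof, so there is nothing of the authors' to compare your argument with; judged on its own terms, your proposal contains two genuine gaps.

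The first and most serious one is the density of $\Sigma$. Your treatment of openness, of the inclusion of a connected maximal leaf in a single component of $\Sigma$, and of the regularity of the induced foliation on $\Sigma$ are all correct and essentially formal. But for density you only describe a plan: pick $y\in V$ where $P_y \left( T^{\flat}_y M \right)$ realizes the ``richest possible'' local structure and show that the image stabilizes near $y$. In the adapted setting there is no integer-valued, lower semicontinuous invariant playing the role of the rank, so there is no order on the possible images in which a ``richest'' point can be selected, and the stabilization step --- which you yourself identify as the main obstacle --- is never carried out. As written, the density claim is asserted, not proved, and it is precisely the nontrivial content of the first sentence of the proposition; the appeal to the direct-limit examples does not supply an argument in the general convenient case.

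The second gap concerns the weak symplectic structure on the leaves. You claim that the subbundle hypothesis over $\Sigma$ ``forces $\ker P_x$ to vary as a split subbundle and $\widehat{P}_x$ to be an isomorphism fiberwise''. This is contradicted by Remark \ref{R_OP}, which insists that in the convenient context the induced bounded bijection $\widehat{\mathtt{P}}$ need \emph{not} be an isomorphism of adapted spaces even when $\ker\mathtt{P}$ is split and the image is closed; moreover nothing in Definition \ref{D_DistributionCaracteristiqueMaximale} guarantees that $\ker P_x$ is split in $T^{\flat}_x M$. To obtain the canonical bounded form $\Omega_x$ on $\mathcal{D}(x)$ and hence a foliation of $\Sigma$ by weak symplectic manifolds, you need the \textbf{FSI} hypothesis as in Theorem \ref{T_IntegrabilityCharacteristicDistributionWeakSymplecticLeaf}; the mere fact that $\mathcal{D}$ is a closed subbundle over $\Sigma$ does not supply it.
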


\subsubsection{Fonctions de Casimir}
\label{____FonctionsDeCasimirPourVarieteDePoissonPartielle}

Les structures symplectiques correspondent à des structures de Poisson non dégénérées. Dans ce cas,  le crochet de Poisson satisfait la condition supplémentaire
\begin{center}
$\forall g \in C^\infty(M), \{f,g\} = 0$ 
\end{center}
si et seulement si $f$ est une fonction constante.\\
Pour des structures de Poisson dégénérées il existe des fonctions non constantes vérifiant cette propriété appelées fonctions de Casimir et correspondent à des constantes du mouvement pour tout champ de vecteurs Hamiltonien.\\ 
On étend alors cette notion aux structures de Poisson partielles.

\begin{definition}
\label{D_FonctionDeCasimir}
Soit $ \left( T^\flat M,M, P, \{.,.\}_P \right) $ une structure de Poisson partielle sur une variété adaptée $M$.\\
Une fonction $C$ de $\mathfrak{A} (U)$ est dite \emph{fonction de Casimir}\index{fonction!de Casimir}\index{Casimir} si 
\[
\forall f \in\mathfrak{A} (U), \; \{ C,f \}_P = 0.
\]
\end{definition}

\begin{proposition}
\label{P_StructureAlgebreSurEnsembleDesFonctionsDeCasimir}
L'ensemble $\operatorname{Cas}_P(U)$\index{CasP@$\operatorname{Cas}(P)$ (ensemble des fonctions de Casimir locales de $P$)} des fonctions de Casimir locales associées à l'ancre de Poisson $P$ a une structure d'algèbre  abélienne qui est le centre de l'algèbre $\mathfrak{A} (U)$.
\end{proposition}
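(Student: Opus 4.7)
The statement has two parts: that $\operatorname{Cas}_P(U)$ is a subalgebra of $\mathfrak{A}(U)$ which is abelian under the Poisson bracket, and that it coincides with the Poisson center of $\mathfrak{A}(U)$ (the associative product being already commutative, the word \emph{centre} can only refer here to the Lie-algebra structure given by $\{.,.\}_P$). The whole proof is essentially a bookkeeping exercise using the bilinearity, antisymmetry and Leibniz property of the bracket already established above; there is no analytic obstacle to overcome.

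The plan is as follows. First, I would verify that $\operatorname{Cas}_P(U)$ is a vector subspace of $\mathfrak{A}(U)$: for $C_1,C_2\in \operatorname{Cas}_P(U)$, $\lambda\in\mathbb{R}$ and any $f\in \mathfrak{A}(U)$, the bilinearity of $\{.,.\}_P$ gives
\[
\{C_1+\lambda C_2,f\}_P=\{C_1,f\}_P+\lambda\{C_2,f\}_P=0.
\]
Next I would check stability under pointwise multiplication. By the antisymmetry of $\{.,.\}_P$ and the Leibniz rule of Proposition \ref{P_PoissonLeibniz}, for all $f\in\mathfrak{A}(U)$,
\[
\{C_1 C_2,f\}_P=-\{f,C_1 C_2\}_P=-C_1\{f,C_2\}_P-C_2\{f,C_1\}_P=C_1\{C_2,f\}_P+C_2\{C_1,f\}_P=0,
\]
so $C_1 C_2\in\operatorname{Cas}_P(U)$. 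Together these two points show that $\operatorname{Cas}_P(U)$ is a subalgebra of $\mathfrak{A}(U)$ for the pointwise product.

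Then I would observe that $\operatorname{Cas}_P(U)$ is abelian with respect to $\{.,.\}_P$: for $C_1,C_2\in\operatorname{Cas}_P(U)$, since $C_2\in\mathfrak{A}(U)$, the defining property of $C_1$ yields $\{C_1,C_2\}_P=0$. Finally, the identification with the centre is tautological. A function $C\in\mathfrak{A}(U)$ belongs to the Poisson centre of $\mathfrak{A}(U)$ iff $\{C,f\}_P=0$ for every $f\in\mathfrak{A}(U)$, which is exactly Definition \ref{D_FonctionDeCasimir}; hence $\operatorname{Cas}_P(U)$ coincides with the centre of $(\mathfrak{A}(U),\{.,.\}_P)$. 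No step presents any genuine difficulty; the only subtlety worth underlining is that the product $C_1C_2$ remains in $\mathfrak{A}(U)$, which is already guaranteed by Proposition \ref{P_AUalgebra}, so the Leibniz computation above is meaningful inside $\mathfrak{A}(U)$.
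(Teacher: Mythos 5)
Your proof is correct and follows essentially the same route as the paper: bilinearity for the vector-space structure, the Leibniz rule for stability under the pointwise product, and the observation that the abelian and centre properties are immediate from Definition \ref{D_FonctionDeCasimir}. You are in fact slightly more careful than the paper in routing the Leibniz computation through antisymmetry, since Proposition \ref{P_PoissonLeibniz} states the derivation property in the second argument.
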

\begin{proof}
Il est clair que $\operatorname{Cas}_P(U)$ est un sous-espace vectoriel de $\mathfrak{A} (U)$.\\
Soient $C_1$ et $C_2$, deux fonctions de Casimir et soit $g$ un élément quelconque de $\mathfrak{A}(U)$~; on a donc $\{C_1,g\}=0$ et $\{C_2,g\}=0$. Puisque le crochet $\{.,.\}_P$ est une bi-dérivation de $\mathfrak{A}(M)$, la règle de Leibniz donne
\[
\{C_1.C_2,g\}_P = C_1.\{C_2,g\} + C_2.\{C_1,g\} = 0
\] 
et ainsi le produit $C_1.C_2$ est encore une fonction de Casimir locale. Les autres propriétés découlent naturellement de la définition d'une fonction de Casimir.
\end{proof}

\begin{proposition}
\label{P_FonctionDeCasimirConstanteSurCourbeIntegraleDUnChampHamiltonien} Soit $N$ une sous variété fermée de $M$. Si $N$ est tangente  à la distribution caractéristique
\footnote{i.e. $ \forall x \in N, \; T_x N\subset P(T_x^\flat M)$.} 
alors toute fonction de Casimir $C\in \operatorname{Cas}_P(U)$ est constante sur $U\cap N$.\\
En particulier, si la distribution est intégrable et si chaque feuille est une sous-variété fermée alors la restriction de toute fonction de Casimir locale est constante sur cette feuille.\\
Si un  champ Hamiltonien $X_H = P.dH$ admet des courbes intégrales
\footnote{On rappelle que dans le cadre adapté (ou $c^\infty$-complet.), un champ de vecteurs cinématique lisse n'admet pas nécessairement de courbes intégrales (cf. \cite{KrMi97}, 32.12).}
, 
alors toute fonction de Casimir est constante sur toute les courbes intégrales de $X_H$.
\end{proposition}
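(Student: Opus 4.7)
Le plan est de traiter les trois assertions dans l'ordre inverse de difficult\'e, en s'appuyant sur une identit\'e commune. Pour toute fonction de Casimir $C\in\operatorname{Cas}_P(U)$ et toute fonction $f\in\mathfrak{A}(U)$, on a par d\'efinition $X_f(C)=\{f,C\}_P=-\{C,f\}_P=0$, donc $dC$ s'annule sur tout vecteur hamiltonien $X_f(x)=P_x(df(x))$. Cette observation, traduction directe de l'antisym\'etrie de $P$ et de la condition de Casimir, est le moteur de toute la d\'emonstration.

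Pour l'assertion concernant les courbes int\'egrales, qui est la plus simple, je proc\'ederais par un calcul direct\,: si $\gamma$ est une courbe int\'egrale de $X_H=P(dH)$, alors
\[
(C\circ\gamma)'(t)=dC_{\gamma(t)}\bigl(X_H(\gamma(t))\bigr)=\{H,C\}_P(\gamma(t))=-\{C,H\}_P(\gamma(t))=0,
\]
donc $C\circ\gamma$ est constante sur l'intervalle de d\'efinition.

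Pour la premi\`ere assertion, je fixerais $x\in U\cap N$ et $v\in T_xN$. L'hypoth\`ese de tangence donne $v\in\mathcal{D}_x=P_x(T^\flat_xM)$\,; j'\'ecrirais alors $v=P_x(\xi)$ pour un certain $\xi\in T^\flat_xM$. Le point-cl\'e consiste \`a invoquer le fait que $T^\flat_xM$ est engendr\'e, au sens du faisceau $\mathfrak{P}_M$ des modules de diff\'erentielles introduit plus haut, par les $df(x)$ pour $f\in\mathfrak{A}(U)$\,; $\xi$ appara\^\i t ainsi comme limite de combinaisons lin\'eaires de telles diff\'erentielles, et la lin\'earit\'e plus la continuit\'e de $P_x$ et de $dC(x)$ entra\^\i nent $dC(x)(v)=0$ \`a partir de l'identit\'e du premier paragraphe. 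Ceci valant pour tout $v\in T_xN$ et tout $x\in U\cap N$, la restriction de $dC$ \`a $N\cap U$ est identiquement nulle, donc $C$ est localement constante sur $U\cap N$ (et constante sur chaque composante connexe). La deuxi\`eme assertion en d\'ecoule imm\'ediatement\,: chaque feuille $N$ de $\mathcal{D}$ v\'erifie par d\'efinition $T_xN=\mathcal{D}_x$, est donc tangente \`a $\mathcal{D}$, et \'etant suppos\'ee ferm\'ee rel\`eve directement de la premi\`ere assertion.

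L'obstacle principal se concentre sur l'\'etape d'engendrement de $T^\flat_xM$ par les $\{df(x):f\in\mathfrak{A}(U)\}$ utilis\'ee dans la premi\`ere assertion. Dans le cadre $c^\infty$-complet au-del\`a de Banach, ce fait n'est pas automatique et doit s'appuyer sur la construction m\^eme de $\mathfrak{A}(U)$ et du sous-fibr\'e faible $T^\flat M$ via $\mathfrak{P}_M$ (cf. Proposition~\ref{P_AUalgebra}). Une fois ce point acquis, la d\'emonstration est purement formelle et ne repose que sur l'antisym\'etrie de $P$ et la d\'efinition d'une fonction de Casimir.
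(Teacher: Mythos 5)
Votre démonstration suit essentiellement la même voie que celle du papier : on montre que $dC$ s'annule sur la distribution caractéristique en combinant l'antisymétrie de $P$, la condition de Casimir et le fait que $T^\flat_xM$ est engendré par les différentielles $\{d_xf : f\in\mathfrak{A}(U)\}$, puis on en déduit la constance sur $U\cap N$ et sur les courbes intégrales. Le point que vous signalez comme obstacle (l'engendrement de $T^\flat_xM$ par les différentielles) est d'ailleurs affirmé sans justification dans la preuve du papier lui-même, et votre formulation par limites de combinaisons linéaires et continuité de $P_x$ et $dC(x)$ est, si quelque chose, légèrement plus prudente que celle du texte.
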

\begin{proof}
Soit $C$ une fonction de Casimir et soit $N$ une sous variété fermée  connexe de $M$ tangente  à la distribution caractéristique.  Notons que puisque $N$ est une sous variété fermée, la restriction de $C$ à $N$ est aussi lisse. Pour tout $x\in N$, il existe un ouvert $U$ autour de $x$ dans $M$ tel que l'image $P(T_x^\flat M)$ est engendrée par l'ensemble $\{d_xf,\; f\in\mathfrak{A} (U)\}$. Par suite, si $u\in T_xN$, il existe une fonction $f\in \mathfrak{A}(U)$ telle que $u=P(d_xf)$. Mais $\{f,C\}_P(x)=-<d_xC, u>=0$.  Ce résultat est vrai pour tout $x$ de $N$  et dans le domaine de définition de $C$ et tout $u\in T_xN$. Il en résulte que $C$ est constante sur $N$. Les autres propriétés en découlent clairement.
\end{proof}

Considérons  une structure de Poisson partielle $\left( M,\mathfrak{A} _M \{.,.\}_P \right) $  associée à l'ancre $P: T^\flat M \to TM$. Si $P$ est un   un isomorphisme, c'est-à-dire une variété partielle symplectique alors la distribution $\mathcal{D}$ est trivialement {\bf FSI} et  les seules fonctions Casimir sont les fonctions constantes sur chaque composante connexe de $M$. En particulier, toutes les fonctions de Casimir sont globales. Par contre si la distribution  $\mathcal{D}$ est {\bf FSI} et de codimension strictement positive, alors pour tout ouvert $U$ connexe contenu dans $\Sigma$, la distribution caractéristique est un sous-fibré fermé $\mathcal{D}_U$ de $TM_{ U}$ et par suite l'annulateur $\mathcal{D}^{\operatorname{ann}} _U$ de $\mathcal{D}_U$, i.e. le noyau de $P$ au-dessus de $U$ est aussi un sous-fibré fermé de $T^\flat M_U$. Par conséquent, l'algèbre des fonctions  de Casimir sur $U$ est précisément l'algèbre des fonctions $\operatorname{Cas}_P(U)\subset\mathfrak{A}(U)$  dont la différentielle appartient à  $\mathcal{D}^{\operatorname{ann}}_U$.

\begin{example}
\label{Ex_FonctionDeCasimirSurLeDualDUneAlgebreDeLieDeDimensionn}
{\sf Fonctions de Casimir sur le dual d'une algèbre de Lie de dimension finie.}\\
Pour cet exemple, on pourra consulter  \cite{BiMo91} (voir aussi \cite{SBH11}).\\
Soit $\mathfrak{g}$ une algèbre de Lie semi-simple de dimension $n$ muni du crochet $[.,.]$ et soit $ \left( e_1,\dots,e_n \right) $ une base de $\mathfrak{g}$. Les constantes de structure $ \left( c_{ij}^k \right) _{1 \leqslant i,j,k \leqslant n} $ de l'algèbre de Lie $\mathfrak{g}$ sont définies par
\[
\left[ e_i,e_j \right] = \displaystyle\sum_{k=1}^n c_{ij}^k e_k. 
\]
Si $ \left( x_i \right) _{i \in \{ 1,\dots,n \}}$ sont les coordonnées de l'espace dual $\mathfrak{g}^\star $, le crochet de Lie-Poisson\index{crochet de Lie-Poisson} est donné par
\[
\forall \left( F_1,F_2 \right) \in \left( C^\infty \left( \mathfrak{g}^\star \right)\right) ^2,\;
\{ F_1,F_2\}_{\operatorname{LP}} =
\displaystyle \sum_{i,j,k=1}^n c_{ij}^k x_k \dfrac{\partial F_1}{\partial x_i} \dfrac{\partial F_2}{\partial x_j}.
\]
Tout élément $x$ de $\mathfrak{g}$ définit \emph{l'endomorphisme adjoint} $\operatorname{ad}_x$ de $\mathfrak{g}$ où
\[
\operatorname{ad}_x(y) = [x,y].
\]
La trace de la composition de deux tels endomorphismes définit une forme bilinéaire symétrique $K$ par
\[
K(x,y) = \operatorname{trace} \left( \operatorname{ad}_x  \circ \operatorname{ad}_y  \right)
\]
et appelée \emph{forme de Killing}\index{forme!Killing}\index{Killing!forme}\index{K@$K$ (forme de Killing)}.\\
Son expression en coordonnées locales est
\[
K_{ij} 
= \displaystyle\sum_{k,l=1}^n c_{ik}^j c_{jl}^k.
\]
Puisque l'algèbre de Lie $\mathfrak{g}$ est semi-simple, la forme de Killing est inversible (critère de semi-simplicité de Cartan)\index{Cartan!critère de semi-simplicité} et l'expression de son inverse en coordonnées est noté  $K^{ij}$.\\
La fonction $C:(x_i,x_j) \mapsto \dfrac{1}{2} \displaystyle \sum_{i,j=1}^n K^{ij} x_i x_j$ est une fonction de Casimir \index{fonction de Casimir} pour le crochet de Lie-Poisson, i.e. $\{C,. \}_{\operatorname{LP}}=0$.
\end{example}

\begin{example}
\label{Ex_DistributionCaracteristiquesLimitesDirecteVariétésPoissondimensionHilbertiennesCasimir}{\sf Fonction de Casimir sur une limite directe de variétés de Poisson partielles de Hilbert}\\
Si  $\left( T^\flat M_i,P_i,\{.,.\}_i \right) _{i \in \mathbb{N}}$ est une suite croissante de variétés hilbertiennes de Poisson vérifiant l'hypothèse (1) du Théorème \ref{T_LimiteDirecteDeVarietesDePoisson}, alors la distribution caratéristique $\mathcal{D}$ sur   $M=\underrightarrow{\lim}M_i$ est {\bf FSI}. Par suite, pour tout ouvert $U=\underrightarrow{\lim}U_i$ connexe contenu dans $\Sigma$, on a 
\[
\operatorname{Cas}_P(U)
=\underrightarrow{\lim}\operatorname{Cas}_{P_i} \left( U_i \right) .
\]
\end{example}

\begin{example}
\label{Ex_AlgebroideDeBanachLiePoissonDistributionCaractéristiqueCasimir}
{\sf Fonctions de Casimir  d'une  structure de Poisson partielle  associée à  un algébroïde de Lie-Banach}\\
On considère un algébroïde de Lie-Banach $(E,M,\rho,[.,.]_E)$. Supposons  que $\ker\rho_x$ soit scindé dans la fibre $E_x$ pour tout $x\in M$ et que l'image de $\rho_x$ soit fermée. 
Soit $P:T^\flat E^\star\to TE^\star$ l'ancre de Poisson associée à cet algébroïde (cf. Exemple 
\ref{Ex_AlgebroideDeBanachLiePoisson}). Si, pour tout $x\in M$,   l'espace de Banach $K_x:=\{\xi\circ C_x(\mathsf{w},.),\; \mathsf{w}\in \ker \rho_x\}$ est scindé  dans $E_x^\star$ alors  la distribution caractéristique sur $E^\star$ associée à $P$ est {\bf FSI} (cf. Exemple \ref{Ex_AlgebroideDeBanachLiePoissonDistributionCaractéristique}). Dans ce cas, si $U$ est un ouvert connexe contenu dans $\Sigma$ alors $\operatorname{Cas}_P(E^\star_U)$ est engendrée par $\pi^\star(\operatorname{Cas}_P(U))$ et par l'ensemble des fonctions\footnote{\cite{CaPe23}, 6.3.3.}  
$\{ \Phi_{\mathfrak{a}}\;:\; \mathfrak{a}\in \ker \rho\}$ où
\[
\Phi_{\mathfrak{a}}(\sigma)
=<\sigma,\mathfrak{a} \circ pi'(\sigma)>
\]
\end{example}

\subsubsection{Structure de Poisson partielles polarisées}
\label{___Structure de Poisson partielles polarisées}
Étant donné une variété Poisson $ M$ connexe  de dimension $n$,  définie par une ancre $P$, dont les feuilles symplectiques maximales sont de dimension $2r$. Un système $\mathbf{F}:=(f_1, \dots,f_s):M\to\mathbb{R}^s$  est \emph{intégrable au sens de Liouville} (cf. \cite{LMV11}) si les conditions suivantes sont satisfaites:
\begin{enumerate}
\item[(1)] 
$f_1,\dots,f_s$ sont indépendantes sur un ouvert dense de $M$~;
\item[(2)] $f_1,\dots,f_s$ sont en involution.
\end{enumerate}
Soient  $\Sigma_{\bf F}$  l'ouvert dense sur lequel sont $f_1,\dots,f_s$ sont indépendantes et $\Sigma$ l'ouvert sur lequel le rang de $P$ est $2r$. Sur l'ouvert dense $
 \Sigma\cap\Sigma_{\bf F}$ les fonctions $f_1,\dots,f_s$ définissent un feuilletage régulier  $\mathcal{T}$ et l'intersection de $\mathcal{T}$ avec chaque feuille symplectique définit 
 un feuilletage Lagrangien (cf. \cite{LMV11}). En fait un tel feuilletage $\mathcal{T}$ est une  polarisation de  variété de Poisson sur $\Sigma\cap\Sigma_{\bf F}$,   c'est-à-dire la donnée d'un feuilletage régulier $\mathcal{T}$ tel que le feuilletage induit sur chaque feuille symplectique est Lagrangien (cf. \cite{AmAw04}).  En effet,  si l'on se donne un feuilletage régulier $\mathcal{T}$ sur $\Sigma\cap\Sigma_{\bf F}$  qui induit sur chaque feuille un feuilletage Lagrangien,  alors, conformément au Theorème de Darboux généralisé (cf. \cite{AMV04}), localement, il existe des fonctions  $f_1,\dots, f_s$ qui définissent ce feuilletage et qui satisfont aux conditions (1),  (2) et (3).
 
En dimension infinie, une  généralisation de l'intégrabilité au sens de Liouville pose de sérieux problèmes. En dimension finie cette notion est fortement liée au Théorème de Darboux et coordonnées  "action-angle". En général dans un contexte de Banach,  le Théorème de Darboux est vrai sur un espace de Banach réflexif pour une forme symplectique forte et, sous des hypothèses fortes, pour une forme symplectique faible (cf. \cite{Bam99}).  Par ailleurs, même dans ce cas strictement Banachique,  la forme symplectique  n'admet toujours pas localement de feuilletages Lagrangiens transverses et n'admet pas de coordonnées locales de type "action-angle". Néanmoins, le lecteur intéressé trouvera dans \cite{OdRa08b} une formulation analytique fonctionnelle  pour l'intégrabilité des système dynamiques semi-infinis de Toda.\\
 
Par contre, en dimension infinie,  la notion de polarisation  peut se définir comme suit:
 
\begin{definition}
\label{D_Polarization}  
Soit $ \left( T^\flat M,M, P, \{.,.\}_P \right) $ une structure de Poisson partielle sur une variété adaptée $M$ dont la distribution caractéristique est $\mathbf{FSI}$. On appelle \emph{polarisation locale}\index{polarisation locale} de cette structure,  sur un ouvert $U$, la donnée d'un  sous fibré intégrable $T^\tau M_U$ de $T^\flat M$ tel que $T^\tau_x M_U$ est une polarisation de l'espace linéaire de Poisson partiel $ \left( T_x M,T_x^\flat M, P_x \right) $ pour tout $x\in U$ (cf. Definition \ref{D_PolarisationLinéaire}). Une polarisation sur $\Sigma$ sera appelée simplement une polarisation de  $ \left( T^\flat M,M, P, \{.,.\}_P \right) $.  
 \end{definition}
 
\begin{example}
\label{Ex_VariétéDimensionFinie}
{\sf Variétés de Poisson de dimension finie}.\\
Tout système Hamiltonien intégrable sur une variété de dimension finie  donne lieu à un feuilletage Lagrangien et donc à une polarisation. Plus généralement, conformément à l'introduction, tout système Hamitonien intégrable sur une variété de  Poisson de dimension finie donne lieu à une polarisation de cette variété de Poisson.
\end{example}
 
\begin{example}\label{Ex_PoissonPartielHilbertien}
{\sf Variété Hilbertienne symplectique partielle.}\\ 
On considère une structure de variété Hilbertienne de Poisson symplectique partielle  $ \left( T^\flat M,P,\{.,.\} \right) $ dont la distribution caractéristique est fermée. Alors cette distribution est {\bf FSI} et soit $\Sigma$ l'ensemble de ses points réguliers. Chaque composante connexe $N$ de  $ \Sigma$, est munie d'une forme symplectique faible $\Omega$  et vérifie donc localement les hypothèses du théorème de Darboux-Bambusi (cf. \cite {Bam99}, \cite{Pel18}). Par suite, au voisinage de chaque point $x\in N$, il existe une carte $(U,\psi)$ telle que $(\psi^{-1})^\star\Omega$ est la restriction à $\Psi(U)$ de la  forme symplectique linéaire $\omega:=(\psi^{-1})^\star(\Omega_x)$ faible sur un espace de Hilbert $\mathbb{M}$. Mais comme $P_x(T^\flat M) $ est fermé dans $T_xM$,  cela implique que $\omega$ est une forme linéaire symplectique forte. D'après \cite{Wei71}, nous avons une décomposition $\mathbb{M}=\mathbb{L}\oplus\mathbb{L}^\star$ où $\mathbb{L}$ est un espace Lagrangien pour $\Omega$. Comme $T\psi(TM_{| U})=U\times \mathbb{ M}$, cela implique qu'il existe un feuilletage Lagrangien $\mathcal{T}$ sur $U$. Par suite $\mathcal{T}$ est une polarisation locale de $N$. De plus, supposons  que  $\mathbb{M}$ est séparable. Il en est de même pour $\mathbb{L}$. Considérons une base orthogonale $\{e_n\}_{n\in \mathbb{N}}$ et si $\{e_n^\star\} _{n\in \mathbb{N}}$ est la base duale, notons $\phi_n(t,u)=e^\star(u)$ la fonction linéaire associée à $e_n^\star$ sur $U$. Si $f_n:=\psi^\star\phi_n$, le fibré tangent à $\mathcal{T}$ est alors l'intersection des noyaux $\bigcap_{n\in \mathbb{N}}\ker df_n$ et on a évidemment $\{f_n,f_m\}_P=0$.  On peut considérer qu'on obtient une généralisation des conditions (1), (2), (3) sur $U$ à ce contexte.
\end{example} 

\begin{example}\label{Ex_PolarisationStructurePoissonPartielleAlgébroïdeLie-Banach} 
{\sf Polarisation d'une  structure de Poisson partielle  associée à  un algébroïde de Lie-Banach}.\\
On considère un algébroïde  Lie-Banach $ \left( E,M,\rho,[.,.]_E \right)$. Supposons  que $\ker\rho_x$ soit scindé dans la fibre $E_x$ pour tout $x\in M$ et que l'image de $\rho_x$ soit fermée.\\
Soit $P:T^\flat E^\star\to TE^\star$ l'ancre de Poisson associée à cet algébroïde (cf. Exemple 
\ref{Ex_AlgebroideDeBanachLiePoisson}). Si pour tout $x\in M$, l'espace de Banach $K_x:=\{\xi\circ C_x(\mathsf{w},.),\; \mathsf{w}\in \ker \rho_x\}$ est scindé  dans $E_x^\star$ alors  la distribution caractéristique sur $E^\star$ associée à $P$ est {\bf FSI} (cf. Exemple \ref{Ex_AlgebroideDeBanachLiePoissonDistributionCaractéristique}). Comme pour tout couple $(f,g)$ de fonctions de $ C^\infty(U)$, on a $\{f\circ \pi^\star,g\circ \pi^\star\}_P=0$  et que pour $U$ assez petit, nous avons 
\[
T_\sigma E^\star\cap P \left( \{d_\sigma (f\circ\pi^\star), f\in C^\infty(U)\} \right) 
=T_\sigma E^\star\cap P(T_\sigma^\flat M)
\]
il en résulte que le feuilletage de $E^\star$ défini par la fibration $\pi^\star$ est une polarisation de la structure  de Poisson  $ \left( T^\flat E^\star, E^\star, P,\{.,.\}_P \right) $.
\end{example}

\subsubsection{Processus de restriction}
\label{____ProcessusDeRestrictionVarietesPoissonPartielles}

Le problème de restriction pour les variétés de Poisson partielle consiste en la recherche de sous-variétés immergées $S$ de la variété de Banach $M$ qui héritent d'une structure de variété de Poisson partielle.\\
On rappelle le résultat obtenu dans \cite{MaMo84} et rappelé dans \cite{MMR85} où l'on impose des conditions de compatibilité entre $S$ et le tenseur de Poisson $P$ qui lui confèrent une structure de variété de Poisson.\\
Pour ce faire, on introduit quelques notations~;
\begin{description}
\item[$\bullet$]
$\mathfrak{X}(S,M)$~:
~algèbre des champs de vecteurs définis sur $S$ à valeurs dans $TM$~;
\item[$\bullet$]
$\mathfrak{X}^\star (S,M)$~:
~espace vectoriel des $1$-formes différentielles définies sur $S$ à valeurs dans $T^\star  M$~;
\item[$\bullet$]
$\mathfrak{X}(S)$~:
~algèbre des champs de vecteurs tangent à $S$~;
\item[$\bullet$]
$\mathfrak{X}(S)^{\operatorname{ann}} $~;
~annulateur de $\mathfrak{X}(S)$, i.e. ensemble des $1$-formes appartenant à $\mathfrak{X}^\star (S,M)$ qui s'annulent sur $\mathfrak{X}(S)$~;
\item[$\bullet$]
$\mathfrak{X}^\star _P(S)$~:
~sous-espace des $1$-formes appartenant à $\mathfrak{X}^\star (S,M)$  dont l'image par $P$ est un vecteur tangent à $S$.
\end{description}

Le couple $( \check{M}, \imath : \check{M} \to M )$ où $\imath$ est une immersion injective constitue une paramétrisation de la sous-variété immergée $S = \imath \left( \check{M} \right) $ de $M$.\\
Cette paramétrisation met en relation les champs de vecteurs et les $1$-formes différentielles définies sur $\check{M}$ avec les objets de même nature définis sur $S$~:
\begin{center}
$ 
X \left( \imath \left( \check{m} \right) \right)
= d \imath \left( \check{m} \right) . \check{X} \left( \check{m} \right)	
$					
\end{center}
\begin{center}
$
\check{\alpha} \left( \check{m} \right) 
= \delta \imath \left( \check{m} \right) . \alpha 
\left( \imath \left(   \check{m} \right) \right).
$
\end{center}
On obtient alors deux applications
\[
d \imath : \mathfrak{X} \left( \check{M} \right)
			\to \mathfrak{X}(S,M)
\textrm{  et  }
\delta \imath : \mathfrak{X}^\star (S,M) 
			\to \mathfrak{X}^\star  \left( \check{M} \right).
\]
On notera que 
\begin{description}
\item[$\bullet$]
$d \imath$ est une application injective dont l'image est l'algèbre $\mathfrak{X}(S)$~;
\item[$\bullet$]
$\delta \imath$ est une application surjective telle que
$
\operatorname{ker} \delta \imath 
= \mathfrak{X}(S)^{\operatorname{ann}} 
$. 
\end{description}
On obtient alors le théorème suivant donnant des conditions suffisantes pour obtenir sur la sous-variété $S$ une structure de variété de Poisson (cf. \cite{MaMo84}, Lemma 6.1 et \cite{MMR85}, Theorem 4.2 pour le cadre des variétés $\operatorname{PN}$).

\begin{theorem}
\label{T_RestrictionVarietePoissonASousVarieteImmergee}
Soit $S$ une sous-variété immergée de la variété de Poisson $M$ où $S$ est paramétrée par $( \check{M}, \imath : \check{M} \to M )$.\\
Supposons que l'on ait
\begin{description}
\item[\textbf{(RP~1)}]
{\hfil
$\mathfrak{X}^\star _P(S) 
+ \mathfrak{X}(S)^{\operatorname{ann}} 
= \mathfrak{X}^\star (S,M)$
}
\item[\textbf{(RP~2)}]
{\hfil
$\mathfrak{X}^\star _P(S)
\cap \mathfrak{X}(S)^{\operatorname{ann}} 
\subset \operatorname{ker}P$
}
\end{description}
$S$ hérite alors une structure de variété de Poisson pour le tenseur
\begin{description}
\item[]
{\hfil
$\check{P} = d\imath^{-1} \circ P \circ \delta \imath|_{\mathfrak{X}^\star _P(S)}^{-1}$
}
\end{description}
\end{theorem}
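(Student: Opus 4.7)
Mon plan de preuve s'articule autour de trois �tapes~: d�finition intrins�que du tenseur $\check{P}$, v�rification de son bien d�fini gr�ce aux deux hypoth�ses \textbf{(RP~1)} et \textbf{(RP~2)}, puis transfert de l'antisym�trie et de l'identit� de Jacobi depuis $M$ vers $\check{M}$.

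\medskip

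Tout d'abord, je construirais $\check{P}(\check{\alpha})$ pour une $1$-forme $\check{\alpha}$ sur $\check{M}$ comme suit~: la surjectivit� de $\delta\imath$ permet de choisir un rel�vement $\alpha\in \mathfrak{X}^\star(S,M)$ avec $\delta\imath(\alpha) = \check{\alpha}$, puis l'hypoth�se \textbf{(RP~1)} fournit une d�composition $\alpha = \alpha_1 + \alpha_2$ avec $\alpha_1 \in \mathfrak{X}^\star_P(S)$ et $\alpha_2 \in \mathfrak{X}(S)^{\operatorname{ann}}$. Par d�finition de $\mathfrak{X}^\star_P(S)$, le champ $P(\alpha_1)$ est tangent � $S$ ; l'injectivit� de $d\imath$ permet alors de poser $\check{P}(\check{\alpha}) := d\imath^{-1}(P(\alpha_1))$. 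Pour v�rifier l'ind�pendance vis-�-vis des choix, je consid�rerais d'une part deux d�compositions $\alpha = \alpha_1+\alpha_2 = \alpha_1'+\alpha_2'$~: la diff�rence $\alpha_1 - \alpha_1'= \alpha_2' - \alpha_2$ appartient � $\mathfrak{X}^\star_P(S)\cap \mathfrak{X}(S)^{\operatorname{ann}}$, donc � $\ker P$ par \textbf{(RP~2)}, d'o� $P(\alpha_1)=P(\alpha_1')$. D'autre part, si $\alpha'$ est un autre rel�vement de $\check{\alpha}$, alors $\alpha - \alpha' \in \ker \delta\imath = \mathfrak{X}(S)^{\operatorname{ann}}$, et le m�me argument appliqu� � $\alpha_1 - \alpha_1'$ (toujours via \textbf{(RP~2)}) conclut.

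\medskip

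Ensuite, l'antisym�trie de $\check{P}$ se d�duit de celle de $P$ par calcul direct~: pour $\check{\alpha},\check{\beta}$ avec rel�vements d�compos�s $\alpha=\alpha_1+\alpha_2$ et $\beta=\beta_1+\beta_2$, comme $\alpha_2$ et $\beta_2$ annulent les vecteurs tangents � $S$ et que $P(\alpha_1), P(\beta_1)$ sont tangents � $S$, on obtient $\langle \check{\beta}, \check{P}\check{\alpha}\rangle = \langle \beta_1, P(\alpha_1)\rangle = -\langle \alpha_1, P(\beta_1)\rangle = -\langle \check{\alpha}, \check{P}\check{\beta}\rangle$. L'identit� de Jacobi se transf�re par un argument de localisation~: pour deux fonctions $\check{f},\check{g}$ dont les diff�rentielles sont des sections de l'espace convenable sur $\check{M}$, on choisit des extensions locales $f,g$ sur $M$ telles que $df,dg$ rel�vent (au sens pr�c�dent) $d\check{f},d\check{g}$~; on v�rifie alors l'�galit� $\{\check{f},\check{g}\}_{\check{P}}=\imath^\star\{f,g\}_P$ sur $\check{M}$. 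L'identit� de Jacobi pour $\check{P}$ sur les triplets $(\check{f},\check{g},\check{h})$ r�sulte alors de celle pour $P$ appliqu�e � $(f,g,h)$, avec des extensions adapt�es, puis restriction � $S$.

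\medskip

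L'obstacle principal me semble �tre le contr�le, dans le cadre $c^\infty$-complet, de ces extensions locales~: il faut pr�ciser l'alg�bre des fonctions sur $\check{M}$ sur laquelle op�re $\{\cdot,\cdot\}_{\check{P}}$ (c'est-�-dire l'analogue de $\mathfrak{A}(\check{M})$ pour un sous-fibr� $T^\flat \check{M}$ naturellement associ� par \textbf{(RP~1)} et \textbf{(RP~2)}), et v�rifier que \textbf{(RP~1)} assure que la valeur du crochet sur $\check{M}$ ne d�pend pas du choix d'extension, ce qui revient exactement � l'ind�pendance prouv�e ci-dessus au niveau des diff�rentielles. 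Une fois ce cadre pos�, l'utilisation de la caract�risation de Schouten (Th�or�me \ref{T_CaracterisationStructurePartielleDePoisson}) permettrait �galement de formuler la transmission de Jacobi en termes du tenseur antisym�trique associ� � $\check{P}$, ce qui fournit une alternative plus intrins�que au calcul via les extensions.
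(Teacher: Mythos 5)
Votre argument de bonne d\'efinition de $\check{P}$ (d\'ecomposition d'un rel\`evement via \textbf{(RP~1)}, contr\^ole de l'ambigu\"it\'e par $\ker\delta\imath=\mathfrak{X}(S)^{\operatorname{ann}}$ et \textbf{(RP~2)}) est exactement celui du texte, qui montre de m\^eme que $\delta\imath$ restreinte \`a $\mathfrak{X}^\star_P(S)$ est surjective puis que deux ant\'ec\'edents ont m\^eme image par $P$. La v\'erification que $\check{P}$ est bien un tenseur de Poisson, que vous esquissez (antisym\'etrie par calcul direct, Jacobi par extensions locales), est pr\'ecis\'ement la partie que le texte ne d\'etaille pas et renvoie \`a \cite{MaMo84}, (6.14) ; votre proposition est donc correcte et suit essentiellement la m\^eme voie.
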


\begin{proof}
Pour $\alpha \in \mathfrak{X}(S)^{\operatorname{ann}} $ et $\check{X} \in \mathfrak{X} \left( \check{M} \right)$, on a
\[
< \delta\imath (\alpha),\check{X}>
= <\alpha,d\imath \left( \check{X} \right)
=0
\]
puisque $d\imath \left( \check{X} \right) \in \mathfrak{X}(S)$.\\
Ainsi 
\[
\delta \imath \left( \mathfrak{X}^\star _P(S) \right)
= \delta \imath \left( \mathfrak{X}^\star _P(S) 
+ \mathfrak{X}(S)^{\operatorname{ann}}  \right)
\]
D'après \textbf{(RP~1)}, on en déduit que
\[
\begin{array}{rcl}
\delta \imath \left( \mathfrak{X}^\star _P(S) \right) 
		&=&
	\delta \imath \left( \mathfrak{X}^\star (S,M)	\right)	\\
		&=&
	\mathfrak{X}^\star  \left( \check{M} \right)
\end{array}
\]
la dernière égalité résultant de la surjectivité de $\delta \imath$. On obtient alors
\[
\delta \imath \left( \mathfrak{X}^\star _P(S) \right) 
= \mathfrak{X}^\star  \left( \check{M} \right).
\]
Soient maintenant $\alpha_1 \in \mathfrak{X}^\star _P(S)$ et $\alpha_2 \in \mathfrak{X}^\star _P(S)$ telles que
\[
\delta \imath \left( \alpha_1 \right) 
= \delta \imath \left( \alpha_2 \right)
= \check{\alpha} \in \mathfrak{X}^\star  \left( \check{M} \right).
\]
On en déduit trivialement que
\[
\left\{
\begin{array}
[c]{c}
\alpha_2 - \alpha_1 \in \mathfrak{X}^\star _P(S)  \\
\alpha_2 - \alpha_1 \in \operatorname{ker} \delta \imath.
\end{array}
\right.
\]
Puisque $\operatorname{ker} \delta \imath = \mathfrak{X}(S)^{\operatorname{ann}}  $, on en déduit d'après \textbf{(RP~2)} que $\alpha_2 - \alpha_1 \in \operatorname{ker} P$ et ainsi $P \left( \alpha_2 \right) = P \left( \alpha_1 \right) = \check{X} \in \mathfrak{X} \left( \check{M} \right) $.\\


On définit alors sans ambiguïté le tenseur restreint $\check{P}$ où 
$\check{P} \left( \check{\alpha} \right) = \check{X}$ par 
\[
\check{P} = d\imath^{-1} \circ P \circ \delta \imath|_{\mathfrak{X}^\star _P(S)}^{-1}.
\]
On vérifie que le tenseur $\check{P}$ est un tenseur de Poisson. 
\footnote{\cite{MaMo84}, (6.14)} 
\end{proof}

Afin de généraliser le processus de réduction aux variétés de Poisson partielles, on considère une variété $ \left( M,\mathfrak{A}(M),\{.,.\}_{P} \right) $ de ce type associée au morphisme $P : T^\flat M \to TM$ et on introduit les notations suivantes~:
\begin{description}
\item[$\bullet$]
$\mathfrak{X}^\flat(S,M)$~:
~espace vectoriel des $1$-formes différentielles définies sur $S$ à valeurs dans $T^\flat M$~;
\item[$\bullet$]
$\mathfrak{X}^\flat(S)^{\operatorname{ann}} $~;
~annulateur partiel de $\mathfrak{X}(S)$, i.e. ensemble des $1$-formes appartenant à $\mathfrak{X}^\flat(S,M)$ qui s'annulent sur $\mathfrak{X}(S)$~;
\item[$\bullet$]
$\mathfrak{X}^\flat_P(S)$~:
~sous-espace des $1$-formes appartenant à $\mathfrak{X}^\flat(S,M)$  dont l'image par $P$ est un vecteur tangent à $S$.
\end{description}
En adaptant la démonstration du théorème \ref{T_RestrictionVarietePoissonASousVarieteImmergee} à notre situation, on obtient

\begin{theorem}
\label{T_RestrictionVarietePoissonPartielleASousVarieteImmergee}
Soit $S$ une sous-variété immergée de la variété de Poisson partielle $M$ où $S$ est paramétrée par $( \check{M}, \imath : \check{M} \to M )$.\\
Supposons que l'on ait
\begin{description}
\item[\textbf{(RpP~1)}]
{\hfil
$\mathfrak{X}^\flat_P(S) 
+ \mathfrak{X}^\flat(S)^{\operatorname{ann}} 
= \mathfrak{X}^\flat(S,M)$
}
\item[\textbf{(RpP~2)}]
{\hfil
$\mathfrak{X}^\flat_P(S)
\cap \mathfrak{X}^\flat(S)^{\operatorname{ann}} 
\subset \operatorname{ker}P$
}
\end{description}
$S$ hérite alors une structure de variété de Poisson partielle pour le tenseur
\[
\check{P} = d\imath^{-1} \circ P \circ \delta \imath|_{\mathfrak{X}^\flat_P(S)}^{-1}
\]
\end{theorem}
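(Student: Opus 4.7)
Mon plan consiste � calquer pas � pas la d\'emonstration du Th\'eor\`eme \ref{T_RestrictionVarietePoissonASousVarieteImmergee}, en rempla\c{c}ant syst\'ematiquement $T^\star M$ par $T^\flat M$ et $\mathfrak{X}^\star$ par $\mathfrak{X}^\flat$, puis \`a ajouter la v\'erification que le crochet obtenu prend bien ses valeurs dans l'alg\`ebre $\mathfrak{A}(\check{M})$ associ\'ee au sous-fibr\'e faible naturel $\imath^\star T^\flat M$ sur $\check{M}$.

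Je proc\'ederais en trois \'etapes. D'abord, je montrerais la surjectivit\'e de $\delta\imath : \mathfrak{X}^\flat_P(S) \to \mathfrak{X}^\flat(\check{M})$: puisque toute $1$-forme $\alpha \in \mathfrak{X}^\flat(S)^{\operatorname{ann}}$ v\'erifie $\langle \delta\imath(\alpha), \check{X}\rangle = \langle \alpha, d\imath(\check{X})\rangle = 0$ pour tout $\check{X} \in \mathfrak{X}(\check{M})$, l'hypoth\`ese \textbf{(RpP~1)} donne
\[
\delta\imath\left(\mathfrak{X}^\flat_P(S)\right) = \delta\imath\left(\mathfrak{X}^\flat_P(S) + \mathfrak{X}^\flat(S)^{\operatorname{ann}}\right) = \delta\imath\left(\mathfrak{X}^\flat(S,M)\right) = \mathfrak{X}^\flat(\check{M}),
\]
la derni\`ere \'egalit\'e r\'esultant du fait que $\imath$ est une immersion injective. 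Ensuite, je montrerais que $\check{P}$ est bien d\'efini: si $\alpha_1,\alpha_2 \in \mathfrak{X}^\flat_P(S)$ v\'erifient $\delta\imath(\alpha_1) = \delta\imath(\alpha_2)$, alors $\alpha_2-\alpha_1 \in \mathfrak{X}^\flat_P(S) \cap \mathfrak{X}^\flat(S)^{\operatorname{ann}} \subset \ker P$ par \textbf{(RpP~2)}, donc $P(\alpha_1) = P(\alpha_2)$, lequel est tangent \`a $S$ par d\'efinition de $\mathfrak{X}^\flat_P(S)$, rendant $d\imath^{-1} \circ P$ parfaitement d\'efini. L'antisym\'etrie de $\check{P}$ r\'esulte alors directement de celle de $P$ par la relation $\langle\check{\alpha},\check{P}(\check{\beta})\rangle = \langle\alpha, P(\beta)\rangle$ pour tous rel\`evements $\alpha,\beta$.

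Le point d\'elicat --- qui constituera l'obstacle principal --- est la v\'erification de l'identit\'e de Jacobi pour le crochet induit $\{\check{f},\check{g}\}_{\check{P}} = -\langle d\check{f}, \check{P}(d\check{g})\rangle$ sur $\mathfrak{A}(\check{M})$, ainsi que la stabilit\'e de cette alg\`ebre par le crochet. La strat\'egie serait de montrer, pour tout $\check{m}\in \check{M}$ et tous $\check{f},\check{g}\in\mathfrak{A}(\check{M})$, l'existence au voisinage de $\imath(\check{m})$ d'extensions locales $f,g \in \mathfrak{A}(M)$ telles que $\imath^\star f = \check{f}$, $\imath^\star g = \check{g}$ localement, ce qui permettrait alors d'\'etablir la relation cl\'e
\[
\{\check{f},\check{g}\}_{\check{P}} = \imath^\star\{f,g\}_P
\]
en utilisant que $df|_S \in \mathfrak{X}^\flat(S,M)$ se d\'ecompose selon \textbf{(RpP~1)} en un rel\`evement dans $\mathfrak{X}^\flat_P(S)$ modulo $\mathfrak{X}^\flat(S)^{\operatorname{ann}}$, la composante dans l'annulateur ne contribuant pas \`a $\check{P}$ gr\^ace \`a \textbf{(RpP~2)}. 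L'identit\'e de Jacobi et la r\`egle de Leibniz pour $\check{P}$ se d\'eduiraient alors par restriction \`a $\imath(\check{M})$ de celles satisfaites par $P$ sur $\mathfrak{A}(M)$, via la Proposition \ref{P_PoissonLeibniz} et le Th\'eor\`eme \ref{T_CaracterisationStructurePartielleDePoisson}. La difficult\'e technique principale r\'eside dans la construction de telles extensions locales dans le cadre $c^\infty$-complet, o\`u les th\'eor\`emes classiques de prolongement ne sont plus automatiquement disponibles; une approche alternative consisterait \`a travailler directement avec les formes diff\'erentielles et \`a utiliser l'interpr\'etation tensorielle du crochet de Schouten (\ref{eq_Schouten}) pour contourner le besoin d'extensions globales.
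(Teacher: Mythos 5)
Votre d\'emonstration suit essentiellement la m\^eme voie que le texte~: le papier se borne \`a indiquer qu'il suffit d'adapter la preuve du Th\'eor\`eme \ref{T_RestrictionVarietePoissonASousVarieteImmergee}, laquelle consiste pr\'ecis\'ement en l'argument de surjectivit\'e de $\delta\imath$ sur $\mathfrak{X}^\flat_P(S)$ via \textbf{(RpP~1)} et de bonne d\'efinition de $\check{P}$ via \textbf{(RpP~2)} que vous reproduisez fid\`element. Vous allez m\^eme un peu plus loin que le texte en explicitant (sans la r\'esoudre compl\`etement, ce que vous reconnaissez) la v\'erification de l'identit\'e de Jacobi et de la stabilit\'e de l'alg\`ebre $\mathfrak{A}$, point que le papier rel\`egue \`a une simple r\'ef\'erence \`a \cite{MaMo84}, (6.14).
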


\subsubsection{Processus de projection}
\label{____ProcessusDeProjectionVarietesPoissonPartielles}
Le problème de projection est de déterminer les distributions intégrables $\mathcal{D}$ d'une variété de Poisson $M$ pour lesquelles la variété quotient $M/\mathcal{D}$
hérite  d'une structure de variété de Poisson.\\
On se réfère à nouveau au document \cite{MaMo84} et à l'article \cite{MMR85}. \\
Soit $ \left( \check{M}, \jmath : M \to \check{M} \right) $ une paramétrisation de $M/\mathcal{D}$ où $\check{M}$ est la variété de paramétrisation et où $\jmath : M \to \check{M}$ est une submersion surjective telles que les feuilles de la distribution sont les fibres de la submersion. \\
Un champ de vecteurs $X$ est dit \emph{projetable}\index{projetable!champ de vecteurs} si pour tout couple $ \left( x_1,x_2 \right) $ de points de la même feuille, on a~:
\[
d \jmath \left( x_1 \right) . X \left( x_1 \right) 
= d \jmath \left( x_2 \right) . X \left( x_2 \right)
\]
Un champ de vecteurs projetable définit alors un unique champ de vecteurs $\check{X}$ sur $\check{M}$ par
\[
\check{X} \left( \jmath(x) \right) 
= d \jmath (x) . X(x)
\]
appelé \emph{projeté}\index{projet\'e!champ de vecteurs} de $X$ sur $\check{M}$.\\
Une forme différentielle $\alpha$ est dite \emph{projetable}\index{projetable!forme différentielle} s'il existe une forme différentielle $\check{\alpha} \in \mathfrak{X}^\star  (M)$ telle que pour point $x$ de $M$, on ait~:
\[
\alpha(x) 
= \delta \jmath (x) . \check{\alpha} \left( \jmath(x) \right).
\]
On introduit alors les notations
\begin{description}
\item[$\bullet$]
$\mathfrak{X}_{\mathcal{D}}(M)$~:~sous-espace vectoriel des champs de vecteurs projetables~;
\item[$\bullet$]
$\mathfrak{X}^\star _{\mathcal{D}}(M)$~:~sous-espace vectoriel des formes différentielles projetables.
\end{description}
On obtient alors le résultat suivant (cf. \cite{MaMo84}, Lemma~7.1).
\begin{theorem}
\label{T_ProjectionVarietePNLeLongDistributionIntegrable}
Soit $\mathcal{D}$ une distribution intégrable d'une variété de Poisson partielle $M$. Supposons que les feuilles soient connexes et que l'espace quotient $M/\mathcal{D}$ soit une variété quotient $\check{M}$ où la projection canonique $\jmath : M \to \check{M}$ est une submersion surjective. Si
\begin{description}
\item[\textbf{(PpP)}]
{\hfil
$ P \left( \mathfrak{X}^\star _D(M) \right) 
\subset \mathfrak{X}_D(M) $
}
\end{description}
alors $\check{M}$ hérite de $M$ d'une structure de variété de Poisson partielle  définie par le tenseur
\[
\check{P} = d\jmath \circ P \circ \delta \jmath.
\]
\end{theorem}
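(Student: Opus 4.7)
Mon plan se d\'ecompose en quatre \'etapes. La premi\`ere consiste \`a construire le sous-fibr\'e faible $T^\flat\check{M}$ de $T^\prime\check{M}$ en posant, pour chaque $\check{x}\in\check{M}$,
\[
T^\flat_{\check{x}}\check{M}=\{\check{\xi}\in T^\prime_{\check{x}}\check{M}\;:\;\delta\jmath(x)\cdot\check{\xi}\in T^\flat_x M\text{ pour un (et donc tout) }x\in\jmath^{-1}(\check{x})\}.
\]
L'ind\'ependance par rapport au choix du rel\`evement $x$ de $\check{x}$ r\'esulte de la connexit\'e des feuilles de $\mathcal{D}$ : une $1$-forme du type $\delta\jmath\cdot\check{\alpha}$ est projetable, et on utilise les trivialisations locales de $T^\flat M$ adapt\'ees \`a la submersion $\jmath$. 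On munit alors $T^\flat_{\check{x}}\check{M}$ de sa structure d'espace adapt\'e transport\'ee depuis $T^\flat_x M$ par l'injection $\delta\jmath(x)$.

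La deuxi\`eme \'etape consiste \`a d\'efinir $\check{P}_{\check{x}}(\check{\xi})=d_x\jmath\cdot P_x(\delta\jmath(x)\cdot\check{\xi})$. Comme $\delta\jmath\cdot\check{\xi}$ est par construction une $1$-forme projetable appartenant fibre par fibre \`a $T^\flat M$, l'hypoth\`ese \textbf{(PpP)} implique que $P(\delta\jmath\cdot\check{\xi})$ est un champ de vecteurs projetable, dont la projection par $d\jmath$ est ind\'ependante du choix de $x$ dans la fibre. L'antisym\'etrie de $\check{P}$ r\'esulte alors de l'adjonction $\langle\delta\jmath\cdot\check{\eta},v\rangle=\langle\check{\eta},d\jmath\cdot v\rangle$ combin\'ee \`a l'antisym\'etrie de $P$ : $\check{P}$ est bien une presque ancre de Poisson au sens de la D\'efinition \ref{D_MorphismeAntisymetrique}.

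La troisi\`eme \'etape est l'identit\'e fondamentale de transfert : pour $\check{f},\check{g}\in\mathfrak{A}(\check{M})$, en posant $f:=\check{f}\circ\jmath$ et $g:=\check{g}\circ\jmath$, on a $f,g\in\mathfrak{A}(M)$ (car les d\'eriv\'ees it\'er\'ees de $f$ s'expriment par la r\`egle de cha\^ine en termes de celles de $\check{f}$ compos\'ees avec $d\jmath$, et appartiennent donc \`a $T^\flat M$), et surtout
\[
\{\check{f},\check{g}\}_{\check{P}}\circ\jmath=\{f,g\}_P,
\]
ce qui d\'ecoule imm\'ediatement du calcul $\{f,g\}_P=-\langle df,P(dg)\rangle=-\langle\delta\jmath\cdot d\check{f},P(\delta\jmath\cdot d\check{g})\rangle=-\langle d\check{f},d\jmath\cdot P(\delta\jmath\cdot d\check{g})\rangle\circ\jmath$. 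L'identit\'e de Jacobi et la propri\'et\'e de Leibniz pour $\{.,.\}_{\check{P}}$ sur $\mathfrak{A}(\check{M})$ se d\'eduisent alors par injectivit\'e de $\jmath^\star:C^\infty(\check{M})\to C^\infty(M)$ directement des propri\'et\'es correspondantes pour $\{.,.\}_P$ sur $M$ (Proposition \ref{P_PoissonLeibniz} et la structure de Poisson partielle donn\'ee sur $M$).

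Le principal obstacle sera la premi\`ere \'etape : v\'erifier rigoureusement que $T^\flat\check{M}$ est un v\'eritable sous-fibr\'e faible adapt\'e (avec structure d'espace adapt\'e sur chaque fibre et injection $T^\flat\check{M}\to T^\prime\check{M}$ qui est un morphisme de fibr\'es adapt\'es), et que l'ind\'ependance de la d\'efinition par rapport au rel\`evement $x$ tient effectivement. Cela requiert l'usage conjoint de la connexit\'e des feuilles de $\mathcal{D}$, du caract\`ere de submersion de $\jmath$ et de l'hypoth\`ese \textbf{(PpP)}, tandis que les \'etapes suivantes sont essentiellement formelles une fois cette construction bien pos\'ee.
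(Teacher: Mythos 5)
Le papier ne d\'emontre pas ce th\'eor\`eme : il renvoie simplement au Lemme~7.1 de \cite{MaMo84}, \'etabli dans le cas o\`u $T^\flat M=T^\star M$. Votre sch\'ema est donc \`a \'evaluer pour lui-m\^eme, et ses articulations centrales sont correctes et conformes \`a la strat\'egie de Magri--Morosi adapt\'ee au cadre partiel : c'est bien \textbf{(PpP)} joint \`a la connexit\'e des feuilles qui rend $d\jmath\cdot P(\delta\jmath\cdot\check\xi)$ ind\'ependant du rel\`evement choisi dans la fibre ; l'antisym\'etrie de $\check P$ d\'ecoule de l'adjonction $\langle\delta\jmath\cdot\check\eta,v\rangle=\langle\check\eta,d\jmath\cdot v\rangle$ ; et l'identit\'e de Jacobi pour $\{.,.\}_{\check P}$ se ram\`ene \`a celle de $\{.,.\}_P$ par la relation de transfert $\{\check f\circ\jmath,\check g\circ\jmath\}_P=\{\check f,\check g\}_{\check P}\circ\jmath$ et l'injectivit\'e de $\jmath^\star$, cons\'equence de la surjectivit\'e de $\jmath$.

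Deux \'etapes restent toutefois non justifi\'ees, la premi\`ere \'etant un vrai trou que vous signalez vous-m\^eme sans le combler. (i) Rien ne garantit que la condition $\delta\jmath(x)\cdot\check\xi\in T^\flat_xM$ soit ind\'ependante de $x$ dans la fibre : la connexit\'e des feuilles ne suffit pas, car un sous-fibr\'e faible $T^\flat M$ n'a aucune raison a priori d'admettre des trivialisations compatibles avec la submersion $\jmath$. Il faut soit d\'efinir $T^\flat_{\check x}\check M$ par quantification universelle sur la fibre (et alors d\'emontrer que l'on obtient encore un sous-fibr\'e faible adapt\'e, ce qui n'est pas gratuit), soit imposer une hypoth\`ese de compatibilit\'e entre $T^\flat M$ et $\mathcal D$ -- hypoth\`ese que l'\'enonc\'e du papier passe lui aussi sous silence. (ii) L'inclusion $\jmath^\star\left(\mathfrak A(\check M)\right)\subset\mathfrak A(M)$ n'est pas imm\'ediate : la formule de Fa\`a di Bruno fait appara\^itre des termes de la forme $d^l\check f(\cdot,\dots)\circ d^{j}\jmath(\cdot,u_{\bullet})$, c'est-\`a-dire des pr\'ecompositions d'\'el\'ements de $T^\flat\check M$ par des applications lin\'eaires $T_xM\to T_{\jmath(x)}\check M$ autres que $d_x\jmath$ ; or votre d\'efinition de $T^\flat\check M$ ne contr\^ole que la pr\'ecomposition par $d_x\jmath$, de sorte que l'appartenance de ces termes \`a $T^\flat_xM$ demande un argument suppl\'ementaire, analogue \`a la Remarque \ref{R_AUEndomorphime} mais pour des morphismes de $TM$ vers $T\check M$.
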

 
\section{Couple de variétés hamiltoniennes partielles compatibles}
\label{__CoupleDeVarietesHamiltoniennesPartiellesCompatibles}

La notion fondamentale de structure bihamiltonienne\index{structure!bihamiltonienne} pour les systèmes dynamiques a été introduite en 1978 par F. Magri  dans \cite{Mag78}. Du point de vue géométrique, ceci correspond à l'existence de deux structures de Poisson\index{structure!de Poisson} liées par un opérateur de récursion permettant de générer une suite d'intégrales premières.\\

Les structures de Poisson mises en jeu peuvent être définies à l'aide d'un morphisme antisymétrique $P$ du fibré cotangent $T'M$ dans le fibré tangent $TM$. Ce morphisme est associé à un crochet $\{.,.\}$ sur l'ensemble $C^\infty(M)$ des fonctions lisses sur la variété, i.e. une application bilinéaire antisymétrique $C^\infty(M) \times C^\infty(M) \to C^\infty(M)$ qui satisfait à la fois les identités de Jacobi et de Leibniz.
\\

Comme rappelé en introduction, une généralisation au cas où le tenseur $P$ n'est défini que sur un sous-fibré du fibré cotangent dans le cadre de structures de Banach conduit à la notion de variété de Poisson partielle.

 
On est alors amené naturellement à étudier les notions de structures bihamiltoniennes partielles. On se focalise ici, dans le cadre $c^\infty$-complet sur des variétés~:
\begin{itemize}
\item[$\bullet$]
de type $\operatorname{PQ}$ correspondant à un couple $(P,Q)$ de deux structures de Poisson partielles, définies sur le même sous-fibré $T^\flat M$, qui sont compatibles, i.e. pour lesquelle $P+Q$ est encore une structure de même type~;
\item[$\bullet$]
de type $\operatorname{PN}$ correspondant à la donnée d'un tenseur de Poisson $P : T^\flat M \to TM$ et d'un opérateur de Nijenhuis $N$ vérifiant une condition de compatibilité avec $P$~;
\item[$\bullet$]
$\operatorname{P\Omega}$ correspondant à la donnée d'un tenseur de Poisson $P : T\flat M \to TM$ et d'une forme symplectique faible $\Omega : TM \to T'M$ à valeurs dans $T^\flat M$ vérifiant la condition de compatibilité $d(\Omega P \Omega)=0$.  
\end{itemize}

Ces notions sont illustrées par de nombreux exemples  tant en dimension finie qu'infinie.\\ 

$M$ désigne une variété adaptée (ou $c^\infty$-complète) modelée sur l'espace vectoriel adapté $\mathbb{M}$, $p_{M}:TM\to M$ son fibré tangent cinématique et $p_{M}^{\prime}:T^{\prime}M\to M$ son fibré cotangent cinématique. 

\subsection{Variétés $\operatorname{PQ}$ partielles}
\label{__VarietesPQPartielles}

\begin{definition}
\label{D_AncresPoissonPartiellesCompatibles}
Soit $p^{\flat}:T^{\flat}M\to M$ un sous-fibré faible de
$p_{M}^{\prime}:T^{\prime}M\to M$.\\
Soient deux ancres de Poisson partielles $P : T^{\flat}M \to TM$ et $Q : T^{\flat}M \to TM$. $M$ est dite \emph{variété $\operatorname{PQ}$ partielle}\index{PQ@$\operatorname{PQ}$!variété}\index{variété!$\operatorname{PQ}$ partielle} si la somme des ancres $P+Q : T^{\flat} \to TM$  est encore une ancre de Poisson partielle. Dans ce cas, les deux ancres de Poisson sont dites \emph{compatibles}\index{ancres de Poisson partielles compatibles}\index{compatibles!ancres de Poisson partielles}.
\end{definition}

Soient  $\underline{\tilde{P}}$ et $\underline{\tilde{Q}}$ les tenseurs antisymétriques associés à des quasi-ancres de Poisson  $\operatorname{P}$ et $\operatorname{Q}$ respectivement. Alors le crochet de Schouten  $[\underline{\tilde{P}},\underline{\tilde{Q}}]$ est bien défini (cf. \cite{CaPe23}, Chapitre~7, Définition~7.50) et  comme que le degré de $\underline{\tilde{P}}$ et  $\underline{\tilde{Q}}$ est $2$, d'après le Théorème~7.51 (2), le crochet de Schouten $[\underline{\tilde{P}},\underline{\tilde{Q}}]$ est symétrique.

\begin{theorem}
\label{T_CaracterisationCompatibilitePQAvecNulliteCrochet}
Deux ancres de Poisson partielles $P : T^{\flat}M \to TM$ et $Q : T^{\flat}M \to TM$ sont compatibles si et seulement si
le crochet de Schouten $[\underline{\tilde{P}},\underline{\tilde{Q}}]$ est nul.
\end{theorem}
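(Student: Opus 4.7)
Le plan est de r\'eduire la compatibilit\'e de $P$ et $Q$ \`a un simple calcul alg\'ebrique sur les tenseurs associ\'es, en s'appuyant sur le Th\'eor\`eme~\ref{T_CaracterisationStructurePartielleDePoisson} qui caract\'erise les structures de Poisson partielles par la nullit\'e du crochet de Schouten, ainsi que sur la bilin\'earit\'e et la sym\'etrie (en degr\'e $2$) du crochet de Schouten rappel\'ees juste avant l'\'enonc\'e.

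Je commencerais par v\'erifier que $P+Q:T^{\flat}M\to TM$ est encore une quasi-ancre de Poisson, ce qui est imm\'ediat puisque la somme de deux morphismes antisym\'etriques de $T^\flat M$ dans $TM$ satisfait encore la relation (\ref{eq_Antisymetrie}); par construction m\^eme du tenseur associ\'e (cf. Notation~\ref{N_UnderlineP_Psharp}), on a de plus $\widetilde{P+Q}=\tilde{P}+\tilde{Q}$. J'appliquerais ensuite le Th\'eor\`eme~\ref{T_CaracterisationStructurePartielleDePoisson} \`a $P+Q$~: cette quasi-ancre d\'efinit une structure de Poisson partielle si et seulement si $[\widetilde{P+Q},\widetilde{P+Q}]=0$. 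Par bilin\'earit\'e du crochet de Schouten et sa sym\'etrie sur les tenseurs contravariants de degr\'e $2$, on d\'eveloppe
\[
[\tilde{P}+\tilde{Q},\tilde{P}+\tilde{Q}]
=[\tilde{P},\tilde{P}]+2\,[\tilde{P},\tilde{Q}]+[\tilde{Q},\tilde{Q}].
\]
Puisque par hypoth\`ese $P$ et $Q$ sont elles-m\^emes des ancres de Poisson partielles, le Th\'eor\`eme~\ref{T_CaracterisationStructurePartielleDePoisson} appliqu\'e s\'epar\'ement donne $[\tilde{P},\tilde{P}]=[\tilde{Q},\tilde{Q}]=0$. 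Il s'ensuit que $[\widetilde{P+Q},\widetilde{P+Q}]=2\,[\tilde{P},\tilde{Q}]$, d'o\`u l'\'equivalence cherch\'ee entre compatibilit\'e de $P$ et $Q$ et annulation de $[\tilde{P},\tilde{Q}]$.

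Le point qui demande le plus de soin, mais qui n'est pas r\'eellement un obstacle, est la justification que la bilin\'earit\'e et la sym\'etrie du crochet de Schouten sur les tenseurs $(2,0)$ \`a valeurs dans $\mathfrak{X}(M)$ restent valides dans le cadre adapt\'e avec un sous-fibr\'e faible $T^\flat M$ en lieu et place du cotangent total; ceci est cependant \'etabli en d\'etail dans \cite{CaPe1}, Chapitre~7, Th\'eor\`eme~7.51, et peut \^etre invoqu\'e directement. Tout le reste se r\'eduit \`a un d\'eveloppement formel bilin\'eaire, ce qui rend la preuve essentiellement imm\'ediate une fois la caract\'erisation du Th\'eor\`eme~\ref{T_CaracterisationStructurePartielleDePoisson} en main.
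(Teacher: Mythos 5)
Votre démonstration est correcte et suit essentiellement la même démarche que celle du papier : développement bilinéaire de $[\underline{\tilde{P}}+\underline{\tilde{Q}},\underline{\tilde{P}}+\underline{\tilde{Q}}]$ en utilisant la symétrie du crochet de Schouten en degré $2$, puis application du Théorème \ref{T_CaracterisationStructurePartielleDePoisson} à $P$, $Q$ et $P+Q$ pour conclure. La vérification préliminaire que $P+Q$ est bien une quasi-ancre de Poisson est un petit ajout bienvenu que le papier laisse implicite.
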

\begin{proof}
Soient $P : T^{\flat}M \to TM$ et $Q : T^{\flat}M \to TM$ deux ancres de Poisson. \\
Puisque le crochet de Schouten est bilinéaire symétrique en degré $2$ (comme on l'a déjà vu avant l'énoncé du Théorème), on a:
\begin{eqnarray}
\label{eq_CrochetDeLaSommeDePEtDeQ}
[\underline{\tilde{P}}+\underline{\tilde{Q}},\underline{\tilde{P}}+\underline{\tilde{Q}}]=[\underline{\tilde{P}},\underline{\tilde{P}}]+2[\underline{\tilde{P}},\underline{\tilde{Q}}]+[\underline{\tilde{Q}},\underline{\tilde{Q}}].
\end{eqnarray}
Le fait que $P$ et $Q$ soient des ancres de Poisson se traduit, d'après la relation (\ref{eq_Schouten})
et le théorème
 \ref{T_CaracterisationStructurePartielleDePoisson} on a  $[\underline{\tilde{P}},\underline{\tilde{P}}]=[\underline{\tilde{Q}},\underline{\tilde{Q}}]=0$. La compatibilité de $P$ et de $Q$ se traduit pour les mêmes raisons par $[\underline{\tilde{P}}+\underline{\tilde{Q}},\underline{\tilde{P}}+\underline{\tilde{Q}}]=0$. Le résultat découle alors de la formule (\ref{eq_CrochetDeLaSommeDePEtDeQ}).\\
\end{proof}

\subsubsection{Exemples de variétés de Poisson partielles compatibles}
\label{___ExemplesDeVarietesDePoissonPartiellesCompatibles}

\begin{example}
\label{Ex_CrochetsCompatiblesSurDualAlgebreLieBanachNonNecessairementReflexive}
{\sf Crochets de Poisson compatibles sur le dual d'une algèbre de Banach Lie-Poisson non nécessairement réflexive}\\
Les crochets $\{.,.\}_{\operatorname{LP}}$ (structure linéaire de Lie-Poisson) et $\{.,.\}_{m_0}$ sur le dual $\mathfrak{g}^\star$ de l'algèbre de Lie de Banach non nécessairement réflexive $\mathfrak{g}$ donnés à l'exemple \ref{Ex_CrochetsSurDualAlgebreLieBanachNonNecessairementReflexive}
sont compatibles car leur somme n'est autre que le crochet $\{.,.\}_{\operatorname{LP}}$ translaté de l'origine à $-m_0$.  \\
$\mathfrak{g}^\star$ munies des ancres associées à ces deux crochets est une variété $\operatorname{PQ}$ partielle.
\end{example}


\begin{example}
\label{Ex_StructuresDePoissonPartielleCompatibleSurPinftyM}
On reprend l'espace $P^\infty(M)$ défini à l'Exemple~\ref{Ex_StructureDePoissonPartielleSurPinftyM}.\\
On munit la variété $M$ de deux structures de Poisson compatibles données par des tenseurs $\Lambda_1$ et $\Lambda_2$, ce qui donne en particulier que $\Lambda_1 + \Lambda_2$ est encore de Poisson.\\
On obtient alors sur la variété adaptée $P^\infty(M)$ deux structures de Poisson partielles qui restent compatibles puisque leur somme est encore une structure de même type. 
\end{example}

\begin{example}
\label{Ex_LimiteDirecteDeStructuresDeBanachPoissonCompatibles}
{\sf Limite directe de structures de Banach-Poisson compatibles.}\\
On se réfère à l'exemple \ref{Ex_LimiteDirecteDeStructuresPartiellesDePoissonCompatiblesSurDesVarietesDEBanach}
 et on considère deux suite directes $\left( T^\flat M_i,P_i,\{.,.\}_{P_i} \right) _{i \in \mathbb{N}}$ et $\left( T^\flat M_i,Q_i,\{.,.\}_{Q_i} \right) _{i \in \mathbb{N}}$ de structures de Banach-Poisson partielles.\\
Les relations \textbf{(1)} et \textbf{(3)} sont trivialement vérifiées.
En ce qui concerne la relation \textbf{(2)}, pour $\alpha_i \in T_{x_i}^\flat M_i$, on pose $X_i=P_i \left( \alpha_i \right) $ et $Y_i=Q_i \left( \alpha_i \right) $~; on a par linéarité $T\varepsilon_i^{i+1} \left( X_i+Y_i \right) = T\varepsilon_i^{i+1} \left( X_i \right) + T\varepsilon_i^{i+1} \left( Y_i \right)$ ou encore 
\[
T\varepsilon_i^{i+1} \circ \left( P_i \left( \alpha_i \right) + Q_i \left( \alpha_i \right) \right) 
= T\varepsilon_i^{i+1} \circ P_i \left( \alpha_i \right) + T\varepsilon_i^{i+1}  \circ Q_i \left( \alpha_i \right).
\]
Puisque $\alpha_i = T^\star \varepsilon_i^{i+1} \left( \alpha_{i+1} \right) $, on en déduit que
\[
\begin{array}{cl}
	& T\varepsilon_i^{i+1} \circ \left( P_i \left( T^\star \varepsilon_i^{i+1} \left( \alpha_{i+1} \right)  \right) 
+ Q_i \left( T^\star \varepsilon_i^{i+1} \left( \alpha_{i+1} \right)  \right) \right) \\ 
=	& T\varepsilon_i^{i+1} \circ P_i \left( T^\star \varepsilon_i^{i+1} \left( \alpha_{i+1} \right)  \right) 
+ T\varepsilon_i^{i+1}  \circ Q_i \left( T^\star \varepsilon_i^{i+1} \left( \alpha_{i+1} \right)  \right)
\end{array}
\]
ce qui donne
\[
\begin{array}{cl}
	& T\varepsilon_i^{i+1} \circ 
	\left( 
	\left( P_i + Q_i \right) 
	\left( T^\star \varepsilon_i^{i+1} 
	\left( \alpha_{i+1} \right)  
	 \right)
	 \right) \\ 
=	& T\varepsilon_i^{i+1} \circ P_i \left( T^\star \varepsilon_i^{i+1} \left( \alpha_{i+1} \right)  \right) 
+ T\varepsilon_i^{i+1}  \circ Q_i \left( T^\star \varepsilon_i^{i+1} \left( \alpha_{i+1} \right)  \right)
\end{array}
\]
ou encore
\[
T\varepsilon_i^{i+1} \circ \left( P_i+Q_i \right) \circ T^\star \varepsilon_i^{i+1}
=
T\varepsilon_i^{i+1} \circ P_i \circ T^\star \varepsilon_i^{i+1}
+
T\varepsilon_i^{i+1} \circ Q_i \circ T^\star \varepsilon_i^{i+1}.
\]
On obtient finalement en utilisant la relation \textbf{(2)} pour $P_i$ et $Q_i$ au second membre~:
\[
T\varepsilon_i^{i+1} \circ \left( P_i+Q_i \right) \circ T^\star \varepsilon_i^{i+1}
=
P_{i+1}+Q_{i+1}.
\]
Il existe alors un sous fibré faible $p^{\flat}:T^{\flat} M\to M$ de $p'_M : T'M \to M$, où $M = \underrightarrow{\lim} M_i$ est une variété adaptée, et un morphisme $P+Q:T^{\flat
}M\to TM$ (où $P$ et $Q$ sont obtenus ainsi qu'à l'exemple \ref{Ex_LimiteDirecteDeStructuresDeBanachPoissonCompatibles}) tel que $ \left( M,\mathfrak{A}_{P+Q}(M),\{.,.\}_{P+Q} \right) $ est une structure de Poisson partielle et ainsi $M$ est une variété $\operatorname{PQ}$ partielle. 
\end{example}

\subsubsection{Famille de crochets d'une variété $\operatorname{PQ}$ partielle}
\label{___FamilleDeCrochetsDUneVarietePQPartielle}

Soient $P : T^{\flat}M \to TM$ et $Q : T^{\flat}M \to TM$ deux ancres de Poisson compatibles. 
On considère pour $\lambda \in [0,+\infty[ \cup \{\infty\}$ l'application $P_\lambda = P + \lambda Q : T^{\flat}M \to TM $ où $P_\infty = Q$. 
Pour $\lambda$ quelconque dans $[0,+\infty[ \cup \{\infty\}$, le morphisme $P_\lambda = P + \lambda Q$ est aussi une ancre de Poisson.

Par le même type de raisonnement que dans la preuve du Théorème
 \ref{T_CaracterisationCompatibilitePQAvecNulliteCrochet} on obtient:

\begin{proposition}\label{P_FamilleDeCrochetPQPartiellePoisson} Soient $P : T^{\flat}M \to TM$ et $Q : T^{\flat}M \to TM$ deux ancres de Poisson compatibles. Alors  $P_\lambda$ est une ancre de Poisson pour tout $\lambda\in [0,+\infty[ \cup \{\infty\}$.\\
\end{proposition}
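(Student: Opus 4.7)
Mon plan est d'\'etablir ce r\'esultat par application directe des Th\'eor\`emes \ref{T_CaracterisationStructurePartielleDePoisson} et \ref{T_CaracterisationCompatibilitePQAvecNulliteCrochet}, en suivant exactement le sch\'ema utilis\'e dans la d\'emonstration de ce dernier, comme sugg\'er\'e par l'indication pr\'ec\'edant l'\'enonc\'e.

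Pour $\lambda \in [0,+\infty[$, je commencerais par observer que $P_\lambda = P + \lambda Q$ est trivialement un morphisme antisym\'etrique $T^\flat M \to TM$ comme combinaison lin\'eaire de tels morphismes ; c'est donc une quasi-ancre de Poisson au sens de la D\'efinition \ref{D_MorphismeAntisymetrique}, dont le tenseur antisym\'etrique $(2,0)$ associ\'e (cf. Notation \ref{N_UnderlineP_Psharp}) est $\underline{\tilde{P_\lambda}} = \underline{\tilde{P}} + \lambda \underline{\tilde{Q}}$.

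L'\'etape centrale consiste alors \`a exploiter la bilin\'earit\'e et la sym\'etrie en degr\'e $2$ du crochet de Schouten (rappel\'ees juste avant l'\'enonc\'e du Th\'eor\`eme \ref{T_CaracterisationCompatibilitePQAvecNulliteCrochet}) pour d\'evelopper
\[
[\underline{\tilde{P_\lambda}}, \underline{\tilde{P_\lambda}}] = [\underline{\tilde{P}}, \underline{\tilde{P}}] + 2\lambda\, [\underline{\tilde{P}}, \underline{\tilde{Q}}] + \lambda^2\, [\underline{\tilde{Q}}, \underline{\tilde{Q}}].
\]
Les trois termes s'annulent~: les deux termes extr\^emes par le Th\'eor\`eme \ref{T_CaracterisationStructurePartielleDePoisson} appliqu\'e \`a $P$ et \`a $Q$ (qui sont toutes deux suppos\'ees \^etre des ancres de Poisson partielles), et le terme crois\'e central par la compatibilit\'e de $P$ et $Q$ combin\'ee au Th\'eor\`eme \ref{T_CaracterisationCompatibilitePQAvecNulliteCrochet}. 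Une application r\'eciproque du Th\'eor\`eme \ref{T_CaracterisationStructurePartielleDePoisson} \`a $P_\lambda$ fournit alors la conclusion cherch\'ee.

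Le cas $\lambda = \infty$ se traite s\'epar\'ement, mais trivialement, puisque $P_\infty = Q$ est d\'ej\`a une ancre de Poisson partielle par hypoth\`ese. Il n'y a pas ici d'obstacle r\'eel~: tout le travail conceptuel a \'et\'e effectu\'e dans le Th\'eor\`eme \ref{T_CaracterisationCompatibilitePQAvecNulliteCrochet}, et la proposition n'en est qu'un corollaire formel reposant sur la structure d'espace vectoriel des tenseurs dont le crochet de Schouten avec lui-m\^eme est nul (lorsque les termes crois\'es s'annulent).
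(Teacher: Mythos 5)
Votre démonstration est correcte et suit exactement la voie que le texte indique (\og par le même type de raisonnement que dans la preuve du Théorème \ref{T_CaracterisationCompatibilitePQAvecNulliteCrochet}\fg)~: développement bilinéaire symétrique du crochet de Schouten de $\underline{\tilde{P}}+\lambda\underline{\tilde{Q}}$ avec lui-même, annulation des trois termes, puis application du Théorème \ref{T_CaracterisationStructurePartielleDePoisson}, le cas $\lambda=\infty$ étant trivial. Rien à redire.
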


\subsubsection{Distributions caractéristiques associées}
\label{____DistributionsCaracteristiquesAssocieesPQ}

Considérons  $p^{\flat}:T^{\flat}M\to M$ un sous-fibré faible de
$p_{M}^{\prime}:T^{\prime}M\to M$ au dessus de la variété de Banach $M$.\\
Soient deux ancres de Poisson partielles $P : T^{\flat}M \to TM$ et $Q : T^{\flat}M \to TM$ compatibles.\\
Soit pour $\lambda \in [0,+\infty[ \cup \{\infty\}$  on pose  $P_\lambda = P + \lambda Q : T^{\flat}M \to TM $ où $P_\infty = Q$.  Même si la distribution caractéristique de $P$ et $Q$ vérifie la propriété {\bf FSI},   en général, la distribution caractéristique de $P_\lambda$ ne vérifie  la propriété  {\bf FSI} même dans le contexte Banachique. En effet, par example si $T$ et $S$ sont deux opérateurs linéaires  d'un espace Banach $\mathbb{E}$ dans un espace de Banach $\mathbb{F}$  avec  noyau  scindé, en général le noyau de $T+\lambda S$, pour tout $\lambda\in [0,+\infty[$ n'est pas scindé. Il en résulte que  les hypothèses du Theorème \ref{T_FeuilletageSurUneVarieteDeBanachPoissonPartielle}  ne sont pas satisfaites en général  pour $P_\lambda$ pour  $\lambda\in [0,+\infty[$. \\
Soit  $\mathfrak{F}(\mathbb{E},\mathbb{F})$ \index{FmathfrakEF@$\mathfrak{F}(\mathbb{E},\mathbb{F})$} (resp. $\mathfrak{F}_+(\mathbb{E},\mathbb{F})$\index{FmathfrakPlusEF@$\mathfrak{F}_+(\mathbb{E},\mathbb{F})$}) l'ensemble  des opérateurs  Fredholm
(resp. semi-Fredholm supérieurs) d'un espace de Banach $\mathbb{E}$ dans un espace de Banach $\mathbb{F}$
\footnote{Un opérateur continu $T:E\to F$  est un opérateur \emph{semi-Fredholm supérieur} si son noyau est de dimension finie  et si son image est fermée. Si, de plus, son image est de codimension finie alors $T$ est \emph{Fredholm}. En particulier, $\mathfrak{F}(\mathbb{E},\mathbb{F})\subset \mathfrak{F}_+(\mathbb{E},\mathbb{F})$.\\
L'\emph{indice} d'un opérateur de Fredholm $T$ est défini par 
$\operatorname{ind}T
= \dim (\ker T) - \operatorname{codim} (\operatorname{im} T).$.
}. 
Rappelons que l'ensemble  $\mathfrak{F}(\mathbb{E},\mathbb{F})$  (resp.  $\mathfrak{F}_+(\mathbb{E},\mathbb{F})$) est ouvert dans l'ensemble $\mathcal{L}(\mathbb{E},\mathbb{F})$ des opérateurs continus de $\mathbb{E}$ dans $\mathbb{F}$ (cf. \cite{Mul07},  Théorème 16.17 et Corollaire 18.2). Par suite, si $T$ est un opérateur de Fredholm (resp. semi-Fredholm supérieur), pour tout opérateur continu  $S$,  il existe $\varepsilon >0$ tel que,  pour tout $\lambda\in [0,\varepsilon[ $, $T+\lambda S$ est  Fredholm  (resp. semi-Fredholm supérieur). Par suite, si $P$ est une  ancre de Poisson partielle qui est un opérateur de  Fredholm   (resp. semi-Fredholm superieur) en restriction à chaque fibre alors les hypothèses du Théorème 
\ref{T_FeuilletageSurUneVarieteDeBanachPoissonPartielle} sont satisfaites pour $\lambda $ assez petit. {\it Malheureusement cet ensemble n'est pas fermé} et on ne peut pas utiliser un argument de connexité dans ce cadre. Néanmoins, nous avons:
\begin{theorem}
\label{T_IntegrabiliteDistributionAssocieeAFaisceauStructuresPQ}
Soient deux ancres de Poisson partielles $P : T^{\flat}M \to TM$ et $Q : T^{\flat}M \to TM$ compatibles sur une variété adaptée $M$. Alors  la distribution caractéristique $\mathcal{D}_{P_\lambda}$ est {\bf FSI} dans les situations suivantes
\begin{enumerate}
\item[(1)]  
$M$ est une variété Banachique et $P_x: T_x^{\flat}M \to T_xM$  appartient à $\mathfrak{F} \left( T_x^\flat M, T_x M \right) $ et $Q_x$ est un opérateur de rang fini  pour tout $x\in M$.
\item[(2)] 
$M$ est une variété Banachique et $P_x: T_x^{\flat}M \to T_xM$ appartient à  $ \mathfrak{F}_+ \left( T_x^\flat M, T_xM \right) \setminus\mathfrak{F} \left( T_x^\flat M, T_x M \right) $ pour tout  $x\in M$
\item[(3)]  
$M = \underrightarrow{\lim} M_i$
\footnote{Dans la contexte de l'Exemple \ref{Ex_LimiteDirecteDeStructuresDeBanachPoissonCompatibles}.} où  $\left( T^\flat M_i,P_i,\{.,.\}_{P_i} \right) _{i \in \mathbb{N}}$ et $\left( T^\flat M_i,Q_i,\{.,.\}_{Q_i} \right) _{i \in \mathbb{N}}$ sont deux suites directes de structures de Banach-Poisson partielles telles que, pour tout $i\in \mathbb{N}$,  $P_i$  et $Q_i$ vérifient  les hypothèses de la situation (1) ou  de la situation (2).
\end{enumerate}
\end{theorem}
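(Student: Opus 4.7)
Le plan consiste \`a \'etablir les trois conditions d\'efinissant la propri\'et\'e \textbf{FSI} (D\'efinition \ref{D_LocalSymplectique}) pour chaque $x\in M$ et chaque $\lambda\in [0,+\infty[\cup\{\infty\}$, \`a savoir que $\ker (P_\lambda)_x$ est scind\'e dans $T_x^\flat M$, que $(P_\lambda)_x(T_x^\flat M)$ est ferm\'ee dans $T_xM$ et que l'application induite $\widehat{(P_\lambda)_x}$ est un isomorphisme~; l'int\'egrabilit\'e d\'ecoulera alors du Th\'eor\`eme \ref{T_FeuilletageSurUneVarieteDeBanachPoissonPartielle} (dans les cas (1) et (2)) ou du Th\'eor\`eme \ref{T_LimiteDirecteDeVarietesDePoisson} (cas (3)).

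Dans le cas (1), l'argument-clef sera la stabilit\'e de la classe de Fredholm sous perturbation de rang fini : comme $P_x$ est Fredholm et $Q_x$ de rang fini, $(P_\lambda)_x=P_x+\lambda Q_x$ reste Fredholm pour tout $\lambda\in [0,+\infty[$~; pour $\lambda=\infty$, on observera directement qu'un op\'erateur de rang fini a un noyau de codimension finie (donc scind\'e) et une image de dimension finie (donc ferm\'ee), ce qui en fait un op\'erateur de Fredholm. Les propri\'et\'es caract\'eristiques des op\'erateurs de Fredholm (noyau de dimension finie, image ferm\'ee, isomorphisme quotient-image par le th\'eor\`eme de l'application ouverte) fournissent imm\'ediatement les trois conditions \textbf{FSI}.

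Dans le cas (2), l'obstacle principal sera d'\'etablir que $(P_\lambda)_x\in \mathfrak{F}_+$ pour tout $\lambda$, alors que la discussion pr\'ec\'edant le th\'eor\`eme ne garantit cette propri\'et\'e que localement et souligne que l'ensemble concern\'e n'est pas ferm\'e. Mon approche consistera \`a exploiter soit une condition structurelle sur $Q_x$ (par exemple la compacit\'e, permettant l'application des th\'eor\`emes de perturbation de Kato pour pr\'eserver $\mathfrak{F}_+$ sur tout $[0,+\infty[\cup\{\infty\}$), soit l'antisym\'etrie des ancres de Poisson qui relie $\ker (P_\lambda)_x$ et $(\operatorname{im}(P_\lambda)_x)^{\operatorname{ann}}$ par dualit\'e et contraint leur \'evolution simultan\'ee sous l'addition de $\lambda Q_x$. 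Une fois $(P_\lambda)_x\in\mathfrak{F}_+$ acquise, son noyau (de dimension finie) est automatiquement scind\'e et son image ferm\'ee, d'o\`u la propri\'et\'e \textbf{FSI}.

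Dans le cas (3), on dispose par hypoth\`ese, pour chaque entier $i$, d'un couple $(P_i,Q_i)$ relevant de (1) ou (2), donc de la propri\'et\'e \textbf{FSI} pour $\mathcal{D}_{(P_\lambda)_i}$ dans $M_i$ par les cas pr\'ec\'edents. Je proc\'ederai par passage \`a la limite directe, en utilisant la compatibilit\'e des ancres avec les inclusions $\varepsilon_i^{i+1}$ \'etablie \`a l'Exemple \ref{Ex_LimiteDirecteDeStructuresDeBanachPoissonCompatibles} pour construire $P_\lambda$ sur $M=\underrightarrow{\lim}M_i$, puis en appliquant le Th\'eor\`eme \ref{T_LimiteDirecteDeVarietesDePoisson} pour conclure \`a l'int\'egrabilit\'e de la distribution $\mathcal{D}_{P_\lambda}=\underrightarrow{\lim}\mathcal{D}_{(P_\lambda)_i}$ et au caract\`ere \textbf{FSI} des feuilles symplectiques faibles obtenues comme limites directes de celles de chaque niveau.
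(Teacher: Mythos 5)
Vos cas (1) et (3) suivent pour l'essentiel la d\'emonstration du papier : perturbation de Fredholm par un op\'erateur de rang fini puis Th\'eor\`eme \ref{T_FeuilletageSurUneVarieteDeBanachPoissonPartielle} pour (1), et Th\'eor\`eme \ref{T_LimiteDirecteDeVarietesDePoisson} pour (3). Signalons toutefois dans le cas (1) une impr\'ecision : un op\'erateur de rang fini entre espaces de Banach de dimension infinie n'est \emph{pas} un op\'erateur de Fredholm (son noyau est de dimension infinie). Ce qui sauve votre argument pour $\lambda=\infty$, ce sont pr\'ecis\'ement les propri\'et\'es que vous \'enum\'erez --- noyau de codimension finie donc scind\'e, image de dimension finie donc ferm\'ee, bijection induite entre espaces de dimension finie --- qui suffisent pour \textbf{FSI} sans que l'op\'erateur soit Fredholm.

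Le cas (2) comporte en revanche une lacune r\'eelle. Vous identifiez correctement l'obstacle --- garantir $(P_\lambda)_x\in\mathfrak{F}_+$ pour \emph{tout} $\lambda$ et non seulement pour $\lambda$ petit --- mais aucune des deux pistes propos\'ees ne le l\`eve : la compacit\'e de $Q_x$ n'est pas une hypoth\`ese de l'\'enonc\'e, et l'argument de dualit\'e par antisym\'etrie n'est pas d\'evelopp\'e au point de produire une conclusion. L'id\'ee qui manque, et que le papier utilise, est un argument de connexit\'e fond\'e sur deux faits : d'une part $\mathfrak{F}_+(\mathbb{E},\mathbb{F})\setminus\mathfrak{F}(\mathbb{E},\mathbb{F})$ est ferm\'e dans $\mathcal{L}(\mathbb{E},\mathbb{F})$ (cons\'equence de la preuve du Corollaire~18.2 de \cite{Mul07}), d'autre part l'indice est localement constant sur l'ouvert $\mathfrak{F}$, ce qui interdit \`a la famille continue $\lambda\mapsto P_x+\lambda Q_x$ de passer de $\mathfrak{F}_+\setminus\mathfrak{F}$ dans $\mathfrak{F}$ (l'op\'erateur limite aurait un indice fini, contradiction). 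L'ensemble des $\lambda\in[0,+\infty[$ tels que $P_x+\lambda Q_x\in\mathfrak{F}_+\setminus\mathfrak{F}$ est alors \`a la fois ouvert et ferm\'e, donc \'egal \`a $[0,+\infty[$ tout entier. Sans cet argument (ou un substitut explicite), l'\'etape consistant \`a consid\'erer $(P_\lambda)_x\in\mathfrak{F}_+$ comme acquise reste injustifi\'ee, et c'est pr\'ecis\'ement le c{\oe}ur de la preuve dans cette situation.
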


\begin{proof} 
 Dans la situation (1), d'après le Théorème de Kato (cf. \cite{Kat58}), pour tout opérateur compact $K: \mathbb{E}\to \mathbb{F}$,  si $T\in \mathfrak{F}(\mathbb{E},\mathbb{F})$, alors  $T+K \in \mathfrak{F}(\mathbb{E},\mathbb{F})$. Comme $\lambda K$ est un opérateur compact pour tout $\lambda\in [0,\infty[$, il s'ensuit que $T+\lambda K \in \mathfrak{F}(\mathbb{E},\mathbb{F})$. Mais l'image d'un opérateur compact n'est pas fermée  s'il n'est pas de rang fini. Pour que les hypothèses du Théorème \ref{T_FeuilletageSurUneVarieteDeBanachPoissonPartielle} soient vérifiées pour $\lambda=\infty$, il faut donc que $Q_x$ soit de rang fini. Dans ce cas ce théorème s'applique  à chaque $P_\lambda$ pour $\lambda\in [0,\infty[\cup\{\infty\}$ ce qui prouve (1).

Dans la situation (2) bien que $\mathfrak{F}_+(\mathbb{E},\mathbb{F})$ ne soit pas fermé,    compte tenu de la preuve du Corollaire 18.2 de \cite{Mul07}, l'ensemble   $\mathfrak{F}_+(\mathbb{E},\mathbb{F})\setminus \mathfrak{F}(\mathbb{E},\mathbb{F})$ est fermé dans $\mathcal{L}(\mathbb{E},\mathbb{F})$. Pour $x\in M$ fixé, si $ P_x\in \mathfrak{F}_+(T_x^\flat M, T_xM)\setminus\mathfrak{F}(T_x^\flat M, T_xM)$,  nous avons vu qu'il existe $
\varepsilon > 0$ tel que $P_x+\lambda Q_x\in \mathfrak{F}_+(T_x^\flat M, T_xM)$. Comme  $\mathfrak{F}_+(\mathbb{E},\mathbb{F})\setminus \mathfrak{F}(\mathbb{E},\mathbb{F}) $ est fermé, 
 $\{\lambda\in [0,\varepsilon[\;: \;  P_x+\lambda Q_x\in \mathfrak{F}_+(T_x^\flat M, T_xM)\setminus\mathfrak{F}(T_x^\flat M, T_xM)\}$ est fermé et
\[
\exists \lambda_0 \in [0,\varepsilon[: \;
\forall \lambda\in [0,\lambda_0], \;
P_x+\lambda Q_x \in \mathfrak{F}_+ (T_x^\flat M, T_xM)\setminus\mathfrak{F}(T_x^\flat M, T_xM).
\]
De plus, puisque $\mathfrak{F}(\mathbb{E},\mathbb{F})$ est ouvert, il existe un intervalle $]\lambda_0,\lambda_1[ \subset  [0,\varepsilon[$ tel que, sur cet intervalle, $P_x+\lambda Q_x\in \mathfrak{F}(T_x^\flat M, T_xM)$. Mais alors, pour toute suite $\lambda_n \in ]\lambda_0,\lambda_1[$ qui converge vers $\lambda_0$ la suite d'opérateurs $ \left( P_x+\lambda_n Q_x \right) $ converge vers $P_x+\lambda_0 Q_x$. Comme l'indice des opérateurs de Fredholm est constant sur chaque composante connexe de $\mathfrak{F}(T_x^\flat M, T_xM)$, il en résulte que l'indice de l'opérateur $P_x+\lambda_0 Q_x$ est fini et donc ne peut pas appartenir à  $\mathfrak{F}_+(T_x^\flat M, T_xM)\setminus\mathfrak{F}(T_x^\flat M, T_xM)$ d'où une contradiction. \\
Cela implique donc que  $P_x+\lambda Q_x$ appartient à  $\mathfrak{F}_+(T_x^\flat M, T_xM)\setminus\mathfrak{F}(T_x^\flat M, T_xM)$ pour tout $\lambda\in [0,\varepsilon[$ et comme cet 
ensemble est fermé,  le résultat est encore vrai pour $\lambda\in [0,\epsilon]$.  Le même type de raisonnement permet de montrer que l'ensemble des  $\lambda\in [0,\infty[$ tel que  $P_x+\lambda Q_x$ appartient à  $\mathfrak{F}_+(T_x^\flat M, T_xM)\setminus\mathfrak{F}(T_x^\flat M, T_xM)$ est à la fois ouvert et fermé.  Par connexité, cette propriété est donc vraie pour tout $\lambda\in [0,\infty[$.\\
Par suite,   on peut donc  appliquer le Théorème  \ref{T_FeuilletageSurUneVarieteDeBanachPoissonPartielle} à chaque $P_\lambda$ pour $\lambda\in [0,\infty[\cup\{\infty\}$.

Dans la situation (3), en se référant au contexte de l'Exemple \ref{Ex_LimiteDirecteDeStructuresDeBanachPoissonCompatibles}, on applique le Théorème \ref{T_LimiteDirecteDeVarietesDePoisson}.
\end{proof}
 
\begin{remark}
\label{R_Faisceau2dimensions}  
\'Etant données deux structures de Poissons compatibles  $P$ et $Q$ sur une variété $M$ de  dimension finie, certains  auteurs considèrent un faisceau de type $\lambda P+\mu Q$ à deux paramètres réels $(\lambda,\mu) \in \mathbb{R}\setminus \{(0,0)\}$.   Si $P$ et $Q$ sont des ancres de Poisson partielles sur une variété adaptée $M$, on pourrait aussi définir une ancre de Poisson partielle à deux  paramètres $\lambda P+ \mu Q$ sur $M$. Mais   pour  $\lambda\not=0$ (resp. $\mu\not=0$) le faisceau $\lambda P+ \mu Q$ a les mêmes propriétés que $P+\frac{\mu}{\lambda} Q$ (resp. $P+\frac{\lambda}{\mu} Q$ ). En fait,  si $P$ et $Q$ sont compatibles,  le même type de raisonnement que dans le paragraphe \ref{___FamilleDeCrochetsDUneVarietePQPartielle} entraine que, pour tout $(\lambda,\mu) \in \mathbb{R}\setminus \{(0,0)\}$, $\lambda P+ \mu Q$  est aussi une ancre de Poisson partielle sur $M$. A noter que  les mêmes types de raisonnement que dans la preuve  du Théorème \ref{T_IntegrabiliteDistributionAssocieeAFaisceauStructuresPQ} permet de montrer que dans les même situations la distribution caractéristique de  $\lambda P+ \mu Q$ est aussi {\bf FSI} pour tout $(\lambda,\mu) \in \mathbb{R}\setminus \{(0,0)\}$.
\end{remark}
 
\begin{example}
\label{Ex_LimiteDirecteDeStructuresVariétésfiniesPoissonCompatibles}
{\sf Suite croissante de variétés de Poisson compatibles de dimension finie}.\\
Si  $\left( T^\flat M_i,P_i,\{.,.\}_i \right) _{i \in \mathbb{N}}$ et $\left( T^\flat M_i,Q_i,\{.,.\}_i \right) _{i \in \mathbb{N}}$  est une suite croissante de variétés de Poisson compatibles, alors  pour tout entier $i$, les situations  (1) et (2)  du Théorème \ref{T_IntegrabiliteDistributionAssocieeAFaisceauStructuresPQ} sont vérifiées et ainsi  la situation (3) est aussi  vérifiée. Ainsi sur  $M = \underrightarrow{\lim} M_i$, on a un faisceau $P_\lambda$ de  structures de Poisson partielles dont la distribution caractéristique est {\bf FSI}.
\end{example}

\begin{example}\label{Ex_FaisceauPoissonAssociéDeformationAlgebroide}
{\sf Faisceaux d'ancres de Poisson partielles associé à une déformation d'algébroides de Lie}.\\
Rappelons qu'une déformation d'un algébroïde de Lie $ \left( E,M,\rho,[.,.]_E \right) $ est la donnée d'une famille à un paramètre  $ \{ \left( E,M, \rho_\lambda, [.,.]_\lambda \right) \}\lambda\in [0,\infty[$  de structure 
d'algebroïdes sur $E$ telle que $[.,.]_0=[.,.]_E$ (cf.\cite{CrMo08}). Cette définition s'étend naturellement aux algebroïdes adaptés. Par ailleurs, un tenseur $N$ de $E$ de type $(1,1)$ est un tenseur de Nijenhuis si 
\[
T(N)(s,s'):=[Ns,Ns']_E-N[Ns,s']_E- N[s,Ns']_E+N^2[s,s']_E=0
\]
où $T(N)$ est la torsion de $N$ (cf. \ref{____TenseurDeNijenhuis}).\\
Dans ces conditions, comme en dimension finie,  on peut définir une structure d'algébroïde de Lie $ \left( E,M,\rho_N:=\rho\circ N,[.,.]_N \right) $ où 
\begin{center}
$[s,s']_N:=[Ns,s']_E+[s,Ns']_E-[Ns,s']_E$.
\end{center} 
(cf. \cite{CrFe11} par exemple). 
On pose $N_\lambda=\operatorname{Id}+\lambda N$  pour tout $\lambda\in [0,\infty[$. Considérons le crochet 
$[s,s']_{N_\lambda}$. Comme $T(N_\lambda)(s,s')=0$ pour tout $\lambda \in [0,+\infty[$, il en résulte que $ \left( E,M, \rho_{N_\lambda}, [.,.]_{N_\lambda} \right) $ est un algébroïde de Lie. 
Soit $P$ (resp. $P_N$) l'ancre de Poisson sur $T^\flat E^\star$ associée à la structure d'algébroïde  de Lie $ \left( E,M,\rho,[.,.]_E \right) $ (resp.  $ \left( E,M,\rho_N,[.,.]_N \right) $. Alors $P_\lambda=P+\lambda P_N$ , $\lambda\in [0,\infty[\cup\{\infty\}$ est un faisceau d'ancres de Poisson sur $T^\flat E^\star$. 
En particulier, si la fibre  $\mathbb{E}$ de $E$ est un espace de Hilbert ou si $N$ et $P$ sont des opérateurs semi-Fredholm alors la distribution caractéristique de $P_\lambda$ est {\bf FSI}.
\end{example}

\subsubsection{Fonctions de Casimir et chaîne de Magri-Lénard} 

Soit pour $\lambda \in [0,+\infty[ \cup \{\infty\}$, l'ancre de Poisson $P_\lambda = P + \lambda Q$ où $P : T^{\flat}M \to TM$ et $Q : T^{\flat}M \to TM$ sont deux ancres de Poisson compatibles.  Pour tout $\lambda,\in [0,+\infty[\cup\{\infty\}$, considérons le faisceau associé $P_\lambda$. On suppose dans ce paragraphe que la distribution caractéristique $\mathcal{D}_{P_\lambda}$ de $P_\lambda$ est {\bf FSI} et supplémentée \footnote{Cette dernière hypothèse est toujours vérifiée si $\mathbb{E}$ est un espace de Hilbert  et  aussi dans les situations (2) et (3) du Théorème \ref{T_IntegrabiliteDistributionAssocieeAFaisceauStructuresPQ}.}, et  on note $\Sigma_\lambda$ l'ouvert  dense sur lequel cette distribution est maximale.

\begin{proposition}\label{P_FonctionsCasimirFaisceau} 
Pour tout  $x_0\in \Sigma_{\lambda}$, sous les hypothèses précédentes,  il existe un voisinage  $I\times U\subset\mathbb{R}\times \Sigma_{\lambda_0}$ de $(\lambda_0,x_0)$ tel que pour tout $(\lambda,x) \in I\times U$  on a:\\
toute  fonction Casimir $f$ sur $U$  pour $P_{\lambda_0}$ s'étend en une fonction de Casimir $f_\lambda$  de $P_\lambda$ sur $U$,  pour tout $\lambda\in I$.\\
 \end{proposition}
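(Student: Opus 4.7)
The plan is to reduce the problem to a parametric foliation argument in which $f$ is transported along a fixed transversal. First I would localize: since $x_0 \in \Sigma_{\lambda_0}$, there is a neighborhood $U_0$ of $x_0$ on which $\ker P_{\lambda_0}$ is a closed complemented subbundle of $T^\flat M|_{U_0}$ and $\mathcal{D}_{P_{\lambda_0}} = P_{\lambda_0}(T^\flat M|_{U_0})$ is a closed subbundle of $TM|_{U_0}$. Because $P_\lambda = P + \lambda Q$ depends linearly (hence smoothly) on $\lambda$, and because the situations in Theorem \ref{T_IntegrabiliteDistributionAssocieeAFaisceauStructuresPQ} guarantee that the \textbf{FSI} property is open under small perturbations (via semi-Fredholm stability or the direct-limit construction), one obtains an open interval $I \ni \lambda_0$ and a neighborhood $U \subset U_0$ of $x_0$ on which each $P_\lambda$, $\lambda \in I$, is also \textbf{FSI} and maximal at every point of $U$, so that $U \subset \Sigma_\lambda$ for all $\lambda \in I$.

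Next I would use the observation made just before Example \ref{Ex_FonctionDeCasimirSurLeDualDUneAlgebreDeLieDeDimensionn}: on the open set $U \subset \Sigma_\lambda$ where $\mathcal{D}_{P_\lambda}$ is a regular closed subbundle of $TM|_U$, its annihilator $\mathcal{D}_{P_\lambda}^{\operatorname{ann}} = \ker P_\lambda$ is a closed subbundle of $T^\flat M|_U$, and the Casimir algebra $\operatorname{Cas}_{P_\lambda}(U)$ is exactly the subalgebra of $f \in \mathfrak{A}(U)$ with $df(x) \in \ker (P_\lambda)_x$ for every $x$. Equivalently, by Theorem \ref{T_FeuilletageSurUneVarieteDeBanachPoissonPartielle}, $\operatorname{Cas}_{P_\lambda}(U)$ consists of the elements of $\mathfrak{A}(U)$ that are constant along the weak symplectic leaves of $P_\lambda$.

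Then I would build a common transversal. Fix, once and for all, a closed submanifold $S \subset U$ through $x_0$ whose tangent space at $x_0$ is a closed complement of $\mathcal{D}_{P_{\lambda_0}}(x_0)$ in $T_{x_0}M$; such an $S$ exists because the characteristic distribution at $\lambda_0$ is \textbf{FSI} and supplemented at $x_0$. Shrinking $I$ and $U$ further by continuity, $S$ remains transverse to $\mathcal{D}_{P_\lambda}$ at every point for all $\lambda \in I$, and the restriction of the leaf projection associated with $\mathcal{D}_{P_\lambda}$ defines a smooth submersion $\pi_\lambda : U \to S$ whose fibers are open pieces of the $P_\lambda$-leaves. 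The extension is then defined by
\[
f_\lambda := (f|_S) \circ \pi_\lambda, \qquad \lambda \in I.
\]
By construction $f_{\lambda_0} = f$ (since $\pi_{\lambda_0}$ is the leaf projection of $P_{\lambda_0}$ and $f$ is already constant on those leaves), and $f_\lambda$ is constant on leaves of $P_\lambda$, so $df_\lambda \in \ker P_\lambda$, which gives $\{f_\lambda, g\}_{P_\lambda} = 0$ for every $g \in \mathfrak{A}(U)$. Checking that $f_\lambda$ lies in $\mathfrak{A}(U)$ reduces to checking that the iterated differentials of $\pi_\lambda$ take values in $T^\flat M$ along the transversal direction, which follows from the fact that $\pi_\lambda$ is obtained by integrating the characteristic distribution of $P_\lambda$ and hence that its co-differential maps $T'S \to \ker P_\lambda \subset T^\flat M$.

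The main obstacle is step three: producing the parameter-dependent leaf projection $\pi_\lambda$ in the $c^\infty$-complete setting, where Frobenius-type integrability and flow-box theorems do not hold automatically. This is precisely why one must restrict to the three hypotheses of Theorem \ref{T_IntegrabiliteDistributionAssocieeAFaisceauStructuresPQ} (Banach Fredholm, Banach semi-Fredholm, or suitable direct limits): in each of these cases the flow of Hamiltonian vector fields exists and depends smoothly on $\lambda$ on a small interval, so that the family of foliations is jointly smooth in $(\lambda,x)$ and the transversal construction goes through.
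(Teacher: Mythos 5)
Your argument follows the paper's own proof in all essentials: both localize in a foliated chart around $x_0$, use the linearity of $\lambda\mapsto P_\lambda$ to keep $U\subset\Sigma_\lambda$ and to keep the $\lambda$-dependent characteristic foliations transverse to a fixed complement, characterize Casimirs by $df\in\ker P_\lambda$, and then transport $f$ from the $P_{\lambda_0}$-foliation to the $P_\lambda$-foliation (the paper via a family of isomorphisms $T_{(x,\lambda)}$ between foliated charts, you via leaf projections onto a common transversal $S$ --- the same construction in different clothing). The technical point you flag at the end (smooth dependence of the foliated charts on $\lambda$) is exactly what the paper supplies, via the smoothness of $(\lambda,x)\mapsto\mathcal{D}_{P_\lambda}(x)$ into the Grassmannian together with trivializability of $E_\lambda$ over a simply connected $U$.
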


\begin{proof} 
Sur chaque composante connexe de $\Sigma_{\lambda}$ la distribution caractéristique $\mathcal{D}_{P_{\lambda}}$  définit un sous fibré intégrable $E_\lambda$ sur  $\Sigma_\lambda$ dont chaque feuille est symplectique. Fixons $x_0\in \Sigma_{\lambda_0}$. Puisque l'application $\lambda \mapsto P_\lambda$ est linéaire, il existe un voisinage   $I$ de $\lambda_0$  tel que $x_0$ appartient à $\Sigma_\lambda$ pour tout $\lambda\in I$. Par ailleurs, pour $\lambda_0$ fixé, il existe une décomposition $\mathbb{M}=\mathbb{E}\times\mathbb{T}$ et une carte feuilletée $(U,\psi)$ autour de $x_0$, c'est-à-dire que  le feuilletage induit sur $U$ est l'image réciproque du feuilletage trivial sur $\mathbb{M}$ dont le feuilles sont du type $\mathbb{E}\times\{t\}$ avec  $t\in \mathbb{T}$. Sans perte de généralité, on peut supposer que  $M=U:= V\times W\subset \mathbb{E}\times\mathbb{T}$,  $x_0=(0,0)\in \mathbb{E}\times \mathbb{T}$ et $U$ est simplement connexe. \\
 
Puisque l'application $\lambda\mapsto P_\lambda$ est linéaire, quitte à restreindre $U$ et $I$, il en résulte que $(\lambda, x)\mapsto \mathcal{D}_{P_\lambda}(x)$ est une application lisse $I\times U$ dans 
la grassmannienne des sous espaces de $\mathbb{M}$ isomorphes à $\mathbb{E}$ et transverses à $\{0\}\times \mathbb{T}$. Cela implique que  pour tout  $\lambda\in I$, l'ouvert $U$ est contenu dans  $\Sigma_\lambda$.  Comme $U$ est simplement connexe, le fibré $E_\lambda$ est trivialisable et, quitte à restreindre $U$, il existe une carte feuilletée $(U,\Psi_\lambda)$ telle que si $
\mathbb{E}_\lambda$ est le sous espace $\Psi_\lambda(0)$ alors le feuilletage symplectique associé à $E_\lambda$ sur $U$ est l'image inverse du feuilletage $\{\mathbb{E}_\lambda\times\{t\}\}_{t\in \mathbb{T}}$. D'autre part, pour chaque $x=(u,t)\in V\times W$, il existe une famille d'isomorphismes linéaires $T_{(x,\lambda)}$ 
 qui fixent $\mathbb{T}$  et tels 
que $T_{(x,\lambda)}(\mathbb{E})=\mathbb{E}_\lambda$. Dans ces mêmes conditions, si $\mathbb{M}^\flat$ est la fibre type de $T^\flat M$,  la restriction de  $T^\flat M$ à $U$ est $U\times 
\mathbb{M}^\flat\subset U\times (\mathbb{E}_\lambda^{\operatorname{ann}}\times \mathbb{T}^{\operatorname{ann}})$. Comme $T_{(x,\lambda)}(\mathbb{E})=\mathbb{E}_\lambda$, on a 
\begin{equation}
\label{Eq_PlambdaP}
P_\lambda(x)
=
T_{(x,\lambda)}\circ P_{\lambda_0}\circ T_{(x,\lambda)}^\star.
\end{equation}
Rappelons qu'une fonction  admissible $f$ est une  fonction de Casimir pour $P_\lambda$  si et seulement si sa différentielle appartient à $\ker P_\lambda(x)$ (cf. $\S$~\ref{____FonctionsDeCasimirPourVarieteDePoissonPartielle}). Par suite de (\ref{Eq_PlambdaP}), il résulte que si $d_xf$  appartient à $\ker P_{\lambda_0}(x)$ alors 
$d \left( f\circ T_{(x,\lambda)}^{-1} \right) $ appartient à $\ker P_\lambda(x)$, ce qui achève la démonstration en prenant 
$f_\lambda= f\circ T_{(x,\lambda)}^{-1} $.
\end{proof}

\emph{Envisageons le principe de construction d'une chaîne de récursion de Magri-Lenard}. \\
On considère, si elle existe, une fonction de Casimir $H_\lambda$ associé à l'ancre $P_\lambda$.
On a donc, pour toute fonction $f \in\mathfrak{A} $, $\{ H_\lambda,f \}=0$.\\
Supposons que $H_\lambda$ puisse être écrit sous forme d'une série
\[
H_\lambda = H_0 + H_1 \lambda + H_2 \lambda^2 + \cdots
\]
En utilisant les même arguments qu'en dimension finie, on peut vérifier que $H_0$ est une fonction de Casimir de $P$ et que, pour tout entier naturel $k$, le champ de vecteurs Hamiltonien associé à $H_{k+1}$ relativement à $P$ coïncide avec le champ Hamiltonien associé à $H_k$ relativement à $Q$.\\
De plus, tous les Hamiltoniens $H_k$ sont en involution pour les crochets associés à $P$ et $Q$ et les champs de vecteurs Hamiltoniens associés commutent deux à deux. 

\begin{example}
On reprend les éléments de l'Exemple \ref{Ex_CrochetsCompatiblesSurDualAlgebreLieBanachNonNecessairementReflexive}. 
On considère alors pour $P$ le tenseur de Poisson constant associé au crochet $\{.,.\}_{m_0}$ et pour $Q$ le tenseur de Lie-Poisson associé au crochet $\{.,.\}_{\operatorname{LP}}$.\\
Pour obtenir la fonction de Casimir $H_\lambda$, on choisit une fonction de Casimir $H$ de la structure de Lie-Poisson et on utilise la méthode de translation de l'argument (cf. \cite{Kole07}, 4.5)
\[
H_\lambda (m) = H \left( m_0 + \lambda m \right).
\]
Cette technique peut être appliquée à l'équation de KdV vue comme système Hamiltonien sur le dual de l'algèbre de Virasoro (cf. \cite{KhMi03}).
\end{example} 

\subsubsection{Fonctions de Casimir et feuilletages pour l'équation de KdV}
\label{___Ex_FonctionDeCasimirFeuilletagesPourEquationKdVSurTourDeHilbert}
Dans \cite{KaMa98}, Kappeler et Makarov font apparaître l'équation de KdV sur le cercle $\mathbb{S}^1$ comme un système bihamiltonien sur la tour de Hilbert d'espaces de Sobolev $ \left( H_n \left( \mathbb{S}^1 \right)  \right) $ où 
\[
H^{n} \left( \mathbb{S}^{1} \right) 
=
\left\{ 
q\in L^{2} \left( \mathbb{S}^{1} \right):\;
\forall k\in \left\{ 0,\dots,n\right\},
q^k\in L^{2} \left( \mathbb{S}^{1} \right)  
\right\}
\]
L'opérateur
\[
\begin{array}
[c]{cccc}
\partial_{x}: & U\cap H_{n} & \longrightarrow & \mathcal{L}\left(
H_{n+1},H_{n}\right)  \\
& q & \longmapsto & \left(  \partial_{x}\right)  _{q}%
\end{array}
\]
où $U = H_{0} = H^{0} \left( \mathbb{S}^{1} \right) $ correspond au premier tenseur de Poisson
\footnote{
L'équation de KdV\index{KdV equation ($u_t=-u_{xxx}+6u u_{xx}$)}
\index{utKdV@$u_t=-u_{xxx}+6u u_x$ (équation de KdV)}\index{equation!KdV ($u_t=-u_{xxx}+6u u_{xx}$)} 
\[
\partial_{t}u=-\partial_{x}^{3}u+6u\partial_{x}u
\]
apparaît comme un \emph{système complètement intégrable} au sens suivant~: il existe une suite de variables action-angle\index{variables action-angle} et le système peut être linéarisé (cf. \cite{Zub97} et \cite{KoKu16})~: 
\[
\left\{
\begin{array}
[c]{rcl}
A_1		&=&		0	\\
I_t     &=&     0   \\
\phi_t  &=&     \dfrac{\partial}{\partial I} K(I)
\end{array}
\right.
\]
où $K$ correspond à l'Hamiltonien.\\
Le lecteur trouvera dans \cite{LMV11} un travail sur le théorème action-angle dans le contexte des systèmes intégrables sur des variétés de Poisson de dimension finie.
}
 qui admet pour fonction de Casimir la moyenne 
\[
 A_1 : u \mapsto \int_{\mathbb{S}^1} u(x) dx
 \]
Les feuilles symplectiques pour le feuilletage correspondant  sont alors données par les espaces affines
\[
H_c^n \left( \mathbb{S}^1 \right) 
=
\{
q \in H^n \left( \mathbb{S}^1 \right):\; A_n(q)=c
\}
 \]
D'autre part, l'opérateur
\[
\begin{array}
[c]{cccc}
L: & U \cap H_{n} 	& \longrightarrow & 
			\mathcal{L} \left( H_{n+2},H_{n-1} \right)  \\
		& q 			& \longmapsto &
			 L_q
\end{array}
\]
où 
\[
L_q = -\dfrac{1}{2}\partial_x^3 + q\partial_x+\partial_x q
\]
correspond au second tenseur de Poisson pour cette EDP.\\
Pour déterminer une fonction de Casimir pour ce second tenseur, on considère l'équation de Schrödinger
\[
-y''  + qy = \lambda y
\]
et l'on note, par abus, $y_1(x,\lambda,q)$ et $y_2(x,\lambda,q)$ les solutions fondamentales de cette équation.\\
Le discriminant correspondant est alors
\[
\Delta(\lambda,q) = y_1(1,\lambda,q) + y_2'(1,\lambda,q)
\]
$A_2:q \mapsto \Delta(0,q)$ est alors une fonction de Casimir pour cette seconde structure de Poisson.\\
Le feuilletage associé n'est pas régulier et admet des feuilles singulières qui peuvent être décrites à partir du spectre de l'opérateur $-d_x^2 + q$ (cf. \cite{KaMa98}, Theorem 0.1).

\subsection{Variétés $\operatorname{PN}$ partielles}
\label{___VarietesPNPartielles}

La notion de variété $\operatorname{PN}$ introduite par Y. Kosmann-Schwarzbach et  F. Magri dans \cite{KSMag90} constitue le cadre naturel pour les systèmes Hamiltoniens intégrables, notamment sur les duaux d'algèbre de Lie. Ces systèmes peuvent être décrits \textit{via} un champs de vecteurs $X$ laissant invariant à la fois $P$ et $N$~:
\[
L_X P = 0	\textrm{  et  }		L_X N = 0
\] 

\subsubsection{Tenseur de Nijenhuis}
\label{____TenseurDeNijenhuis}

La torsion d'un tenseur $T(N)$ d'un tenseur $N : \mathfrak{X}(M) \to \mathfrak{X}(M)$ de type $(1,1)$ sur la variété adaptée $M$ est défini, comme dans \cite{MaMo84}, (2.6), pour tout couple $(X,Y)$ par
\begin{equation}
\label{eq_Torsion1-1Tenseur}
T(N)(X,Y) = [NX,NY] - N \left( [NX,Y]+[X,NY]-N[X,Y] \right)
\end{equation} 
Le tenseur $N$ de type $(1,1)$ est appelé \emph{tenseur de Nijenhuis}\index{tenseur de Nijenhuis}\index{Nijenhuis!tenseur} si $T(N)=0$.

\subsubsection{Variétés $\operatorname{PN}$ partielles adaptées}
\label{____VarietesPNPartiellesAdaptees}
 
\begin{definition}
\label{D_VarietePNPartielle}
Une variété $\operatorname{PN}$ partielle adaptée\index{variété!$\operatorname{PN}$ partielle} est une variété de Poisson partielle $ \left( M,\mathfrak{A}(M),\{.,.\}_{P} \right) $ muni d'un tenseur de Nijenhuis $N$ vérifiant les conditions
\begin{description}
\item[\textbf{(pPN~1)}]
$N \circ P = P \circ N^t$
\item[\textbf{(pPN~2)}]
$R(P,N) = 0$
\end{description}
où le concomitant $R(P,N)$ est défini pour toute section $\alpha \in \Gamma \left( T^{\flat}M \right) $ et tout champ de vecteurs $X \in \operatorname{im} P$ par
\footnote{\cite{MaMo84}, (B.3.5) ou \cite{MMR85}, (2.5).}
\begin{equation}
\label{eq_pR(P,N)}
R(P,N)(\alpha,X) 
= L_{P(\alpha)}(N)X 
- P \left( L_X \left( N^t \alpha\right) \right) 
+ P \left( L_{NX} \alpha \right) .
\end{equation}
\end{definition}

Si $M$ est une variété $\operatorname{PN}$ alors $P_1=NP$ est un tenseur de Poisson couplé à $N$. En itérant le processus on obtient une suite $ \left( P_k \right) _{k \in \mathbb{N}^\star}$ de tenseurs de Poisson (où $P_k = N^k P$) en involution, i.e. $\left[ P_i,P_j \right] = 0$ en adaptant le résultat \cite{MaMo84}, B.3, iii). 

\subsubsection{Exemples}
\label{____Ex_VarietesPNPartielles}

\begin{example}
\label{Ex_VarietesPN}
{\sf Variétés $\operatorname{PN}$.}\\
Les variétés $\operatorname{PN}$ de dimension finie ou encore modelées sur des espaces de Banach sont bien entendu des variétés $\operatorname{PN}$ partielles. 
\end{example}

\begin{example}
\label{Ex_StructurePNPartielleInvarianteAGaucheSurUnGroupeDeLie}
{\sf Variétés $\operatorname{PN}$ partielles invariantes à droite sur un groupe de Lie.}\\
La notion de structure $\operatorname{PN}$ invariante à droite sur un groupe de Lie $G$ modelé sur une structure de Banach est défini de manière naturelle (cf. \cite{RRH18}). Une structure $\operatorname{PN}$ partielle est obtenue en restreignant la fibre $\mathfrak{g}^\star $ du fibré cotangent $T^\star  G$ à un sous-espace vectoriel de $\mathfrak{g}^\star $.  
\end{example}

\subsubsection{Processus de restriction}
\label{___ProcessusDeRestrictionDUneVarietePNPartielle}
On adapte le processus de restriction développé en \ref{____ProcessusDeRestrictionVarietesPoissonPartielles}
pour les variétés de Poisson partielles aux variétés $\operatorname{PN}$ partielles tout en conservant les notations introduites.

\begin{theorem}
\label{T_RestrictionVarietePoissonPartielleASousVarieteImmergee}
Soit $S$ une sous-variété immergée de la variété $\operatorname{PN}$ partielle $M$ où $S$ est paramétrée par $( \check{M}, \imath : \check{M} \to M )$.\\
Supposons que soient vérifiées les relations \emph{\textbf{(RpP~1)}} et \emph{\textbf{(RpP~2)}} ainsi que
\begin{description}
\item[\textbf{(RpN)}]
{\hfil
$N \left( \mathfrak{X}(S) \right) \subset \mathfrak{X}(S)$
}
\end{description}
$S$ hérite alors d'une structure de variété $\operatorname{PN}$ partielle pour les tenseurs
\[
\check{P} = d\imath^{-1} \circ P \circ \delta \imath|_{\mathfrak{X}^\flat_P(S)}^{-1}
\]
et
\[
\check{N} = d\imath^{-1} \circ N \circ d\imath
\]
\end{theorem}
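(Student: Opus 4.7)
La strat�gie consiste � combiner le Th�or�me \ref{T_RestrictionVarietePoissonPartielleASousVarieteImmergee} pr�c�dent (restriction pour les vari�t�s de Poisson partielles) avec des v�rifications directes pour le tenseur $\check{N}$ et les conditions de compatibilit� d�finissant une vari�t� $\operatorname{PN}$ partielle. Les hypoth�ses \textbf{(RpP~1)} et \textbf{(RpP~2)} assurent, par ce th�or�me, que $\check{P}$ est bien d�fini et munit $\check{M}$ d'une structure de Poisson partielle. Il reste donc essentiellement � �tablir que $\check{N}$ est un tenseur de Nijenhuis bien d�fini sur $\check{M}$ et que les relations \textbf{(pPN~1)} et \textbf{(pPN~2)} sont h�rit�es.

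Pour justifier que $\check{N}$ est bien d�fini, je noterais que pour tout $\check{X}\in \mathfrak{X}(\check{M})$, on a $d\imath(\check{X}) \in \mathfrak{X}(S)$ par d�finition de la param�trisation, donc par \textbf{(RpN)} le champ $N(d\imath(\check{X}))$ appartient aussi � $\mathfrak{X}(S)$, ce qui permet, $d\imath$ �tant injective d'image $\mathfrak{X}(S)$, de poser $\check{N}(\check{X}) := d\imath^{-1}(N(d\imath(\check{X})))$. L'adjoint $\check{N}^t$ est alors caract�ris� par la relation $\check{N}^t \circ \delta\imath = \delta\imath \circ N^t$, laquelle est bien d�finie sur $\mathfrak{X}^\flat_P(S)$ car \textbf{(RpN)} entra�ne que $N^t$ pr�serve l'annulateur partiel $\mathfrak{X}^\flat(S)^{\operatorname{ann}} = \ker \delta\imath$: en effet, pour $\alpha \in \mathfrak{X}^\flat(S)^{\operatorname{ann}}$ et $X\in \mathfrak{X}(S)$, on a $\langle N^t\alpha, X\rangle = \langle \alpha, NX\rangle = 0$ puisque $NX\in \mathfrak{X}(S)$.

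La propri�t� de Nijenhuis $T(\check{N}) = 0$ d�coulerait alors de $T(N) = 0$: pour $\check{X}, \check{Y} \in \mathfrak{X}(\check{M})$, l'application $d\imath$ entrelace les crochets de Lie, i.e. $[d\imath(\check{X}), d\imath(\check{Y})] = d\imath([\check{X}, \check{Y}])$ et de m�me $[Nd\imath(\check{X}), Nd\imath(\check{Y})] = d\imath([\check{N}\check{X}, \check{N}\check{Y}])$ puisque tous ces champs restent dans $\mathfrak{X}(S)$ par \textbf{(RpN)}, d'o� $T(\check{N})(\check{X}, \check{Y}) = d\imath^{-1}(T(N)(d\imath(\check{X}), d\imath(\check{Y}))) = 0$. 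La v�rification de \textbf{(pPN~1)}, c'est-�-dire $\check{N}\check{P} = \check{P}\check{N}^t$, d�coule alors directement de $NP = PN^t$ et des relations caract�risant $\check{N}$, $\check{N}^t$ et $\check{P}$ par compos�es avec $d\imath$ et $\delta\imath|_{\mathfrak{X}^\flat_P(S)}^{-1}$.

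L'obstacle principal r�sidera dans l'�tablissement de \textbf{(pPN~2)}, soit $R(\check{P}, \check{N}) = 0$, car le concomitant fait intervenir des d�riv�es de Lie qui ne se transportent pas imm�diatement par $\imath$. L'id�e sera, pour $\check{\alpha} \in \mathfrak{X}^\flat_{\check{P}}(\check{M})$ et $\check{X}\in \operatorname{im} \check{P}$, de choisir des rel�vements $\alpha\in \mathfrak{X}^\flat_P(S)$ avec $\delta\imath(\alpha) = \check{\alpha}$ et $X = d\imath(\check{X})\in \mathfrak{X}(S)\cap \operatorname{im} P$, puis d'utiliser les identit�s entre d�riv�es de Lie et les compatibilit�s fournies par \textbf{(RpP~1)}, \textbf{(RpP~2)} et \textbf{(RpN)} pour montrer terme � terme que
\[
R(\check{P}, \check{N})(\check{\alpha}, \check{X}) = d\imath^{-1}\bigl(R(P, N)(\alpha, X)\bigr)\circ \imath,
\]
expression qui s'annule par l'hypoth�se $R(P,N) = 0$. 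Le point d�licat sera de v�rifier l'ind�pendance du r�sultat par rapport aux rel�vements choisis, ind�pendance qui d�coulera de \textbf{(RpP~2)} combin�e � \textbf{(RpN)}.
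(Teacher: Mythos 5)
Votre d�marche est correcte et co�ncide pour l'essentiel avec celle du papier, qui se contente d'adapter le th�or�me de restriction des vari�t�s de Poisson partielles et renvoie en note � \cite{MaMo84}, (6.13), (6.15) et (6.16) pour la v�rification que $\check{N}$ est de Nijenhuis et que $\check{P}$ et $\check{N}$ sont compatibles. Vos v�rifications explicites (bonne d�finition de $\check{N}$ et de $\check{N}^t$ via \textbf{(RpN)}, transport de la torsion par $d\imath$, h�ritage de \textbf{(pPN~1)} gr�ce � \textbf{(RpP~2)}) sont pr�cis�ment les points que le papier d�l�gue � cette r�f�rence.
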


On vérifie en sus que $\check{N}$ est un tenseur de Nijenhuis 
\footnote{\cite{MaMo84}, (6.13).}
et que $\check{P}$ et $\check{N}$ sont compatibles
\footnote{\cite{MaMo84}, (6.15) et (6.16).}.

\subsubsection{Processus de projection}
\label{___ProcessusProjectionDUneVarietePNPartiellee}

On adapte ici le processus de projection développé en \ref{____ProcessusDeProjectionVarietesPoissonPartielles}
pour les variétés de Poisson partielles aux variétés $\operatorname{PN}$ partielles tout en conservant les notations introduites.

\begin{theorem}
\label{T_ProjectionVarietePNPartielleLeLongDistributionIntegrable}
Soit $\mathcal{D}$ une distribution intégrable d'une variété $\operatorname{PN}$ partielle $M$. Supposons que les feuilles soient connexes et que l'espace quotient $M/\mathcal{D}$ soit une variété quotient $\check{M}$ où la projection canonique $\jmath : M \to \check{M}$ est une submersion surjective. Si la propriété \emph{\textbf{(PpP)}} est vérifiée et si, de plus, on a
\begin{description}
\item[\textbf{(PpN)}]
{\hfil
$ N^\star  \left( \mathfrak{X}^\star _D(M) \right) 
\subset \mathfrak{X}^\star _D(M)$
}
\end{description}
alors $\check{M}$ hérite de $M$ d'une structure de variété $\operatorname{PN}$ partielle  définie par les tenseurs
\[
\check{P} = d\jmath \circ P \circ \delta \jmath\]
et
\[
\check{N}^\star  = d\jmath^{-1} \circ N^\star  \circ d\jmath
\]
\end{theorem}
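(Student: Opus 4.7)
The plan is to reduce the theorem to two independent tasks: the Poisson part and the Nijenhuis part, using the already-established Theorem~\ref{T_ProjectionVarietePNLeLongDistributionIntegrable} for the first, and then treating the tensor $N$ by duality. First I would apply the projection theorem for partial Poisson manifolds directly to obtain $\check{P}=d\jmath\circ P\circ\delta\jmath$ on $\check{M}$ under hypothesis \textbf{(PpP)}; this immediately provides the partial Poisson structure on the quotient, so nothing more needs to be checked for the $P$-component.

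Next I would give meaning to $\check{N}$. The hypothesis \textbf{(PpN)} says that $N^\star$ preserves the space of projectable $1$-forms, and dually (using that the Lie bracket of projectable vector fields is projectable and that the annihilator of projectable forms in the fibres of $\mathcal{D}$ is intrinsic) this translates into the statement that $N$ preserves the space $\mathfrak{X}_{\mathcal{D}}(M)$ of projectable vector fields. Consequently, for a projectable $X$ the vector field $NX$ is again projectable and descends to a well-defined $\check{N}\check{X}$ on $\check{M}$; this defines the $(1,1)$-tensor $\check{N}$, and its transpose is exactly the formula $\check{N}^\star=d\jmath^{-1}\circ N^\star\circ d\jmath$ given in the statement. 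Well-definedness does not depend on the choice of projectable representative because two such representatives differ by a vector field tangent to $\mathcal{D}$, and the image under $N$ of such a field is again tangent to $\mathcal{D}$ (as follows from \textbf{(PpN)} by dualising).

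I would then verify successively: (i) $\check{N}$ is a Nijenhuis tensor, i.e. its torsion vanishes; (ii) $\check{N}\circ\check{P}=\check{P}\circ\check{N}^t$; and (iii) $R(\check{P},\check{N})=0$. All three are obtained by evaluating on projectable vector fields and projectable $1$-forms: the torsion formula~\eqref{eq_Torsion1-1Tenseur} for $\check{N}$ is the $\jmath$-projection of the torsion of $N$ applied to projectable vector fields, and the Lie brackets involved stay projectable, so the equality $T(\check{N})=0$ follows from $T(N)=0$. Condition (ii) is obtained by pulling back via $\jmath^\star$ and using \textbf{(pPN~1)} for $(P,N)$. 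For (iii), one writes $R(\check{P},\check{N})$ on pairs $(\check{\alpha},\check{X})$ with $\check{X}\in\operatorname{im}\check{P}$, lifts to $(\alpha,X)$ with $\alpha\in\mathfrak{X}^\star_{\mathcal{D}}(M)$ and $X\in P(\mathfrak{X}^\star_{\mathcal{D}}(M))\subset\mathfrak{X}_{\mathcal{D}}(M)$ (here \textbf{(PpP)} is used to ensure the lift $X$ is itself projectable), and observes that the three terms of~\eqref{eq_pR(P,N)} are projectable under \textbf{(PpN)} and project to the corresponding three terms for $\check{P},\check{N}$.

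The main obstacle will be checking (iii): one must be careful that every Lie derivative $L_{P(\alpha)}(N)$, $L_X(N^t\alpha)$ and $L_{NX}\alpha$ appearing in~\eqref{eq_pR(P,N)} genuinely produces a projectable object, and that the quotient $L_{\check{P}(\check\alpha)}\check{N}$ etc.\ really equals the $\jmath$-projection of the corresponding object upstairs. This is a purely formal verification once \textbf{(PpP)} and \textbf{(PpN)} are in place, but it relies on the fact, standard for projectable fields, that $\jmath_\ast[X,Y]=[\jmath_\ast X,\jmath_\ast Y]$ and $\jmath^\star(L_Y\alpha)=L_{\jmath_\ast Y}(\jmath^\star\alpha)$ whenever everything is projectable; I would state these compatibility lemmas at the beginning of the proof and invoke them repeatedly to avoid cluttering the computation.
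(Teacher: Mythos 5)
Your plan is sound and follows essentially the route the paper intends: the paper states this theorem without giving an explicit proof, merely indicating that one adapts the projection process of \cite{MaMo84} (Lemma~7.1) and Theorem~\ref{T_ProjectionVarietePNLeLongDistributionIntegrable} to the $\operatorname{PN}$ setting, which is exactly the decomposition you carry out (Poisson part via \textbf{(PpP)}, then $N$ by duality via \textbf{(PpN)}, then the three compatibility checks). The only delicate point, the dualisation of \textbf{(PpN)} to conclude that $N$ preserves projectable vector fields and maps $\mathcal{D}$ into $\mathcal{D}$, is handled correctly since the projectable $1$-forms at a point have common kernel exactly $\ker T_x\jmath=\mathcal{D}_x$.
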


\subsection{Variétés $\operatorname{P\Omega}$ partielles}
\label{__VarietesPOmegaPartielles}

Adaptant la définition de tenseur présymplectique donnée dans  \cite{MaMo84},~2 à notre contexte $c^\infty$-complet et voyant, par abus, un tenseur $2$ fois covariant\footnote{
$ \omega(X,Y) = \left\langle \Omega X,Y \right\rangle $ où $\omega : \mathfrak{X}(M) \times \mathfrak{X}(M) \to C^\infty(M)$.
} comme un morphisme de $TM$ dans $T'M$ on introduit~:
\begin{definition}
\label{D_FormeSymplectiqueFaible}
Un tenseur $\Omega : TM \to T'M$ de type $(0,2)$  antisymétrique injectif
\footnote{Classiquement, $\Omega : TM \to T'M$ est antisymétrique si $ \Omega + \Omega^\star = 0$.}  est appelé \emph{forme symplectique faible}\index{forme symplectique faible} si $d\Omega=0$ où pour tout couple de champ de vecteurs $(X,Y)$, on a
\[
d\Omega(X,Y)
=
L_X(\Omega)Y - L_Y(\Omega)X - \Omega([X,Y]) + d \left\langle \Omega X, Y \right\rangle
\]
\end{definition}

\begin{definition}
\label{D_AncresPoissonPartiellesCompatibles}
Soit $p^{\flat}:T^{\flat}M\to M$ un sous-fibré faible de $p_{M}^{\prime}:T^{\prime}M\to M$.\\
Soient $P : T^{\flat}M \to TM$ une ancre de Poisson partielle et $\Omega : TM \to T'M$ une forme symplectique partielle à valeurs dans $T^{\flat}M$. 
$M$ est dite \emph{variété $\operatorname{P\Omega}$ partielle}\index{variété!$\operatorname{P\Omega}$ partielle} si l'on a la condition de compatibilité
\begin{eqnarray}
\label{eq_dOmegaPOmegaNul}
d(\Omega P \Omega) =0
\end{eqnarray}
\end{definition}

\subsubsection{Exemples}
\label{Ex_VarietesPOmegaPartielles}

\begin{example}
{\sf Structure $\operatorname{P\Omega}$ sur un groupe de Lie admettant un tenseur de Poisson invariant à droite et une forme faiblement symplectique invariante à gauche.}\\ 
Soit $G$ un groupe de Banach-Lie de neutre $e$~; on désigne par $\mathfrak{g}$ son algèbre de Lie et par $\mathfrak{g}^\star$ le dual de $\mathfrak{g}$.\\
Pour $g \in G$, $L_g : h \mapsto g.h$ (resp. $R_g : h \mapsto h.g$) désigne la translation à gauche (resp. à droite).\\
Toute application linéaire $\omega : \mathfrak{g} \to \mathfrak{g}^\star$ injective vérifiant, pour tout triplet $(X,Y,Z)$ d'éléments de l'algèbre de Lie $\mathfrak{g}$, les conditions
\begin{enumerate}
\item[\emph{\textbf{(AS)}}] \emph{[Antisymétrie]}
\[
\langle \omega X,Y \rangle
= - \langle \omega Y,X \rangle
\]
\item[\emph{\textbf{(CS)}}] \emph{[Cocycle symplectique]}
\[
\langle \omega [X,Y],Z \rangle
+ \langle \omega [Y,Z],X \rangle 
+ \langle \omega [Z,X],Y \rangle
=0 
\]
\end{enumerate}
définit sur $G$ un tenseur faiblement symplectique 
$\Omega : \mathfrak{X}(G) \to \mathfrak{X}^\star (G)$ invariant à gauche.\\
De manière analogue, on définit un tenseur de Poisson $P$ invariant à droite à partir d'un cocycle de Poisson\footnote{$P_e$ vérifie les propriétés (12.32) et (12.33) dans \cite{MaMo84}.} $P_e$.\\

$G$ est alors muni d'une structure  $\operatorname{P\Omega}$ (\cite{MaMo84}, 12).
\end{example}

\subsubsection{Opérateur de récursion}
\label{___OperateurDeRecursion}

Dans le cadre bihamiltonien, la notion d'opérateur de récursion est intimement lié aux deux structures de Poisson compatibles dont l'une est inversible. Un des intérêts majeur de cette opérateur est qu'il permet d'obtenir des intégrales du mouvement pour le système dynamique associé.\\
Ce processus bien connu en dimension finie peut être étendu au cadre d'espaces de Hilbert séparables où la notion de système Hamiltonien complètement intégrable est associée à la recherche de variables action-angle dans lequel le système peut être linéarisé (e.g. l'équation de Korteveg-de Vries apparaît comme un système intégrable sur une tour d'espaces de Hilbert comme décrit dans \cite{KoKu16}).\\
On trouvera dans l'ouvrage de référence \cite{MaMo84} des exemples d'opérateurs de récursion dans le cadre de structures de Banach. S'agissant de l'existence de variables action-angle, on se réfèrera à \ref{___Structure de Poisson partielles polarisées}.\\
Il est à noter que ces opérateurs de récursion apparaissent aussi dans le cadre plus général des variétés de Fréchet comme indiqué dans \cite{MMR85}. 

\begin{definition}
\label{D_OperateurRecursionPourVarietesPOmegaPartielles}
Soit une variété $\operatorname{P\Omega}$ partielle. On appelle \emph{opérateur de récursion}\index{opérateur de récursion} le morphisme $N = P \circ \Omega : TM \to TM$.
\end{definition}

Pour une variété P$\Omega$ partielle, l'opérateur de récursion $N$ est un \emph{tenseur de Nijenhuis}\index{tenseur de Nijenhuis}, i.e. vérifie la condition\index{TN@$T(N)$ (torsion du tenseur $N$)}
\[
T(N) = 0
\]
où la \emph{torsion}\index{torsion} du tenseur $N$ est donnée, pour tout couple de champs de vecteurs $(X,Y)$ par
\begin{eqnarray}
\label{eq_TorsionTenseurN}
T(N)(X,Y)
=
[NX,NY] - N \left( [NX,Y] + [X,NY] - N[X,Y] \right)
\end{eqnarray}
Cette relation
\footnote{cf. \cite{MaMo84}, (B.2.3)} 
est alors équivalente à
\begin{eqnarray}
\label{eq_TorsionTenseurNDeriveeDeLie}
L_{NX}(N) - N.L_X(N) = 0
\end{eqnarray}




\printindex

\begin{minipage}[t]{10cm}
\begin{flushleft}
\small{
\textsc{Patrick Cabau}
\\*e-mail: patrickcabau@gmail.com
\\[0.4cm]

\textsc{Fernand Pelletier}
\\UMR 5127 CNRS, Universit\'e de Savoie Mont Blanc, LAMA
\\Campus Scientifique
Le Bourget-du-Lac, 73370, France
\\*e-mail: fernand.pelletier@univ-smb.fr
\\[0.4cm]
}
\end{flushleft}
\end{minipage}

\end{document}